\documentclass[10pt,leqno,dvipdfmx]{amsart}
\usepackage{graphicx}
\usepackage{indentfirst,csquotes}

\usepackage{amssymb,amsthm,amsmath,amsfonts,mathrsfs}
\usepackage{xcolor,paralist,hyperref,fancyhdr,etoolbox}
\usepackage{amscd}
\usepackage{tikz-cd}
\newtheorem{theorem}{Theorem}[section]
\newtheorem{definition}[theorem]{Definition}

\newtheorem{lemma}[theorem]{Lemma}
\newtheorem{proposition}[theorem]{Proposition}
\newtheorem{corollary}[theorem]{Corollary}

\hypersetup{ colorlinks=true, linkcolor=black, filecolor=black, urlcolor=black }

\usepackage{lipsum}

\begin{document}
\title[Structure of derived Hecke algebras with coefficients in profinite rings]{Structure of derived Hecke algebras with coefficients in torsion-free profinite rings}
\author[Shuta Kataoka]{Shuta Kataoka}
\date{\today}
\address{}
\email{math.shutakataoka@gmail.com}
\maketitle
	
\let\thefootnote\relax
	
\begin{abstract}
	In this article,
	we study derived Hecke algebras with coefficients in torsion-free profinite rings regarded as discrete rings.
	We compare each such algebra with the degreewise inverse limit of the derived Hecke algebras with coefficients in finite rings 
	and with its subalgebra consisting of elements with uniformly finite double-coset support.
	Under cohomological comparison hypotheses,
	the natural map to this subalgebra has image given by its degree-zero part
	and its positive-degree torsion.
	We describe the kernel in terms of continuous cohomology and prove that it is a divisible square-zero ideal annihilated by every positive-degree element.
	These comparisons are compatible with Yoneda products,
	and the action on torsion Ext classes factors through the image.
	We compute the resulting algebras for the additive group $\mathbb{Q}_p$ and determine the underlying graded abelian groups for $(\operatorname{GL}_2(\mathbb{Q}_p),\operatorname{GL}_2(\mathbb{Z}_p))$ with coefficients in $\mathbb{Z}_p$,
	for $p>3$.
	The examples exhibit distinct effects of the coefficient topology and the support condition.
\end{abstract}
	
\bigskip
	
$\,$

\section{Introduction}

Let $R$ be a commutative ring,
let $G$ be a locally profinite group,
and let $K$ be an open compact subgroup of $G$.
We give $R$ the trivial $G$-action.
The category $\mathrm{SM}_R(G)$ of smooth $R[G]$-modules is a Grothendieck category and hence has enough injectives.
We write
\[
	\mathscr H(G,K)_R
		:=\bigoplus_{i\geqq 0}\mathscr H^i(G,K)_R
		:=\bigoplus_{i\geqq 0}
		\operatorname{Ext}^i_{\mathrm{SM}_R(G)}
		(R[G/K],R[G/K])
\]
and call this the derived Hecke algebra with coefficients in $R$
\cite[Definition~2.2]{Venkatesh}.
Its multiplication is given by the Yoneda product,
which we denote by $*$.

In the local setting of \cite[Sections~2.1--2.3]{Venkatesh},
this product is described by a function model for finite coefficient rings in which the residue characteristic of the local field is invertible.
For arbitrary locally profinite groups and commutative coefficient rings,
a group-cohomological formula for the opposite of the Yoneda product is given in \cite[Proposition~2.10]{koziol2024parahoricheckeextalgebrascharacteristic}.
Venkatesh also defines a global derived Hecke algebra with coefficients in $\mathbb Z_l$ using compatible systems of operators on arithmetic cohomology with finite coefficients \cite[Section~2.13]{Venkatesh}.

In this paper,
rather than passing to the image of an action on arithmetic cohomology,
we study the local derived Hecke algebra itself.
We investigate its structure with coefficients in torsion-free profinite rings regarded as discrete rings,
and compare it with inverse limits of derived Hecke algebras with finite coefficients.

The Ext description gives an isomorphism of graded $R$-modules
\[
	\mathscr H(G,K)_R
		\cong \bigoplus_{x\in K\backslash G/K}H^*(K_x,R^d),
\]
where $R^d$ denotes $R$ with the discrete topology,
$g_x\in G$ is a representative of $x\in K\backslash G/K$,
and
\[
	K_x:=K\cap g_xKg_x^{-1}.
\]
This reduces the study of the underlying graded module to continuous cohomology of profinite groups.
All group cohomology considered below is continuous,
and all coefficient actions in the comparison statements are trivial.

Let $A$ be a torsion-free profinite abelian group with trivial $K$-action,
and fix a presentation
\[
	A\cong\varprojlim_{i\in I}A_i
\]
as an inverse limit of finite abelian groups.
Write $A^d$ and $A^p$ for $A$ with the discrete and profinite topologies,
respectively.
Whenever rationalization is considered,
we assume that $A\otimes_{\mathbb Z}\mathbb Q$ is equipped with a topological $\mathbb Q$-module structure,
with $\mathbb Q$ discrete,
for which the natural map
\[
	A^p\hookrightarrow A\otimes_{\mathbb Z}\mathbb Q
\]
is a topological embedding with open image.
We denote the resulting topological module by $(A\otimes_{\mathbb Z}\mathbb Q)^p$.

The identity map $f:A^d\rightarrow A^p$ is continuous and induces
\[
	H^*(f):H^*(K,A^d)\rightarrow H^*(K,A^p).
\]
For every $n\geqq 0$, let
\[
	\lambda^n:H^n(K,A^p)\rightarrow\varprojlim_iH^n(K,A_i)
\]
be the canonical comparison map,
and let
\[
	\eta^n:H^n(K,A^d)\rightarrow\varprojlim_iH^n(K,A_i)
\]
be the map induced by the coefficient maps $A^d\to A_i$.
We obtain the following comparison.

\begin{proposition}
	[Propositions~\ref{proposition1-1} and~\ref{proposition1-2}]
	For every $n\geqq 0$, we have
	\[
		\eta^n=\lambda^n\circ H^n(f).
	\]
	In particular,
	whenever $\lambda^n$ is an isomorphism,
	the map induced by the identity $A^d\to A^p$ agrees,
	under this identification,
	with the map obtained from the universal property of the inverse limit.
	
	Suppose moreover that,
	for every $n\geqq 2$,
	the natural map
	\[
		H^n(K,A^p)\otimes_{\mathbb Z}\mathbb Q
			\rightarrow H^n(K,(A\otimes_{\mathbb Z}\mathbb Q)^p)
	\]
	is an isomorphism.
	Then,
	for every $m\geqq 1$,
	the image of $H^m(f)$ is precisely $H^m(K,A^p)_{\mathrm{tor}}$.
\end{proposition}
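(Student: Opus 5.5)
The first assertion is a naturality statement, and the second is a comparison of two long exact sequences built from the same discrete quotient $(A\otimes_{\mathbb Z}\mathbb Q)/A$.

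\textbf{The identity $\eta^n=\lambda^n\circ H^n(f)$.} Continuous cohomology $H^n(K,-)$ is functorial in continuous $K$-equivariant coefficient maps. For each $i$, the projection $\pi_i^d:A^d\to A_i$ used to define $\eta^n$ factors as the composite $A^d\xrightarrow{f}A^p\xrightarrow{\pi_i^p}A_i$. The projection $\pi_i^p$ is continuous, and $\lambda^n$ is by definition the map $(H^n(\pi_i^p))_i$. Functoriality then gives $H^n(\pi_i^d)=H^n(\pi_i^p)\circ H^n(f)$ for every $i$. These maps are compatible with the transition maps of the system, so the universal property of the inverse limit yields $\eta^n=\lambda^n\circ H^n(f)$. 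The ``in particular'' clause is then immediate.

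\textbf{Image of $H^m(f)$ lies in the torsion.} Let $m\geqq 1$. Since $K$ is profinite and $A^d$ is discrete,
\[
H^m(K,A^d)=\varinjlim_U H^m(K/U,A^U),
\]
where $U$ runs over open normal subgroups of $K$. Each term is killed by $[K:U]$, so $H^m(K,A^d)$ is torsion. Its image under $H^m(f)$ therefore lies in $H^m(K,A^p)_{\mathrm{tor}}$.

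\textbf{Every torsion class is hit.} Put $D:=(A\otimes_{\mathbb Z}\mathbb Q)/A$, with the discrete topology. There are two short exact sequences of coefficients:
\[
0\to A^d\to (A\otimes\mathbb Q)^d\to D\to 0,
\qquad
0\to A^p\to (A\otimes\mathbb Q)^p\to D\to 0.
\]
\begin{itemize}
\item In the second sequence, $D$ is the quotient topology because $A^p$ is open in $(A\otimes\mathbb Q)^p$. Since $D$ is discrete, any set-theoretic section $D\to(A\otimes\mathbb Q)^p$ is continuous. Hence the sequence of continuous cochains is exact and gives a long exact sequence in continuous cohomology.
\item The identity maps give a morphism from the first sequence to the second, which is the identity on $D$. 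Naturality of the connecting maps gives $\delta^p=H^m(f)\circ\delta^d$ as maps $H^{m-1}(K,D)\to H^m(K,A^p)$.
\end{itemize}

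Now let $c\in H^m(K,A^p)$ be torsion. I claim its image $c'$ in $H^m(K,(A\otimes\mathbb Q)^p)$ is zero.
\begin{itemize}
\item For $m=1$: $H^1(K,(A\otimes\mathbb Q)^p)=\operatorname{Hom}_{\mathrm{cts}}(K,(A\otimes\mathbb Q)^p)$ is torsion-free, so $c'=0$.
\item For $m\geqq 2$: by hypothesis the map $H^m(K,A^p)\otimes\mathbb Q\to H^m(K,(A\otimes\mathbb Q)^p)$ is an isomorphism. The map $H^m(K,A^p)\to H^m(K,(A\otimes\mathbb Q)^p)$ factors as $x\mapsto x\otimes 1$ followed by this isomorphism. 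A torsion $x$ has $x\otimes 1=0$, so again $c'=0$.
\end{itemize}

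By exactness, $c=\delta^p(d)$ for some $d\in H^{m-1}(K,D)$. Then
\[
c=\delta^p(d)=H^m(f)\bigl(\delta^d(d)\bigr)
\]
lies in the image of $H^m(f)$. Combined with the previous step, the image of $H^m(f)$ is exactly $H^m(K,A^p)_{\mathrm{tor}}$.

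\textbf{Main obstacle.} The delicate point is the profinite long exact sequence together with the compatibility $\delta^p=H^m(f)\circ\delta^d$. The plan handles it through the openness of $A^p$, which makes $D$ discrete and hence guarantees continuous sections.
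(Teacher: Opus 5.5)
Your proof is correct and follows essentially the paper's route. The identity $\eta^n=\lambda^n\circ H^n(f)$ is the same universal-property argument; you run it on cohomology, while the paper runs it on cochains via $C^\bullet(K,A^p)\cong\varprojlim_i C^\bullet(K,A_i)$. The image statement comes, as in the paper, from comparing the discrete and profinite sequences with common discrete quotient $(A\otimes_{\mathbb Z}\mathbb Q)/A$ through naturality of the connecting maps. The only real difference is $m=1$: you treat it uniformly via torsion-freeness of $H^1(K,(A\otimes_{\mathbb Z}\mathbb Q)^p)$ and surjectivity of $\delta^d$, whereas the paper notes separately that $H^1(K,A^d)=0$ and $H^1(K,A^p)_{\mathrm{tor}}=0$.
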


Under the rationalization hypotheses of this proposition,
we therefore obtain,
for every $n\geqq 2$,
a short exact sequence
\[
	0\rightarrow Y^n
		\rightarrow H^n(K,A^d)
		\rightarrow H^n(K,A^p)_{\mathrm{tor}}
		\rightarrow0,
\]
where $Y^n:=\ker H^n(f)$.
When $A$ is a commutative ring,
these comparisons also give a description of the cup product.

\begin{theorem}[Corollary~\ref{theorem2}]
	Let $A$ be a torsion-free profinite commutative ring with trivial $K$-action,
	with the topology on its rationalization as above.
	Suppose that,
	for every $n\geqq 1$,
	the natural map
	\[
		H^n(K,A^p)\otimes_{\mathbb Z}\mathbb Q
			\rightarrow H^n(K,(A\otimes_{\mathbb Z}\mathbb Q)^p)
	\]
	is an isomorphism.
	Set
	\[
		X^n:=
		\begin{cases}
			H^0(K,A^p)=A & (n=0)\\
			H^n(K,A^p)_{\mathrm{tor}} & (n\geqq 1)
		\end{cases}
	\]
	and
	\[
		Y^n:=
		\begin{cases}
			0 & (n=0,1)\\
			H^{n-1}(K,A^p)
			\otimes_{\mathbb Z}(\mathbb Q/\mathbb Z)
			& (n\geqq 2)
		\end{cases}
	\]
	Write $X^*:=\bigoplus_{n\geqq 0}X^n$ and
	$Y^*:=\bigoplus_{n\geqq 0}Y^n$.
	The cup product on $H^*(K,A^p)$
	restricts to a product on $X^*$.
	The comparison induces a surjective homomorphism
	of graded rings
	\[
	\overline{H(f)}:
	H^*(K,A^d)\longrightarrow X^*.
	\]
	Its kernel is naturally identified with $Y^*$;
	let $\iota:Y^*\to H^*(K,A^d)$
	denote the corresponding inclusion.
	
	Choose an additive section
	\[
		s^n:X^n\rightarrow H^n(K,A^d)
	\]
	in each degree and put $s:=\bigoplus_{n\geqq 0}s^n$.
	In degree zero, take $s^0$ to be the identity,
	and in degree one take the zero map,
	since $X^1=H^1(K,A^d)=0$.
	Then
	\[
		\phi_s:X^*\oplus Y^*\rightarrow H^*(K,A^d)~;~(x,y)\mapsto s(x)+\iota(y)
	\]
	is an isomorphism of graded abelian groups.
	Under this isomorphism,
	the cup product corresponds to the product
	\[
	\begin{aligned}
		&(x_1,y_1)*(x_2,y_2)\\
		&\quad=
		\Bigl(
			x_1\cup x_2,\,
			(\operatorname{pr}_Y\circ\phi_s^{-1})
				\Bigl(
				s(x_1)\cup\iota(y_2)
				+\iota(y_1)\cup s(x_2)\\
				&\hspace{43mm}
				+s(x_1)\cup s(x_2)-s(x_1\cup x_2)
			\Bigr)
		\Bigr),
	\end{aligned}
	\]
	where $\operatorname{pr}_Y:X^*\oplus Y^*\to Y^*$ is the projection.
	In particular,
	$H^*(K,A^d)$ is a square-zero extension of $X^*$ by $Y^*$,
	and the extension splits as a sequence
	of graded abelian groups.
\end{theorem}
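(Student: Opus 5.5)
We will assemble the theorem from the preceding Proposition together with the long exact sequence attached to the short exact sequence of coefficient modules
\[
	0\rightarrow A^d\rightarrow (A\otimes_{\mathbb Z}\mathbb Q)^d\rightarrow (A\otimes_{\mathbb Z}\mathbb Q/\mathbb Z)^d\rightarrow 0 .
\]

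\textbf{Step 1: compute $H^*(K,A^d)$.} Since $K$ is profinite and $(A\otimes\mathbb Q)^d$ is a discrete $\mathbb Q$-vector space, we have $H^n(K,(A\otimes\mathbb Q)^d)=0$ for $n\geqq 1$. The connecting map therefore gives
\[
	H^{n-1}(K,(A\otimes\mathbb Q/\mathbb Z)^d)\cong H^n(K,A^d)\quad\text{for } n\geqq 2.
\]
In degree one it gives $H^1(K,A^d)\cong\operatorname{coker}(A\otimes\mathbb Q\to A\otimes\mathbb Q/\mathbb Z)=0$, which justifies $X^1=H^1(K,A^d)=0$.

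\textbf{Step 2: compute $\ker H^n(f)$.} Next I would compare with the profinite sequence $0\to A^p\to (A\otimes\mathbb Q)^p\to (A\otimes\mathbb Q)/A\to 0$. The quotient $(A\otimes\mathbb Q)/A$ is discrete because $A^p$ is open in $(A\otimes\mathbb Q)^p$, and it equals $A\otimes\mathbb Q/\mathbb Z$ with the discrete topology. The identity maps give a morphism from the discrete sequence to this one, and hence a commutative ladder of long exact sequences. In degree $n\geqq 2$, the third term $H^{n-1}(K,A\otimes\mathbb Q/\mathbb Z)$ is common to both rows. The profinite row, combined with the hypothesis $H^{n-1}(K,A^p)\otimes\mathbb Q\cong H^{n-1}(K,(A\otimes\mathbb Q)^p)$, yields
\[
	0\to H^{n-1}(K,A^p)\otimes\mathbb Q/\mathbb Z\to H^{n-1}(K,A\otimes\mathbb Q/\mathbb Z)\to H^n(K,A^p)_{\mathrm{tor}}\to 0,
\]
because the kernel of $H^n(K,A^p)\to H^n(K,A^p)\otimes\mathbb Q$ is exactly the torsion. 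By the ladder, $H^n(f)$ is identified with the right-hand map of this sequence. So its image is $H^n(K,A^p)_{\mathrm{tor}}$, in agreement with the Proposition, and its kernel is $Y^n=H^{n-1}(K,A^p)\otimes\mathbb Q/\mathbb Z$. In degree $0$, $H^0(f)$ is the identity of $A$. This gives surjectivity of $\overline{H(f)}$ onto $X^*$ and the identification $\ker=\,Y^*$. Since any surjection of abelian groups with chosen additive sections splits as a sequence of groups, $\phi_s$ is an isomorphism of graded abelian groups.

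\textbf{Step 3: multiplicative structure.} $H(f)$ is a ring map because cup products are natural for the continuous bilinear multiplication $A^d\times A^d\to A^d$ mapping to $A^p\times A^p\to A^p$. A product of torsion classes is torsion, and degree-zero classes act $A$-linearly, so $X^*$ is a subring of $H^*(K,A^p)$ and $\overline{H(f)}$ is a ring surjection. The product formula then follows formally from expanding $(s(x_1)+\iota(y_1))\cup(s(x_2)+\iota(y_2))$. The $X$-component is $x_1\cup x_2$ by multiplicativity of $\overline{H(f)}$. The $Y$-component is the stated expression, provided $\iota(y_1)\cup\iota(y_2)=0$.

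\textbf{Main obstacle: square-zero.} The real content is the vanishing $\iota(y_1)\cup\iota(y_2)=0$. My plan is to show that $Y^n$ is divisible, being a tensor with $\mathbb Q/\mathbb Z$, and that for $n\geqq 2$ every class in $H^n(K,A^d)$ is torsion, since it is the image of a torsion class under the connecting map. Then for $y_1\in Y^a$, write $\iota(y_1)=N\iota(y_1')$ with $y_1'\in Y^a$, where $N$ kills $\iota(y_2)$. This gives
\[
	\iota(y_1)\cup\iota(y_2)=\iota(y_1')\cup N\iota(y_2)=0.
\]
Both $a,b\geqq 2$, since $Y^0=Y^1=0$. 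One has to check that divisibility holds inside the ideal $Y$, i.e. that $\iota$ is additive so that $N\iota(y')=\iota(Ny')$. This is immediate, so the argument should go through, giving the square-zero extension.
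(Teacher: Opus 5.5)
Your proposal is correct and follows essentially the same route as the paper: comparing $0\to A^d\to(A\otimes_{\mathbb Z}\mathbb Q)^d\to(A\otimes_{\mathbb Z}\mathbb Q)/A\to0$ with its profinite counterpart gives $H^n(f)=\delta_p\circ\delta_d^{-1}$, and the rationalization hypothesis then identifies the image and the kernel, exactly as in Proposition~\ref{proposition1-1}; the product formula is the same formal expansion, with $\iota(y_1)\cup\iota(y_2)=0$ coming from the vanishing of divisible $\otimes$ torsion, which your elementwise $N$-argument reproduces. Two points you leave implicit are each one-line remarks: $X^1=H^1(K,A^p)_{\mathrm{tor}}=0$ holds because $\operatorname{Hom}_{\mathrm{cts}}(K,A^p)$ is torsion-free (it does not follow from $H^1(K,A^d)=0$), and the additive sections exist because each $Y^n$ is divisible, hence injective.
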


We next apply these results to derived Hecke algebras
with coefficients in torsion-free profinite rings.

\begin{proposition}[Proposition~\ref{proposition3}]
	Let $R$ be a torsion-free profinite commutative ring
	with trivial $G$-action.
	Suppose that $(S_j)_{j\in J}$ is a filtered direct system
	of finite abelian groups,
	each equipped with the structure of a finite commutative ring,
	and that
	\[
		(R\otimes_{\mathbb Z}\mathbb Q)/R \cong\varinjlim_{j\in J}S_j
	\]
	as abelian groups.
	Then
	\[
		\mathscr H^0(G,K)_R
		\cong\bigoplus_{x\in K\backslash G/K}R,
		\qquad
		\mathscr H^1(G,K)_R=0,
	\]
	and, for every $n\geqq 2$,
	there is an isomorphism of abelian groups
	\[
		\mathscr H^n(G,K)_R
		\cong
		\varinjlim_{j\in J}
		\mathscr H^{n-1}(G,K)_{S_j}.
	\]
	Here the transition maps on the right
	are induced by the additive coefficient maps
	through the group-cohomological description,
	and the direct limit is taken
	in the category of abelian groups.
\end{proposition}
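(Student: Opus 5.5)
We start from the Ext description of the derived Hecke algebra,
\[
	\mathscr H^n(G,K)_R\cong\bigoplus_{x\in K\backslash G/K}H^n(K_x,R^d),
\]
which is an isomorphism of abelian groups in each degree. This reduces everything to computing $H^n(K_x,R^d)$ for each open compact subgroup $K_x$, and then checking that the resulting identifications commute with direct sums and with the transition maps.

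Degree zero is immediate. Since $R$ carries the trivial action, $H^0(K_x,R^d)=R$, and summing over $x$ gives the first isomorphism.

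For positive degrees we use that $R^d$ is a discrete torsion-free module, so the sequence
\[
	0\to R\to R\otimes_{\mathbb Z}\mathbb Q\to (R\otimes_{\mathbb Z}\mathbb Q)/R\to 0
\]
is exact as a sequence of discrete trivial $K_x$-modules. The middle term is a discrete $\mathbb Q$-vector space. For a profinite group, continuous cohomology with discrete coefficients is torsion in positive degrees, being a direct limit of cohomology of finite quotients. Hence $H^n(K_x,R\otimes\mathbb Q)=0$ for $n\geqq 1$, since it is simultaneously torsion and uniquely divisible. The long exact sequence then gives two facts:
\[
	H^1(K_x,R^d)=\operatorname{coker}\bigl(R\otimes\mathbb Q\to (R\otimes\mathbb Q)/R\bigr)=0,
	\qquad
	H^n(K_x,R^d)\cong H^{n-1}\bigl(K_x,(R\otimes\mathbb Q)/R\bigr)\quad(n\geqq 2).
\]
Summing the first over $x$ gives $\mathscr H^1(G,K)_R=0$.

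For $n\geqq 2$, we next use that continuous cohomology of a profinite group with discrete coefficients commutes with filtered colimits of discrete modules. Applying this to $(R\otimes\mathbb Q)/R\cong\varinjlim_j S_j$, where the $S_j$ are finite and hence discrete, we get
\[
	H^{n-1}\bigl(K_x,(R\otimes\mathbb Q)/R\bigr)\cong\varinjlim_j H^{n-1}(K_x,S_j).
\]
Direct sums commute with filtered colimits, so
\[
	\mathscr H^n(G,K)_R\cong\bigoplus_x\varinjlim_j H^{n-1}(K_x,S_j)\cong\varinjlim_j\bigoplus_x H^{n-1}(K_x,S_j).
\]
The Ext description applied to the finite ring $S_j$ identifies the last term with $\varinjlim_j\mathscr H^{n-1}(G,K)_{S_j}$. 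The transition maps are the ones induced on cohomology by the additive maps $S_j\to S_{j'}$, as the statement specifies. The ring structure on $S_j$ is used only so that $\mathscr H(G,K)_{S_j}$ is defined.

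The main obstacle is justifying the commutation of continuous cohomology with filtered colimits of discrete modules. I would prove it with continuous cochains. A continuous cochain $K_x^{n}\to\varinjlim S_j$ has compact domain and discrete target, so it has finite image. Hence it factors through some $S_j$, and equalities between such cochains also hold at some finite stage. The cochain complex therefore commutes with the filtered colimit, and filtered colimits are exact, so cohomology does too. A smaller point is that the Ext description must be natural in the coefficient module, so that the transition maps match. This follows from the construction via Shapiro's lemma, taken as given.
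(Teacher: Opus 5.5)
Your proposal is correct and follows the paper's proof essentially step for step. Like the paper, you reduce via $\mathscr H^n(G,K)_R\cong\bigoplus_x H^n(K_x,R^d)$, shift degree using $0\to R\to R\otimes_{\mathbb Z}\mathbb Q\to(R\otimes_{\mathbb Z}\mathbb Q)/R\to 0$ together with $H^{\geqq 1}(K_x,R\otimes_{\mathbb Z}\mathbb Q)=0$, commute cohomology with $\varinjlim_j S_j$, and swap the colimit with the direct sum over $K\backslash G/K$. The only minor differences are that you obtain $H^1(K_x,R^d)=0$ from the long exact sequence rather than from $\operatorname{Hom}_{\mathrm{cts}}(K_x,R^d)=0$, and that you supply the finite-image cochain argument for commuting with filtered colimits, which the paper only cites.
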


We use the superscripts $d$ and $p$ for $R$
in the same way as for $A$.
To compare with finite coefficients in the opposite direction,
fix a presentation
\[
	R\cong\varprojlim_{i\in I}R_i
\]
as an inverse limit of finite commutative rings
and ring homomorphisms.
This is a presentation as a topological ring
and is distinct from the direct system used
in the preceding proposition.
For every $n\geqq 0$, define
\begin{align*}
	\overline{\mathscr H}^{\,n}(G,K)_R
	&:=
	\bigoplus_{x\in K\backslash G/K}
	\varprojlim_{i\in I}H^n(K_x,R_i),\\
	\widehat{\mathscr H}^{\,n}(G,K)_R
	&:=
	\varprojlim_{i\in I}\mathscr H^n(G,K)_{R_i}.
\end{align*}
Let $\overline{\mathscr H}(G,K)_R$
and $\widehat{\mathscr H}(G,K)_R$
be the direct sums of their respective homogeneous components.
All inverse limits of graded objects are taken degreewise.
The coefficient maps give natural comparison maps
\[
	\mathscr H(G,K)_R
	\xrightarrow{\rho}
	\overline{\mathscr H}(G,K)_R
	\lhook\joinrel\longrightarrow
	\widehat{\mathscr H}(G,K)_R.
\]
The last algebra carries the inverse-limit product,
and the middle algebra is its subalgebra of elements
whose double-coset support, in each degree,
is contained in a fixed finite set at every finite level.
We show that these comparison maps
are homomorphisms of graded rings.

Assume now that the rationalization of $R$
has the topological $\mathbb Q$-module structure specified above,
and that for every $x\in K\backslash G/K$
and every $q\geqq 1$, both canonical maps
\[
\begin{aligned}
	H^q(K_x,R^p)
		&\longrightarrow\varprojlim_iH^q(K_x,R_i),\\
	H^q(K_x,R^p)\otimes_{\mathbb Z}\mathbb Q
		&\longrightarrow
	H^q\bigl(K_x,(R\otimes_{\mathbb Z}\mathbb Q)^p\bigr)
\end{aligned}
\]
are isomorphisms.
For the remainder of the discussion of the product,
these are our comparison hypotheses.
Set
\[
\begin{aligned}
	X^0&:=\overline{\mathscr H}^{\,0}(G,K)_R,\\
	X^1&:=0,\\
	X^r&:=
	\bigl(\overline{\mathscr H}^{\,r}(G,K)_R\bigr)_{\mathrm{tor}}
	\qquad(r\geqq 2),
\end{aligned}
\]
and put $X:=\bigoplus_{r\geqq 0}X^r$.
We give $X$ the product induced from
$\overline{\mathscr H}(G,K)_R$
and denote it by $*_p$.
The image of $\rho$ is $X$.
Let
\[
\pi^n:\mathscr H^n(G,K)_R\longrightarrow X^n
\]
be the map obtained by restricting the codomain
of $\rho$ in degree $n$ to $X^n$,
and put $\pi:=\bigoplus_{n\geqq 0}\pi^n$.
Set
\[
Y^n:=\ker\pi^n,
\qquad
Y:=\bigoplus_{n\geqq 0}Y^n.
\]
Then $Y^0=Y^1=0$,
and we obtain a short exact sequence of graded rings
\[
0\longrightarrow Y
\longrightarrow\mathscr H(G,K)_R
\xrightarrow{\pi}X
\longrightarrow0.
\]
Each $Y^n$ is divisible,
so this sequence splits as a sequence of graded abelian groups;
no splitting as graded rings is asserted.
Choose a degree-preserving additive section
$s:X\to\mathscr H(G,K)_R$ and define
\begin{align*}
	d_s:X\times X&\longrightarrow Y,\\
	(a,b)&\longmapsto s(a)*s(b)-s(a*_pb).
\end{align*}
The fact that this expression lies in $Y$
follows from the multiplicativity of $\pi$.
We obtain the following description of the product.

\begin{theorem}[Theorem~\ref{Theorem4}]
	Under the comparison hypotheses above,
	let $h_1\in\mathscr H^i(G,K)_R$ and
	$h_2\in\mathscr H^j(G,K)_R$.
	Then the following statements hold.
	\begin{enumerate}
		\item
		The image of the product in $X$ is given by
		\[
		\pi^{i+j}(h_1*h_2)
		=\pi^i(h_1)*_p\pi^j(h_2).
		\]
		
		\item
		If $i,j>0$, then
		\[
		h_1*h_2
		=s\bigl(\pi^i(h_1)*_p\pi^j(h_2)\bigr)
		+d_s\bigl(\pi^i(h_1),\pi^j(h_2)\bigr).
		\]
		
		\item
		If $i,j>0$ and either $h_1\in Y^i$
		or $h_2\in Y^j$, then
		\[
		h_1*h_2=0.
		\]
	\end{enumerate}
\end{theorem}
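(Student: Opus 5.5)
The plan is to prove (3) first and then derive (1) and (2) formally from it and from the multiplicativity of the comparison maps.

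For (1), I will use the fact already recorded in the setup: $\rho:\mathscr H(G,K)_R\to\overline{\mathscr H}(G,K)_R$ is a homomorphism of graded rings, and its image is $X$. Then $\pi$ is just $\rho$ with its codomain restricted to the subring $X$, which carries the induced product $*_p$. Hence $\pi^{i+j}(h_1*h_2)=\rho(h_1*h_2)=\rho(h_1)*_p\rho(h_2)=\pi^i(h_1)*_p\pi^j(h_2)$.

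For (3), the key observation is that every element of positive degree is torsion. Through the group-cohomological description, an element of $\mathscr H^j(G,K)_R$ with $j\geqq 1$ is a finite sum of classes in $H^j(K_x,R^d)$. Each $K_x$ is profinite and $R^d$ is discrete with trivial action, so
\[
	H^j(K_x,R^d)=\varinjlim_U H^j(K_x/U,R),
\]
where $U$ runs over the open normal subgroups of $K_x$. For $j\geqq1$, each $H^j(K_x/U,R)$ is killed by the order of the finite group $K_x/U$. Therefore a given $h\in\mathscr H^j(G,K)_R$ with $j>0$ is killed by some integer $N\geqq1$, for instance the least common multiple of the relevant group orders over the finitely many double cosets in its support.

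Now let $h_1\in Y^i$ and $h_2\in\mathscr H^j(G,K)_R$ with $i,j>0$. Choose $N\geqq1$ with $Nh_2=0$. Since $Y^i$ is divisible, we can write $h_1=Nh_1'$ with $h_1'\in Y^i$. The Yoneda product is $\mathbb Z$-bilinear, so
\[
	h_1*h_2=(Nh_1')*h_2=h_1'*(Nh_2)=0.
\]
The case $h_2\in Y^j$ is symmetric: use $Nh_1=0$ and divide $h_2$ inside $Y^j$. This is where the hypothesis $i,j>0$ enters, since degree-$0$ elements need not be torsion.

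For (2), write $a=\pi^i(h_1)$ and $b=\pi^j(h_2)$. Then $h_1=s(a)+y_1$ and $h_2=s(b)+y_2$ with $y_1\in Y^i$ and $y_2\in Y^j$, because $\pi\circ s=\mathrm{id}$ and $Y=\ker\pi$. Expanding bilinearly gives
\[
	h_1*h_2=s(a)*s(b)+s(a)*y_2+y_1*s(b)+y_1*y_2.
\]
Since $i,j>0$, the last three terms vanish by (3). The first term equals $s(a*_pb)+d_s(a,b)$ by the definition of $d_s$.

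The only step needing care is the torsion claim in (3). It relies on the compatibility of continuous cohomology with discrete coefficients with direct limits over finite quotients, and on each Hecke element having finite double-coset support. The rest is formal.
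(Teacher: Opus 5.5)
Your proof is correct and follows essentially the argument the paper has in mind; the paper gives no separate proof and relies on the preceding discussion. Part (1) is the multiplicativity of $\rho$ with image $X$; part (3) is the vanishing of a bilinear product on a divisible group against a torsion group, as in Proposition~\ref{proposition1-1}(3); and part (2) follows by writing $h_k=s(\pi(h_k))+y_k$ and discarding the terms involving $Y$ — and your explicit justification that positive-degree elements are torsion, via $H^j(K_x,R^d)=\varinjlim_U H^j(K_x/U,R)$, supplies an input the paper uses without comment.
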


Finally, we compute examples
for the additive group $G=(\mathbb Q_p,+)$
with coefficients in $\mathbb Z_p$,
$\mathbb Z_l$ for $\ell\neq p$,
and $\widehat{\mathbb Z}$.
We also determine the underlying graded abelian group
of the derived Hecke algebra for
\[
G=\operatorname{GL}_2(\mathbb Q_p),
\qquad
K=\operatorname{GL}_2(\mathbb Z_p),
\]
with coefficients in $\mathbb Z_p$, assuming $p>3$.
Corollary~\ref{theorem2} simplifies parts of the cohomology calculations
with coefficients in $\mathbb Z_p^d$ and $\mathbb Z_p^p$.
Since we also consider $\widehat{\mathscr H}(G,K)_R$,
we give the calculations with finite coefficients
and with $\mathbb Q_p/\mathbb Z_p$ as well.
In particular, we obtain the following result.

\begin{proposition}[Proposition~\ref{proposition5}]
	Let $p>3$.
	Set $F_0=F_1=0$ and
	\[
	F_n:=\mathbb Z/p^{n-1}\mathbb Z
	\qquad(n\geqq 2).
	\]
	Let
	\[
	A:=
	\left\{
	x_{b,n}=
	\begin{pmatrix}
		p^{b+n}&0\\
		0&p^b
	\end{pmatrix}
	\;\middle|\;
	b\in\mathbb Z,\ n\geqq 0
	\right\}.
	\]
	For an abelian group $M$,
	write $M[A]:=\bigoplus_{x\in A}Mx$.
	Then, as abelian groups,
	\[
	\mathscr H^i
	\bigl(
	\operatorname{GL}_2(\mathbb Q_p),
	\operatorname{GL}_2(\mathbb Z_p)
	\bigr)_{\mathbb Z_p}
	\cong
	\begin{cases}
		\mathbb Z_p[A] & i=0,\\
		0 & i=1,\\
		\displaystyle
		\bigoplus_{\substack{b\in\mathbb Z\\n\geqq 0}}
		\bigl(\mathbb Q_p/\mathbb Z_p\oplus F_n\bigr)x_{b,n}
		& i=2,\\
		\displaystyle
		\bigoplus_{\substack{b\in\mathbb Z\\n\geqq 0}}
		F_nx_{b,n}
		& i=3,\\
		(\mathbb Q_p/\mathbb Z_p)[A]
		& i=4,5,\\
		0 & i\geqq 6.
	\end{cases}
	\]
\end{proposition}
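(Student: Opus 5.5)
The plan is to reduce to continuous cohomology of the stabilizers $K_x$ via the Ext description, and then apply Corollary~\ref{theorem2} to each $K_x$ separately, with $A=\mathbb Z_p$.

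\textbf{Double cosets and stabilizers.} By the Cartan decomposition, the matrices $x_{b,n}$ with $b\in\mathbb Z$ and $n\geqq 0$ form a set of representatives of $K\backslash G/K$ for $K=\operatorname{GL}_2(\mathbb Z_p)$. Hence
\[
\mathscr H^i(G,K)_{\mathbb Z_p}\cong\bigoplus_{b,n}H^i(K_{x_{b,n}},\mathbb Z_p^d).
\]
The scalar $p^b$ is central, so $K_{x_{b,n}}$ depends only on $n$. It equals $K$ when $n=0$, and for $n\geqq 1$ it is the congruence subgroup $\Gamma_0(p^n)$ of matrices whose off-diagonal entry on one side is divisible by $p^n$. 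Each of these is a compact $p$-adic analytic group. Its continuous $\mathbb Z_p$-cohomology is therefore finitely generated in every degree, so the rationalization hypothesis of Corollary~\ref{theorem2} holds.

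\textbf{Applying the corollary.} For $n\geqq 2$, Corollary~\ref{theorem2} gives
\[
H^n(K_x,\mathbb Z_p^d)\cong H^n(K_x,\mathbb Z_p^p)_{\mathrm{tor}}\oplus H^{n-1}(K_x,\mathbb Z_p^p)\otimes\mathbb Q/\mathbb Z.
\]
In degree $0$ we get $\mathbb Z_p$, and in degree $1$ we get $0$. Since $\mathbb Z_p$ is $\ell$-divisible for $\ell\neq p$, we have $\mathbb Z_p\otimes\mathbb Q/\mathbb Z\cong\mathbb Q_p/\mathbb Z_p$. So it remains to find the ranks and the torsion of $H^q(K_x,\mathbb Z_p)$.

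\textbf{Ranks.} By Lazard, $H^q(K_x,\mathbb Q_p)$ is the Lie algebra cohomology $H^q(\mathfrak{gl}_2,\mathbb Q_p)$. This is an exterior algebra on generators in degrees $1$ and $3$, so the ranks are $1,1,0,1,1$ in degrees $0$ through $4$ and $0$ beyond. The divisible summands $\mathbb Q_p/\mathbb Z_p$ therefore occur exactly in degrees $2$, $4$ and $5$ (coming from $H^1$, $H^3$ and $H^4$), and in no other degree.

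\textbf{Torsion.} I would first pass to a pro-$p$ Sylow subgroup $P\subset K_x$. Its index is prime to $p$, so $H^*(K_x,\mathbb Z_p)$ is the set of invariants in $H^*(P,\mathbb Z_p)$ under restriction/corestriction. Since $p>3$, $P$ is torsion-free (uniform up to finite index), so it is an orientable Poincaré duality group of dimension $4$. The dualizing character is trivial because the adjoint action on $\mathfrak{gl}_2$ has determinant $1$. Duality and the universal coefficient theorem then give:
\begin{itemize}
\item $H^1$ is torsion-free;
\item $H^2_{\mathrm{tor}}\cong (H_1)_{\mathrm{tor}}^\vee$;
\item $H^3_{\mathrm{tor}}\cong (H_1)_{\mathrm{tor}}$;
\item $H^4\cong H_0=\mathbb Z_p$ is torsion-free;
\item $H^q=0$ for $q\geqq 5$.
\end{itemize}
All of this is compatible with taking $K_x$-invariants, so everything reduces to the pro-$p$ part of the abelianization $K_x^{\mathrm{ab}}$. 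Together with the ranks, this yields exactly the pattern stated in the proposition.

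\textbf{Main obstacle: the abelianizations.} I expect the key step to be computing the $p$-part of $K_x^{\mathrm{ab}}$ as $\mathbb Z_p^\times$ times $F_n$. For $n=0$, $\operatorname{SL}_2(\mathbb Z_p)$ is perfect when $p>3$, so the abelianization is just $\det$. For $n\geqq 1$:
\begin{itemize}
\item Since $p>3$, we may choose a torus element whose diagonal ratio is not $\equiv 1 \pmod p$. Conjugating by it kills both unipotent subgroups in the abelianization.
\item The commutator of $\begin{pmatrix}1&b\\0&1\end{pmatrix}$ and the lower unipotent with entry $p^nc$ is congruent, modulo the unipotents, to $\operatorname{diag}(1+p^nbc,(1+p^nbc)^{-1})$.
\item Hence the abelianization is $(\mathbb Z_p^\times)^2$ modulo $\{\operatorname{diag}(u,u^{-1}):u\in 1+p^n\mathbb Z_p\}$. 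This is $\det$ times $(1+p\mathbb Z_p)/(1+p^n\mathbb Z_p)\cong\mathbb Z/p^{n-1}$, up to prime-to-$p$ factors.
\end{itemize}
This gives $F_n$, including $F_1=0$. The delicate points are making this commutator computation exact rather than only modulo higher-order terms, and checking that no further torsion appears.
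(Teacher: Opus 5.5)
Your argument is correct, but it takes a different route from the paper's.

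\textbf{The paper's route.} The paper splits off the centre, $K_n\cong L_n\times(1+p\mathbb Z_p)$ with $L_n=\{g\in K_n:\det g\in\mu_{p-1}\}$, so the real work happens on the $3$-dimensional Poincar\'e group $L_n$. There the $p$-part $F_n$ of $L_n^{\mathrm{ab}}$, together with duality, determines $H^*(L_n,M)$ completely for $M=\mathbb Z_p^p,\ \mathbb Z/p^m\mathbb Z,\ \mathbb Q_p/\mathbb Z_p$; Corollary~\ref{theorem2} is used only to pass to $\mathbb Z_p^d$ for $L_n$. The paper then assembles $H^*(K_n,\mathbb Z_p^d)$ by the K\"unneth decomposition $\bigoplus_{r+s=i}H^r(L_n,H^s(\mathbb Z_p,\mathbb Z_p^d))$.

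\textbf{Your route.} You stay with the $4$-dimensional group $K_x$ and apply the split sequence of Corollary~\ref{theorem2} to it directly. The divisible summands come from the free ranks of $H^{q-1}(K_x,\mathbb Z_p)$, and the finite summands come from $H_1(K_x,\mathbb Z_p)\cong\mathbb Z_p\oplus F_n$ via duality and universal coefficients. This avoids the product decomposition and the K\"unneth step with the non-torsion discrete module $\mathbb Z_p^d$. It also shows transparently why $\mathbb Q_p/\mathbb Z_p$ appears exactly in degrees $2,4,5$.

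\textbf{The trade-off.} In dimension $4$, duality and $H_1$ do not determine the free rank of $H^2$, which is why you need Lazard's comparison (or the vanishing of the rational Euler characteristic). The paper's reduction to dimension $3$ needs no such input. It also produces the finite-coefficient and $\mathbb Q_p/\mathbb Z_p$ groups that the paper uses for $\overline{\mathscr H}$ and $\widehat{\mathscr H}$.

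Two small points.
\begin{enumerate}
\item The pro-$p$ Sylow detour is unnecessary. For $n=0$ the Sylow subgroup is not normal in $\operatorname{GL}_2(\mathbb Z_p)$, so ``invariants'' would have to mean ``stable classes''. You can avoid this entirely: $K_x$ itself has no $p$-torsion and is a Poincar\'e group at $p$. Your observation $\det\operatorname{Ad}=1$ already shows that its dualizing module is trivial as a $K_x$-module.
\item The ``delicate points'' in the abelianization close up exactly. In the Iwahori factorization, the commutator of an upper and a lower unipotent in $K_x$ has diagonal part $\operatorname{diag}(1+t+t^2,(1+t+t^2)^{-1})$, where $t\in p^n\mathbb Z_p$ is the product of the two unipotent entries. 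By Hensel, $t\mapsto 1+t+t^2$ maps $p^n\mathbb Z_p$ onto $1+p^n\mathbb Z_p$. The upper bound that rules out further torsion comes from the homomorphism $g\mapsto(\det g,\ x\bmod 1+p^n\mathbb Z_p)$ used in the paper.
\end{enumerate}
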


This computation also gives an obstruction
to a derived Satake isomorphism
with coefficients in the discrete ring $\mathbb Z_p$.
Let $T$ be the diagonal split maximal torus
of $\operatorname{GL}_2$,
put
\[
K_T:=
T(\mathbb Q_p)\cap\operatorname{GL}_2(\mathbb Z_p)
\cong(\mathbb Z_p^\times)^2,
\]
and let $W\cong S_2$ be its Weyl group.
The compact group $K_T$ has $p$-cohomological dimension $2$;
see
\cite[Section~2.3.1, equation~(38)]
{koziol2024parahoricheckeextalgebrascharacteristic}.
Since $H^j(K_T,\mathbb Q_p^d)=0$ for every $j>0$,
the short exact sequence of discrete modules
\[
0\longrightarrow\mathbb Z_p^d
\longrightarrow\mathbb Q_p^d
\longrightarrow\mathbb Q_p/\mathbb Z_p
\longrightarrow0
\]
gives
\[
H^i(K_T,\mathbb Z_p^d)
\cong H^{i-1}(K_T,\mathbb Q_p/\mathbb Z_p)=0
\qquad(i\geqq 4).
\]
Since $T(\mathbb Q_p)$ is abelian, it follows that
\[
\mathscr H^i(T(\mathbb Q_p),K_T)_{\mathbb Z_p}=0
\qquad(i\geqq 4).
\]
In contrast,
the preceding proposition gives nonzero components
in degrees $4$ and $5$
for $\mathscr H(G,K)_{\mathbb Z_p}$.
Consequently,
there is no isomorphism of graded algebras
\[
\mathscr H(G,K)_{\mathbb Z_p}
\cong
\bigl(
\mathscr H(T(\mathbb Q_p),K_T)_{\mathbb Z_p}
\bigr)^W.
\]
Thus a derived Satake isomorphism of this Weyl-invariant form
does not hold for the discrete $\mathbb Z_p$-coefficient algebra
considered here.
This is outside the coefficient hypotheses of
\cite[Theorem~3.3]{Venkatesh}.
\section*{Acknowledgements}
The author is grateful to Professor Seidai Yasuda for continued advice and helpful discussions over many years.

\section{Setting}

Let $G$ be a locally profinite group,
and let $K$ be an open compact subgroup of $G$.
Let $R$ be a commutative ring,
in particular a profinite ring,
and let $A$ be a smooth $R[G]$-representation.
We denote the continuous group cohomology of $G$ with coefficients in $A$ by $H^*(G,A)$.

We fix a complete set of representatives in $G/K$ for $K\backslash G/K$.

By a $K$-injective resolution,
we mean a resolution in the homotopical sense
(\cite{stacks-project}, Part~1, Section~13.31).

Let $A=\bigoplus_i A_i$ be a graded ring.
For $a\in A_i$ and $b\in A_j$,
we define $a*_{\mathrm{gop}}b$ to be the graded opposite product,
that is,
\[
	a*_{\mathrm{gop}}b=(-1)^{ij}ba.
\]

\section{The Relationship between Torsion-Free Discrete Coefficients and Profinite Coefficients}
Throughout this chapter,
let $G$ be a profinite group.
As an abelian group,
a derived Hecke algebra can be defined as a direct sum of the group cohomology groups of profinite groups.
When the coefficients of group cohomology are given by the inverse limit
$\varprojlim A_i$
of an inverse system
$(A_i,\phi_{ij},I)$,
the cohomology groups of the modules appearing in the inverse system also form an inverse system
\[
(H^*(G,A_i),H^*(\phi_{ij}),I)
\]
via the natural maps induced by the maps between the coefficients.
Thus,
one may also consider the inverse limit of the cohomology groups themselves,
\[
\varprojlim H^*(G,A_i).
\]

In this chapter,
we first investigate the relationship between group cohomology with discrete coefficients $R$
and $r$-adic completion with respect to the ideal $(r)$ generated by a suitable element $r\in R$,
and show that this approach is not sufficient for treating profinite coefficients.
We therefore next assume that the coefficient ring is torsion-free,
investigate the structures and properties of group cohomology when the coefficients are equipped with the discrete topology or the profinite topology,
and then study the relationship between the resulting cohomology groups.

\subsection{Group Cohomology with Coefficients in $A=\varprojlim_i A/J^iA$ endowed with the Discrete Topology}

Let $G$ be a profinite group,
let $R$ be a commutative ring,
and let $A$ be a smooth $R[G]$-module.
Let $r\in R$,
assume that multiplication by $r$ on $A$ is injective,
namely,
the following sequence is exact:
\[
	0\rightarrow A \xrightarrow{r} A \xrightarrow{\mod r} A/rA \rightarrow 0.
\]

Then this short exact sequence induces the following long exact sequence in group cohomology:
\[
	\cdots \rightarrow H^{n-1}(G,A/rA)\rightarrow H^n(G,A)\xrightarrow{r}H^n(G,A)\rightarrow H^{n}(G,A/rA)\rightarrow H^{n+1}(G,A)\rightarrow \cdots .
\]
Considering the image of
\[
	H^n(G,A/rA)\rightarrow H^{n+1}(G,A),
\]
we see that this image is the kernel of multiplication by $r$ on $H^{n+1}(G,A)$.
Hence we obtain an exact sequence
\[
	0\rightarrow H^n(G,A)/rH^n(G,A)\rightarrow H^n(G,A/rA)\rightarrow H^{n+1}(G,A)[r]\rightarrow 0.
\]
Here $H^n(G,A)[r]$ denotes the subgroup of elements of $H^n(G,A)$ killed by multiplication by $r$,
that is,
\[
	H^n(G,A)[r]:=\ker \left(r:H^n(G,A) \rightarrow H^n(G,A)\right).
\]
Similarly, for every $i\geqq 1$,
we write
\[
	H^n(G,A)[r^i]:=\ker\left(r^i:H^n(G,A) \rightarrow H^n(G,A)\right).
\]
Throughout the rest of this subsection,
we assume that multiplication by $r$ on $A$ is injective.
Then,
for every $i\geqq 1$,
multiplication by $r^i$ on $A$ is also injective.

Let $i<j$.
Define
\[
	\phi^1_{ij}:H^n(G,A)/r^jH^n(G,A)\rightarrow H^n(G,A)/r^iH^n(G,A)
\]
to be the natural map obtained by reducing modulo $r^i$ instead of modulo $r^j$.
Define
\[
	\phi^2_{ij}:H^n(G,A/r^jA)\rightarrow H^n(G,A/r^iA)
\]
to be the map induced by the natural reduction map on coefficients
\[
	A/r^jA\rightarrow A/r^i A~;~a+r^jA\mapsto a+r^iA.
\]
Finally,
define
\[
	\phi^3_{ij}:H^{n+1}(G,A)[r^j]\rightarrow H^{n+1}(G,A)[r^i]
\]
to be multiplication by $r^{j-i}$.
Then
\[
	0\rightarrow (H^n(G,A)/r^iH^n(G,A),\phi^1_{ij})\rightarrow  (H^n(G,A/r^iA),\phi^2_{ij})\rightarrow (H^{n+1}(G,A)[r^i],\phi^3_{ij})\rightarrow 0
\]
is a short exact sequence of inverse systems.

From the above discussion,
we obtain the following lemma.

\begin{lemma}
	For every $n\geqq 0$,
	there is an exact sequence
	\[
	0\rightarrow \varprojlim_{i\in \mathbb{N}}H^n(G,A)/r^iH^n(G,A)
	\rightarrow \varprojlim_{i\in \mathbb{N}}H^n(G,A/r^iA)
	\rightarrow \varprojlim_{i\in \mathbb{N}}H^{n+1}(G,A)[r^i]
	\rightarrow 0.
	\]
	Here the last inverse limit is taken with respect to the transition maps given by multiplication by powers of $r$.
\end{lemma}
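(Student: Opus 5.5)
The plan is to pass to the inverse limit in the short exact sequence of inverse systems indexed by $\mathbb N$ displayed just before the statement,
\[
0\rightarrow (H^n(G,A)/r^iH^n(G,A),\phi^1_{ij})\rightarrow (H^n(G,A/r^iA),\phi^2_{ij})\rightarrow (H^{n+1}(G,A)[r^i],\phi^3_{ij})\rightarrow 0 ,
\]
and then kill the obstruction to right exactness. Over $\mathbb N$ the derived functors of $\varprojlim$ vanish beyond degree one. The six-term sequence therefore reads
\[
0\rightarrow \varprojlim_i M_i\rightarrow \varprojlim_i N_i\rightarrow \varprojlim_i P_i\rightarrow \varprojlim\nolimits^1_i M_i\rightarrow\cdots,
\]
where $M_i:=H^n(G,A)/r^iH^n(G,A)$, $N_i:=H^n(G,A/r^iA)$ and $P_i:=H^{n+1}(G,A)[r^i]$. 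It suffices to prove that $\varprojlim^1_i M_i=0$.

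First I would justify that the displayed sequence really is a short exact sequence of inverse systems. In each level $i$ it is the exact sequence obtained from the long exact sequence for $0\rightarrow A\xrightarrow{r^i}A\rightarrow A/r^iA\rightarrow 0$, using the injectivity of $r^i$ on $A$ noted in the text. Compatibility with the transition maps comes from the morphism of short exact sequences
\[
(A\xrightarrow{r^j}A\rightarrow A/r^jA)\longrightarrow(A\xrightarrow{r^i}A\rightarrow A/r^iA)
\]
for $i<j$. This morphism is multiplication by $r^{j-i}$ on the first copy of $A$, the identity on the middle copy, and reduction on the quotient. Naturality of the connecting homomorphism then shows the following:
\begin{itemize}
\item on the quotients $H^n(G,A)/r^j\to H^n(G,A)/r^i$ the induced map is induced by the identity of $H^n(G,A)$, so it is $\phi^1_{ij}$;
\item on the $r$-torsion parts the connecting map lands via multiplication by $r^{j-i}$, so it is $\phi^3_{ij}$.
\end{itemize}
This is the one step that needs care with signs and directions, but it is a standard diagram chase. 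The paper already asserts it, so I take it as given.

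Next, the system $(M_i,\phi^1_{ij})$ has surjective transition maps, since each $\phi^1_{ij}$ is a quotient map $H^n(G,A)/r^j\twoheadrightarrow H^n(G,A)/r^i$. It therefore satisfies the Mittag-Leffler condition, and $\varprojlim^1_i M_i=0$. Exactness of the limit sequence follows: left exactness holds because $\varprojlim$ is a right adjoint, and surjectivity onto $\varprojlim_i P_i$ follows from the vanishing of $\varprojlim^1$ on the kernel system. If one prefers to avoid derived functors, surjectivity can be checked by hand. Given a compatible family $(p_i)$, one lifts $p_1$ to $N_1$ and then inductively lifts $p_{i+1}$ to $N_{i+1}$. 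At each step the lift is corrected by an element of $M_{i+1}$ so that it maps to the previously chosen element of $N_i$; this correction is possible because $M_{i+1}\to M_i$ is surjective.

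The only mild obstacle is the compatibility of the connecting maps with $\phi^3_{ij}$. The surjectivity argument itself is immediate from Mittag-Leffler.
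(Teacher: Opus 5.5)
Your proposal is correct and follows the paper's proof. The paper also observes that $(H^n(G,A)/r^iH^n(G,A),\phi^1_{ij})$ has surjective transition maps, so it satisfies Mittag-Leffler, and concludes that the limit of the short exact sequence of inverse systems is exact; it cites the Stacks Project lemma where you invoke the vanishing of $\varprojlim{}^1$. Your two extra steps are both correct and are left implicit in the paper: the check of the transition maps $\phi^1_{ij}$ and $\phi^3_{ij}$ via the morphism of short exact sequences given by $r^{j-i}$, the identity, and reduction, and the inductive lifting argument.
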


\begin{proof}
	By left exactness of the inverse limit functor,
	it remains to show the right exactness.
	The inverse system
	\[
		\left(H^n(G,A)/r^iH^n(G,A),\phi^1_{ij}\right)
	\]
	has surjective transition maps,
	and hence satisfies the Mittag-Leffler condition.
	Therefore,
	by \cite[Part 1, Lemma 10.86.4]{stacks-project}
	the inverse limit of the above short exact sequence of inverse systems is exact.
\end{proof}

If $H^n(G,A)$ is Hausdorff with respect to the $r$-adic topology,
that is,
if
\[
	\bigcap_{i\geqq 1}r^iH^n(G,A)=\{ 0\},
\]
then there is an injection
\[
	H^n(G,A)\hookrightarrow \varprojlim_{i\in \mathbb{N}}H^n(G,A)/r^iH^n(G,A).
\]
Moreover,
if $H^n(G,A)$ is complete with respect to the $r$-adic topology,
then this injection is an isomorphism.

However,
in the situations considered later,
$H^n(G,A)$ may contain a nonzero $r$-divisible element.
Therefore $H^n(G,A)$ is not Hausdorff with respect to the $r$-adic topology.

\subsection{Group Cohomology with Coefficients in Discrete Torsion-Free Abelian Groups}

Throughout this subsection,
let $G$ be a profinite group,
and let all cohomology groups be continuous group cohomology with discrete coefficients.
We compute the cohomology of $G$ with coefficients in a torsion-free abelian group on which $G$ acts trivially.

\begin{lemma}
	Let $A$ be a discrete torsion-free abelian group on which $G$ acts trivially.
	Then the following statements hold:
	\begin{enumerate}
		\item $H^1(G,A)=0$.
		\item For every $n\geqq 1$,
		there is a natural isomorphism
		\[
			H^n(G,(A\otimes_{\mathbb{Z}}\mathbb{Q})/A)\cong H^{n+1}(G,A).
		\]
	\end{enumerate}
\end{lemma}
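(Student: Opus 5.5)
The plan is to use the short exact sequence of discrete trivial $G$-modules
\[
	0\rightarrow A\rightarrow A\otimes_{\mathbb Z}\mathbb Q\rightarrow (A\otimes_{\mathbb Z}\mathbb Q)/A\rightarrow 0,
\]
which is exact because $A$ is torsion-free, so $A\rightarrow A\otimes_{\mathbb Z}\mathbb Q$ is injective. All three modules are discrete with trivial action, so the sequence induces the usual long exact sequence in continuous cohomology. Both statements then follow once we know that $H^n(G,A\otimes_{\mathbb Z}\mathbb Q)=0$ for all $n\geqq 1$.

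The key step is this vanishing, for which I would use that $V:=A\otimes_{\mathbb Z}\mathbb Q$ is a discrete $\mathbb Q$-vector space.

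\begin{enumerate}
	\item Since $G$ is profinite and $V$ is discrete, $H^n(G,V)=\varinjlim_U H^n(G/U,V^U)$, the limit running over open normal subgroups $U$ of $G$. Here every $G/U$ is finite.
	\item For a finite group $Q$ and $n\geqq 1$, the group $H^n(Q,V)$ is killed by $|Q|$ (the restriction--corestriction argument).
	\item Multiplication by $|Q|$ is bijective on $V$, hence bijective on $H^n(Q,V)$, because multiplication by an integer on the coefficients induces multiplication by that integer on cohomology.
	\item A group on which $|Q|$ acts both as zero and bijectively is zero, so each $H^n(G/U,V)=0$ and therefore $H^n(G,V)=0$ for $n\geqq 1$.
\end{enumerate}

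Now read off the long exact sequence. For (1), the segment
\[
	H^0(G,V)\rightarrow H^0(G,V/A)\rightarrow H^1(G,A)\rightarrow H^1(G,V)=0
\]
shows that $H^1(G,A)$ is the cokernel of $V\rightarrow V/A$. Since the action is trivial, $H^0$ is just the module itself, and this map is the quotient map, which is surjective. Hence $H^1(G,A)=0$.

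For (2) with $n\geqq 1$, the segment
\[
	0=H^n(G,V)\rightarrow H^n(G,V/A)\xrightarrow{\delta} H^{n+1}(G,A)\rightarrow H^{n+1}(G,V)=0
\]
shows that the connecting homomorphism $\delta$ is an isomorphism. Naturality in $A$, for maps of torsion-free trivial modules, follows from the naturality of connecting homomorphisms.

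The only real obstacle is the vanishing of rational cohomology in the continuous, discrete-coefficient setting. This requires reducing to finite quotients through the direct-limit description of continuous cohomology with discrete coefficients, which I would cite as standard (e.g.\ Serre, \emph{Galois Cohomology}, I.2.2). The rest is formal.
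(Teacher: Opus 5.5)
Your proof is correct. For (2) it follows the paper's route: the same short exact sequence $0\to A\to A\otimes_{\mathbb{Z}}\mathbb{Q}\to(A\otimes_{\mathbb{Z}}\mathbb{Q})/A\to 0$, and the same use of $H^n(G,A\otimes_{\mathbb{Z}}\mathbb{Q})=0$ for $n\geqq 1$ in the long exact sequence. The paper only asserts this vanishing because $A\otimes_{\mathbb{Z}}\mathbb{Q}$ is a $\mathbb{Q}$-vector space. You actually prove it, by passing to finite quotients and using that $|Q|$ both kills and acts invertibly on $H^n(Q,V)$.

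For (1) your route differs. The paper argues directly: $H^1(G,A)=\operatorname{Hom}_{\mathrm{cts}}(G,A)$, and a continuous homomorphism from compact $G$ to discrete $A$ has finite image. That image is zero because $A$ is torsion-free. You instead read $H^1(G,A)=0$ off the long exact sequence, using that $V\to V/A$ is surjective on $H^0$ and that $H^1(G,V)=0$.

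The paper's argument for (1) is more elementary and does not depend on the rational vanishing. Yours obtains both parts from one mechanism. Note that your input $H^1(G,V)=0$ also follows from the paper's $\operatorname{Hom}$ argument, since $V$ is torsion-free.
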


\begin{proof}
	\begin{enumerate}
		\item Since the action of $G$ on $A$ is trivial,
		we have
		\[
			H^1(G,A)=\operatorname{Hom}_{\mathrm{cts}}(G,A).
		\]
		Let $f\in \operatorname{Hom}_{\mathrm{cts}}(G,A)$.
		Since $G$ is compact and $A$ is discrete,
		the image $f(G)$ is a finite subgroup of $A$.
		Since $A$ is torsion-free,
		it has no non-trivial finite subgroup.
		Hence $f(G)=0$,
		and therefore $f=0$.
		Thus
		\[
			H^1(G,A)=0.
		\]
		\item Since $A$ is torsion-free,
		the natural map
		\[
			A\hookrightarrow A\otimes_\mathbb{Z}\mathbb{Q}
		\]
		is injective.
		Hence we have a short exact sequence of discrete $G$-modules
		\[
			0\rightarrow A\rightarrow A\otimes_\mathbb{Z}\mathbb{Q}\rightarrow (A\otimes_\mathbb{Z}\mathbb{Q})/A\rightarrow 0.
		\]
		Since $A\otimes_\mathbb{Z}\mathbb{Q}$ is a $\mathbb{Q}$-vector space,
		we have
		\[
			H^n(G,A\otimes_\mathbb{Z}\mathbb{Q})=0~(n\geqq 1).
		\]
		The long exact sequence in cohomology associated with the above short exact sequence therefore yields,
		for every $n\geqq 1$,
		an isomorphism
		\[
			H^n(G,(A\otimes_\mathbb{Z}\mathbb{Q})/A)\cong H^{n+1}(G,A).
		\]
		\end{enumerate}
\end{proof}

The quotient $(A\otimes_\mathbb{Z}\mathbb{Q})/A$ is a torsion abelian group.
Hence it can be written as a direct limit of finite abelian groups:
\[
	(A\otimes_\mathbb{Z}\mathbb{Q})/A\cong \varinjlim_{i\in I}F_i.
\]
We obtain the following consequence.

\begin{lemma}
	Let $A$ be a torsion-free abelian group,
	and suppose that
	\[
		(A\otimes_\mathbb{Z}\mathbb{Q})/A\cong \varinjlim_{i\in I}F_i
	\]
	is a filtered direct limit of finite abelian groups.
	Assume that $G$ acts trivially on each $F_i$.
	Then,
	for every $n\geqq 1$,
	there is a natural isomorphism
	\[
		H^{n+1}(G,A)\cong \varinjlim_{i\in I}H^n(G,F_i).
	\]
\end{lemma}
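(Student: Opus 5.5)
The plan is to combine part (2) of the preceding lemma with the fact that continuous cohomology of a profinite group with discrete coefficients commutes with filtered direct limits of discrete modules. First, part (2) of the preceding lemma gives, for every $n\geqq 1$, a natural isomorphism
\[
	H^{n+1}(G,A)\cong H^n\bigl(G,(A\otimes_{\mathbb Z}\mathbb Q)/A\bigr).
\]
This isomorphism is the connecting homomorphism of the long exact sequence, so it is natural in $A$. The whole task therefore reduces to identifying $H^n(G,\varinjlim_i F_i)$ with $\varinjlim_i H^n(G,F_i)$, where the transition maps are those induced by the maps $F_i\to F_j$ of the filtered system.

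For the second step I would work with the standard continuous inhomogeneous cochain complex. Its terms are
\[
	C^n(G,M)=\operatorname{Map}_{\mathrm{cts}}(G^n,M)
\]
for a discrete $G$-module $M$. Since $G^n$ is compact and $M=\varinjlim_i F_i$ is discrete, a continuous map $\varphi:G^n\to M$ is locally constant with finite image. It therefore factors through a finite quotient $G^n\to G^n/U$, with $U$ open. Its finitely many values all lie in the image of a single $F_i$, because the system is filtered. The factorisation may not be injective, but two lifts of the finitely many values become equal at some later stage. This shows that the natural map
\[
	\varinjlim_i C^n(G,F_i)\to C^n(G,M)
\]
is an isomorphism for every $n$. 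The isomorphism is compatible with the differentials, because the differentials are built from the trivial action and the group structure, and these are preserved by the transition maps.

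Finally, filtered direct limits are exact in the category of abelian groups, so taking cohomology commutes with $\varinjlim$. This gives
\[
	H^n(G,M)\cong\varinjlim_i H^n(G,F_i).
\]
Composing with the first isomorphism yields the claim, and naturality follows because every map used is functorial in the coefficients.

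The main obstacle is the careful check that the cochain-level map is bijective, in particular injectivity. This is where compactness of $G^n$ is essential: a cochain is determined by finitely many values, and any finite set of relations in a filtered colimit is realised at a finite stage. One further point needs care. The $F_i$ need not embed into $M$, and the hypothesis that $G$ acts trivially on each $F_i$ ensures that the $F_i$ are $G$-modules. Triviality of the action on $M$ is automatic, since $A$ carries the trivial action. This makes the system $(F_i)$ a direct system of discrete $G$-modules, and is exactly what the argument requires.
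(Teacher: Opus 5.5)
Your proposal is correct and follows the paper's proof. Both first use part (2) of the preceding lemma to pass to $H^n(G,(A\otimes_{\mathbb Z}\mathbb Q)/A)$, and then use that continuous cohomology of a profinite group with discrete coefficients commutes with filtered direct limits. The paper simply cites this second fact, whereas you verify it at the cochain level: compactness of $G^n$ gives cochains finite image, filteredness gives surjectivity and injectivity, and exactness of filtered colimits finishes the argument, so your version is a sound expansion of the same proof.
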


\begin{proof}
	By the preceding lemma,
	we have
	\[
		H^{n+1}(G,A)\cong H^n(G,(A\otimes_\mathbb{Z}\mathbb{Q})/A).
	\]
	We obtain 
	\begin{align*}
		H^{n+1}(G,A)
		&\cong H^n(G,(A\otimes_\mathbb{Z}\mathbb{Q})/A)\\
		&\cong H^n(G,\varinjlim_{i\in I}F_i).
	\end{align*}
	Since continuous cohomology of profinite groups with discrete coefficients commutes with filtered direct limits in the coefficient module,
	we have
	\[
		H^n(G,\varinjlim_{i\in I}F_i)\cong \varinjlim_{i\in I}H^n(G,F_i).
	\]
	Combining these isomorphisms gives
	\[
		H^{n+1}(G,A)\cong \varinjlim_{i\in I}H^n(G,F_i).
	\]
\end{proof}

\subsection{Group Cohomology with Coefficients Equipped with the Profinite Topology}

Let $I$ be a countable directed set,
and let $A$ be the inverse limit of an inverse system $(A_i,\phi_{ji},I)$ of abelian groups.

The inverse limit functor is left exact.
Hence one can consider its right derived functors $\varprojlim^j$.
It is known that if the inverse system $(A_i)$ satisfies the Mittag-Leffler condition,
then
\[
	\varprojlim{}^j A_i=0
\]
for every $j\geqq 1$.

The following result is also known for cohomology:

\begin{lemma}\cite[Theorem 2.7.5]{cohomology-of-number-fields}
	Let $A$,
	as a $G$-module,
	be defined to be the inverse limit of finite discrete $G$-modules indexed by $\mathbb{N}$.
	Equip $A$ with the profinite topology.
	\begin{enumerate}
		\item For each $n\geqq 1$,
		there exists an exact sequence
		\[
		0\rightarrow {\varprojlim_{i}}^1H^{n-1}(G,A_i)\rightarrow H^n(G,A)\rightarrow \varprojlim_i H^n(G,A_i)\rightarrow 0.
		\]
		\item Suppose that $H^n(G,A_i)$ is finite for every $n\geqq 1$ and every $i$.
		Then,
		for every $n\geqq 1$,
		\[
			H^{n}(G,A)\cong \varprojlim_i H^{n}(G,A_i).
		\]
	\end{enumerate}
\end{lemma}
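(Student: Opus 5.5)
This lemma is the standard result cited from \cite[Theorem 2.7.5]{cohomology-of-number-fields}. The plan is to realize continuous cochains of $A$ as an inverse limit of cochain complexes, apply the Milnor $\varprojlim^1$ sequence to that tower, and then use a finiteness argument for part (2).

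\textbf{Step 1: cochains commute with the limit.} Since $A=\varprojlim_i A_i$ carries the profinite topology, a continuous map $G^n\to A$ is the same as a compatible family of continuous maps $G^n\to A_i$. Hence
\[
C^n(G,A)\cong\varprojlim_i C^n(G,A_i),
\]
compatibly with the coboundary maps. This gives an isomorphism of complexes $C^\bullet(G,A)\cong\varprojlim_i C^\bullet(G,A_i)$.

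\textbf{Step 2: the Milnor sequence.} I would show that each tower $(C^n(G,A_i))_i$ satisfies Mittag-Leffler, or at least is $\varprojlim$-acyclic.
\begin{itemize}
\item If the transition maps $A_j\to A_i$ are surjective, which we may assume after replacing each $A_i$ by the image of $A$, then every continuous map $G^n\to A_i$ lifts to $A_j$. The reason is that $G^n$ is profinite, so such a map factors through a finite discrete quotient, where a set-theoretic section works. Thus the cochain tower has surjective transition maps.
\item If instead one keeps the original $A_i$, the images stabilize because the $A_i$ are finite, and Mittag-Leffler still holds. This replacement is harmless: the images form a cofinal subsystem with the same inverse limits of cohomology, because finiteness makes the image system pro-isomorphic to the original one.
\end{itemize}
The Milnor exact sequence for a tower of complexes indexed by $\mathbb N$ with surjective transition maps (Stacks Project, the $R\varprojlim$ of a tower) then gives
\[
0\to\varprojlim{}^1 H^{n-1}(G,A_i)\to H^n\bigl(\varprojlim_i C^\bullet(G,A_i)\bigr)\to\varprojlim_i H^n(G,A_i)\to0.
\]
Combined with Step 1, this is exactly part (1).

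\textbf{Step 3: part (2).} If the groups $H^{n-1}(G,A_i)$ are finite, the tower $(H^{n-1}(G,A_i))_i$ is Mittag-Leffler, since descending chains of images in a finite group stabilize. Its $\varprojlim^1$ therefore vanishes. For $n\geqq 2$ this uses the hypothesis in degree $n-1\geqq 1$. For $n=1$ we need finiteness of $H^0(G,A_i)$, which holds because $H^0(G,A_i)=A_i^G\subseteq A_i$ and $A_i$ is finite. So $\varprojlim^1$ vanishes in every degree $n\geqq1$, and part (1) gives the isomorphism.

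\textbf{Main obstacle.} The delicate point is Step 1 together with the lifting claim in Step 2. One must check carefully that continuity into the profinite limit agrees with levelwise continuity. One must also handle surjectivity of the transition maps, since replacing $A_i$ by the images of $A$ could alter the individual terms $H^n(G,A_i)$ although not their limits. I would first try to reduce to surjective systems and verify that both $\varprojlim$ and $\varprojlim^1$ are unchanged under this pro-isomorphism.
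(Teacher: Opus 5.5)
Your proposal is correct and follows the standard argument behind the cited \cite[Theorem 2.7.5]{cohomology-of-number-fields}. The paper does not reprove this result, but it invokes the same key step in the proof of Proposition~\ref{proposition1-2}: the isomorphism of cochain complexes $C^\bullet(G,A)\cong\varprojlim_i C^\bullet(G,A_i)$, after which your Milnor $\varprojlim^1$ sequence and the vanishing of $\varprojlim^1$ for towers of finite groups give both parts, and your remark that the Mittag-Leffler property of the cochain tower, coming from finiteness of the $A_i$, already suffices covers the statement as written without assuming surjective transition maps.
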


\subsection{The Relationship between Torsion-Free Discrete Coefficients and Profinite Coefficients}

Let $A$ be the inverse limit of an inverse system $(A_i)$ of finite abelian groups,
and suppose that $G$ acts trivially on $A$.
Let $A^p$ denote $A$ endowed with the profinite topology,
and let $A^d$ denote $A$ endowed with the discrete topology.
Then the identity map
\[
	A^d \rightarrow A^p
\]
is continuous.
Hence it induces a map on group cohomology
\[
	H^n(G,A^d)\rightarrow H^n(G,A^p).
\]
and we study this map.

The following result shows that this map carries information only in the torsion part.

\begin{lemma}
	Assume that $A$ is torsion-free,
	and let $n\geqq 1$.
	The image of the map
	\[
		H^n(G,A^d)\rightarrow H^n(G,A^p)
	\]
	induced by the identity map $A^d\rightarrow A^p$ is contained in the torsion part,
	and this map is zero for $n=1$.
	In particular,
	if $H^n(G,A^p)$ is torsion-free for some $n\geqq 1$,
	this map is zero.
\end{lemma}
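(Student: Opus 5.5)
The plan is to use the previous lemmas on discrete torsion-free coefficients, together with naturality of the maps induced by the continuous identity $A^d\to A^p$.

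First, the case $n=1$. By part (1) of the lemma on discrete torsion-free coefficients, $H^1(G,A^d)=0$, since $A$ is torsion-free with trivial action. Hence the map $H^1(G,A^d)\to H^1(G,A^p)$ is zero. Its image is then trivially contained in the torsion part.

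For $n\geqq 2$, we show that every class in $H^n(G,A^d)$ is torsion. Then its image is torsion, because the induced map is a homomorphism of abelian groups.

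1. By part (2) of the same lemma, $H^n(G,A^d)\cong H^{n-1}(G,(A\otimes_{\mathbb Z}\mathbb Q)/A)$, with $(A\otimes_{\mathbb Z}\mathbb Q)/A$ discrete and torsion.
2. Write $(A\otimes_{\mathbb Z}\mathbb Q)/A=\varinjlim_i F_i$ with $F_i$ finite. By the direct-limit lemma, $H^n(G,A^d)\cong\varinjlim_i H^{n-1}(G,F_i)$.
3. Each $H^{n-1}(G,F_i)$ is killed by $|F_i|$, so the direct limit is a torsion group.

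This torsion argument can also be phrased directly, which avoids tracking the isomorphism. The connecting map $\delta$ is surjective onto $H^n(G,A^d)$ for $n\geqq 2$, because $H^n(G,A\otimes_{\mathbb Z}\mathbb Q)=0$. A class $c\in H^{n-1}(G,(A\otimes_{\mathbb Z}\mathbb Q)/A)$ is represented by a continuous cochain on the compact group $G$ with values in a discrete module. Such a cochain takes finitely many values, all torsion, so some $N\geqq 1$ kills it. Then $N\delta(c)=\delta(Nc)=0$.

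Finally, if $H^n(G,A^p)$ is torsion-free for some $n\geqq 1$, its torsion part is $0$, so the map is zero. The only step needing care is the reduction to finite-valued cochains, or equivalently the appeal to commutation with filtered direct limits. Both have already been established or are standard, so I do not expect a real obstacle.
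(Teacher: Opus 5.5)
Your proof is correct, and its skeleton matches the paper's. $H^1(G,A^d)=0$ handles $n=1$. For $n\geqq 2$, the image is torsion because the source is torsion, and this forces the map to vanish when $H^n(G,A^p)$ is torsion-free.

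The difference lies only in how the torsion of $H^n(G,A^d)$ is justified. The paper simply asserts that every element has finite order. This is the standard fact, used again later for the terms $E_2^{p,q}$ with $p>0$, that positive-degree continuous cohomology of a profinite group with discrete coefficients is torsion: $H^n(G,M)=\varinjlim_U H^n(G/U,M^U)$, and each term is killed by $|G/U|$. You instead dimension-shift along $0\to A\to A\otimes_{\mathbb Z}\mathbb Q\to(A\otimes_{\mathbb Z}\mathbb Q)/A\to 0$ and use that the new coefficient module is torsion. A continuous cochain with values in it is therefore finite-valued and killed by some integer $N$.

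The paper's route is one line and uniform in $n\geqq 1$, and it does not need torsion-freeness for the torsion claim. Yours uses only lemmas already recorded in the paper and makes explicit a step the paper leaves to the reader. It is not independent of the standard fact, though. The vanishing $H^n(G,A\otimes_{\mathbb Z}\mathbb Q)=0$ that you rely on rests on the same mechanism: finite-level cohomology is killed by the group order, while multiplication by that order is bijective on a $\mathbb Q$-vector space.
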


\begin{proof}
	For $n=1$,
	the assertion follows from $H^1(G,A^d)=0$.
	Since every element of $H^n(G,A^d)$ has finite order,
	the image is zero whenever $H^n(G,A^p)$ is torsion-free.
\end{proof}

Assume that $A$ is torsion-free, and put
\[
	S=\mathbb{Z}\backslash \{ 0\}.
\]
Let $(A\otimes_\mathbb{Z}\mathbb{Q})^p$ denote the topological group obtained by endowing
\[
	A\otimes_\mathbb{Z}\mathbb{Q}\cong S^{-1}A
\]
with natural topology for which $A^p$ is open.
Then there is a short exact sequence of topological groups
\[
	0\rightarrow A^p\xrightarrow{\iota} (A\otimes_\mathbb{Z}\mathbb{Q})^p\xrightarrow{\pi} (A\otimes_\mathbb{Z}\mathbb{Q})^p/A^p\rightarrow 0.
\]
Here $\iota$ is given by $a\mapsto a\otimes_\mathbb{Z}1$,
and $\pi$ is reduction modulo $A$.
The natural quotient topology on
\[
	(A\otimes_\mathbb{Z}\mathbb{Q})^p/A^p
\]
is discrete.
Since every map from a discrete space to an arbitrary topological space is continuous,
a choice of representatives gives a continuous set-theoretic section
\[
	(A\otimes_\mathbb{Z}\mathbb{Q})^p/A^p\rightarrow (A\otimes_\mathbb{Z}\mathbb{Q})^p.
\]
Therefore,
by \cite{cohomology-of-number-fields}, Lemma 2.7.2,
we obtain the long exact sequence
\[
	\cdots
	\rightarrow H^{n-1}(G,A\otimes_\mathbb{Z} \mathbb{Q}/A)
	\rightarrow H^n(G,A^p)
	\rightarrow H^n(G,(A\otimes_\mathbb{Z} \mathbb{Q})^p)
	\rightarrow H^n(G,A\otimes_\mathbb{Z} \mathbb{Q}/A)
	\rightarrow H^{n+1}(G,A^p)
	\rightarrow \cdots.
\]
Here $(A\otimes_\mathbb{Z}\mathbb{Q})/A$ is endowed with the discrete quotient topology.
Since,
for $n\geqq 2$,
\[
	H^{n-1}(G,(A\otimes_{\mathbb{Z}}\mathbb{Q})/A)\cong H^n(G,A^d),
\]
we obtain the following exact sequence for $n\geqq 2$:
\[
	\cdots
	\rightarrow H^{n}(G,A^d)
	\rightarrow H^n(G,A^p)
	\rightarrow H^n(G,(A\otimes_\mathbb{Z} \mathbb{Q})^p)
	\rightarrow H^{n+1}(G,A^d)
	\rightarrow H^{n+1}(G,A^p)
	\rightarrow \cdots.
\]

We now consider the compatibility of group cohomology with tensoring with $\mathbb{Q}$.
In general,
for an infinite profinite group $G$,
the natural comparison map
\[
	H^n(G,A^p)\otimes_\mathbb{Z}\mathbb{Q}\rightarrow H^n(G,(A\otimes_{\mathbb{Z}}\mathbb{Q})^p)
\]
need not be an isomorphism.
However,
for example,
when $A=\mathbb{Z}_p$ and $H^*(G,A)$ is degreewise finitely generated over $\mathbb{Z}_p$,
this comparison map is an isomorphism.
In such a situation,
the following holds.

\begin{proposition}\label{proposition1-1}
	Assume that $A$ is torsion-free.
	\begin{enumerate}
		\item Let $n\geqq 2$,
		and suppose that the natural comparison map
		\[
			H^n(G,A^p)\otimes_\mathbb{Z}\mathbb{Q}\cong H^n(G,(A\otimes_\mathbb{Z}\mathbb{Q})^p)
		\]
		is an isomorphism.
		Then the map
		\[
			H^n(G,A^d)\rightarrow H^n(G,A^p)
		\]
		induced by the identity map $A^d\rightarrow A^p$ is surjective onto $H^n(G,A^p)_\mathrm{tor}$.
		\item In addition to the hypothesis in $(1)$,
		suppose that the natural comparison map
		\[
			H^{n-1}(G,A^p)\otimes_\mathbb{Z} \mathbb{Q}\cong H^{n-1}(G,(A\otimes_\mathbb{Z} \mathbb{Q})^p)
		\]
		is an isomorphism.
		Then there is a split short exact sequence
		\[
			0\rightarrow H^{n-1}(G,A^p)\otimes_\mathbb{Z} \mathbb{Q}/\mathbb{Z}\rightarrow H^{n}(G,A^d)\rightarrow H^n(G,A^p)_{\mathrm{tor}}\rightarrow 0.
		\]
		In particular,
		\[
			H^n(G,A^d)\cong H^n(G,A^p)_{\mathrm{tor}} \oplus (H^{n-1}(G,A^p)\otimes_\mathbb{Z} \mathbb{Q}/\mathbb{Z}).
		\]
		\item Suppose that,
		for every $n\geqq 1$,
		the natural comparison map
		\[
			H^n(G,A^p)\otimes_\mathbb{Z}\mathbb{Q}\cong H^n(G,(A\otimes_\mathbb{Z}\mathbb{Q})^p)
		\]
		is an isomorphism,
		and suppose that $A$ is a profinite ring.
		For each $n\geqq 2$,
		identify
		\[
			Y^n:=H^{n-1}(G,A^p)\otimes_\mathbb{Z}\mathbb{Q}/\mathbb{Z}
		\]
		with the corresponding subgroup of $H^n(G,A^d)$ via the short exact sequence in $(2)$.
		Then,
		for every $m\geqq 1$,
		\[
			Y^n \cup H^m(G,A^d)=0,~H^m(G,A^d)\cup Y^n=0,
		\]
		Here,
		$X^n$ and $Y^n$ are regarded as subgroups of $H^*(G,A^d)$ via $(2)$,
		the cup product is taken in $H^*(G,A^d)$.
	\end{enumerate}
\end{proposition}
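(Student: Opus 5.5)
The whole argument runs through the long exact sequence for $n\geqq 2$ established just before the statement,
\[
	H^{n-1}(G,(A\otimes_\mathbb{Z}\mathbb{Q})^p)\xrightarrow{\delta} H^{n}(G,A^d)\xrightarrow{H^n(f)} H^n(G,A^p)\xrightarrow{\iota_*} H^n(G,(A\otimes_\mathbb{Z}\mathbb{Q})^p).
\]
For this I first check that the map $H^n(G,A^d)\to H^n(G,A^p)$ appearing there, obtained by composing the isomorphism $H^n(G,A^d)\cong H^{n-1}(G,(A\otimes\mathbb{Q})/A)$ with the connecting map of the profinite sequence, really is $H^n(f)$. This is a compatibility of connecting homomorphisms. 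The identity maps give a morphism from the discrete sequence $0\to A^d\to (A\otimes\mathbb{Q})^d\to (A\otimes\mathbb{Q})/A\to 0$ to the topological one, and the two quotient terms are literally equal. Naturality of the connecting map, computed on continuous cochains with the same set-theoretic section, then gives the identification. The same morphism of sequences identifies $\delta$ up to sign.

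For (1), the hypothesis identifies $\iota_*$ with the localization map $H^n(G,A^p)\to H^n(G,A^p)\otimes\mathbb{Q}$. Its kernel is exactly $H^n(G,A^p)_{\mathrm{tor}}$. By exactness, $\operatorname{im}H^n(f)=\ker\iota_*=H^n(G,A^p)_{\mathrm{tor}}$.

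For (2), I compute $\ker H^n(f)=\operatorname{im}\delta$, which is the cokernel of $\iota_*:H^{n-1}(G,A^p)\to H^{n-1}(G,(A\otimes\mathbb{Q})^p)$. Under the degree $n-1$ hypothesis this is $M\to M\otimes\mathbb{Q}$ with $M=H^{n-1}(G,A^p)$, and its cokernel is $M\otimes\mathbb{Q}/\mathbb{Z}$. To see this, tensor $0\to\mathbb{Z}\to\mathbb{Q}\to\mathbb{Q}/\mathbb{Z}\to 0$ with $M$ and use right exactness. The case $n=2$ needs $H^1$ in the hypothesis, but the same argument works. This gives the short exact sequence. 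It splits because $M\otimes\mathbb{Q}/\mathbb{Z}$ is divisible, hence injective as an abelian group.

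For (3), I use the description of $Y^n$ as $\operatorname{im}\delta$ together with the projection formula for connecting homomorphisms. Since $\delta$ comes from a sequence of $A$-modules, I have
\[
	\delta(z)\cup y=\delta(z\cup \bar y)
\]
up to sign, where $\bar y$ is the image of $y\in H^m(G,A^d)$ in $H^m(G,A^p)$ and the cup product is $H^{n-1}(G,(A\otimes\mathbb{Q})^p)\times H^m(G,A^p)\to H^{n-1+m}(G,(A\otimes\mathbb{Q})^p)$. I would verify this at the cochain level, again using a continuous section. Now take $z=\iota_*(w)\otimes q$. By (1), $\bar y$ is torsion, say $N\bar y=0$, so
\[
	z\cup\bar y=(\iota_*(w)\otimes q/N)\cup N\bar y=0,
\]
using $\mathbb{Q}$-linearity of the cup product with $(A\otimes\mathbb{Q})^p$. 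This is valid because $H^{*}(G,(A\otimes\mathbb{Q})^p)$ is a $\mathbb{Q}$-vector space and the pairing is $\mathbb{Z}$-bilinear. For $m=1$ the statement is trivial since $H^1(G,A^d)=0$. The other side follows symmetrically, or by graded commutativity.

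The main obstacle is establishing the cochain-level compatibilities: the identification of $H^n(f)$ with the map in the long exact sequence, and the projection formula for $\delta$ in the mixed topological setting. I would handle both by writing explicit continuous cochains with a fixed discrete section of $(A\otimes\mathbb{Q})^p\to(A\otimes\mathbb{Q})/A$.
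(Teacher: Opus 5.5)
Your proposal is correct. For (1) and (2) it is the paper's argument: the long exact sequence of $0\to A^p\to(A\otimes_\mathbb{Z}\mathbb{Q})^p\to(A\otimes_\mathbb{Z}\mathbb{Q})/A\to 0$; the identification $H^n(f)=\delta_p\circ\delta_d^{-1}$ by naturality of connecting maps along the identity morphism from the discrete sequence; the kernel of localization being the torsion subgroup; the cokernel of $M\to M\otimes_\mathbb{Z}\mathbb{Q}$ being $M\otimes_\mathbb{Z}\mathbb{Q}/\mathbb{Z}$; and splitting because divisible groups are injective.

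For (3) you take a longer route. The paper never leaves $H^*(G,A^d)$. By (2), $Y^n$ is divisible, and $H^m(G,A^d)$ is torsion for $m\geqq 1$: it is $X_m\oplus Y_m$, or simply positive-degree cohomology of a profinite group with discrete coefficients. The cup product is $\mathbb{Z}$-bilinear, so it factors through $Y^n\otimes_\mathbb{Z}H^m(G,A^d)=0$.

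You instead move the product to the profinite side via the projection formula $\delta(z)\cup y=\pm\delta(z\cup\bar y)$. You then apply the same divisible-against-torsion cancellation in $H^*(G,(A\otimes_\mathbb{Z}\mathbb{Q})^p)$. This is valid. The cochain check goes through because $\delta$ factors through the discrete connecting map $\delta_d$. Also, multiplication by a fixed element of $A$ is continuous on $(A\otimes_\mathbb{Z}\mathbb{Q})^p$, since $A^p$ is open there. You also do not need (1) to see that $\bar y$ is torsion, since $y$ already is.

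But the detour gains nothing and costs exactly the continuity verifications you flagged as the main obstacle. You already observed in (2) that $\operatorname{im}\delta\cong M\otimes_\mathbb{Z}\mathbb{Q}/\mathbb{Z}$ is divisible. So the cancellation applies directly to $\delta(z)\cup y$ with $y$ torsion, and the projection formula is unnecessary.
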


\begin{proof}
	\begin{enumerate}
		\item Let $n\geqq 2$.
		The long exact sequence may be written as
		\[
			\cdots
			\rightarrow H^{n-1}(G,(A\otimes_\mathbb{Z}\mathbb{Q})^p)
			\rightarrow H^{n}(G,A^d)
			\rightarrow H^n(G,A^p)
			\rightarrow H^n(G,A^p)\otimes_\mathbb{Z} \mathbb{Q}
			\rightarrow H^{n+1}(G,A^d)
			\rightarrow \cdots.
		\]
		The kernel of
		\[
			H^n(G,A^p)\rightarrow H^n(G,A^p)\otimes_\mathbb{Z}\mathbb{Q}
		\]
		is precisely $H^n(G,A^p)_{\mathrm{tor}}$,
		and by exactness this kernel agrees with the image of $H^n(G,A^d)$.
		
		It remains to verify that the map appearing here is the map induced by the identity map
		\[
			f:A^d\rightarrow A^p.
		\]
		Consider the following morphism of short exact sequences:
		\[
			\begin{tikzcd}
			0 \ar[r] 
			& A^d \ar[r] \ar[d]
			& (A\otimes_\mathbb{Z}\mathbb{Q})^d\ar[r] \ar[d]
			& (A\otimes_\mathbb{Z}\mathbb{Q})/A \ar[r] \ar[d,"="]
			& 0\\
			0 \ar[r] 
			& A^p \ar[r]
			& (A\otimes_\mathbb{Z}\mathbb{Q})^p\ar[r]
			& (A\otimes_\mathbb{Z}\mathbb{Q})/A \ar[r]
			& 0.
		\end{tikzcd}
		\]
		The vertical maps are the identity maps on the underlying groups and are continuous.
		Naturality of the connecting homomorphisms gives a commutative diagram
		\[
			\begin{tikzcd}
				H^{n-1}(G,(A\otimes_\mathbb{Z}\mathbb{Q})/A) \ar[r, "\delta_d",] \ar[d,"="]
					& H^n(G,A^d) \ar[d, "H^n(f)"]\\
				H^{n-1}(G,(A\otimes_\mathbb{Z}\mathbb{Q})/A) \ar[r, "\delta_p"]
					& H^n(G,A^p) 
			\end{tikzcd}
		\]
		Here $\delta_d$ is an isomorphism,
		since
		\[
			H^i(G,(A\otimes_{\mathbb{Z}}\mathbb{Q})^d)=0~(i\geqq 1).
		\]
		Hence
		\[
			H^n(f)=\delta_p\circ \delta_d^{-1},
		\]
		and the assertion follows.
		\item Consider the short exact sequence
		\[
			0\rightarrow \ker H^n(f)\rightarrow H^n(G,A^d) \rightarrow H^n(G,A^p)_{\mathrm{tor}}\rightarrow 0.
		\]
		By exactness,
		\[
			\ker H^n(f)\cong \operatorname{coker}\left(H^{n-1}(G,A^p)\rightarrow H^{n-1}(G,(A\otimes_\mathbb{Z}\mathbb{Q})^p)\right).
		\]
		Under the comparison isomorphism,
		the displayed map corresponds to the natural map
		\[
			H^{n-1}(G,A^p)\rightarrow H^{n-1}(G,A^p)\otimes_{\mathbb{Z}}\mathbb{Q}.
		\]
		Therefore
		\begin{align*}
			\ker H^n(f)
				&\cong \operatorname{coker} \left( H^{n-1}(G,A^p)\rightarrow H^{n-1}(G,(A\otimes_\mathbb{Z}\mathbb{Q})^p)\right)\\
				&\cong H^{n-1}(G,A^p)\otimes_\mathbb{Z} (\mathbb{Q}/\mathbb{Z})
		\end{align*}
		Thus the asserted short exact sequence follows.
		The left-side term is divisible,
		hence injective as a $\mathbb{Z}$-module,
		so the sequence splits.
		\item Let $n\geqq 2$,
		and let $m\geqq 1$.
		If $m=1$,
		then $H^1(G,A^d)=0$,
		so the assertion is immediate.
		Assume that $m\geqq 2$.
		Set
		\[
			X_m=H^m(G,A^p)_\mathrm{tor}.
		\]
		By $(2)$,
		\[
			H^m(G,A^d)\cong X_m\oplus Y_m.
		\]
		The cup product factors through the tensor product.
		Since $Y^n$ and $Y^m$ are divisible torsion groups and $X_m$ is a torsion group,
		\[
			Y^n\otimes_\mathbb{Z} X^m=0,~Y^n\otimes_\mathbb{Z} Y^m=0.
		\]
		Hence
		\[
			Y^n\cup H^m(G,A^d)=0.
		\]
		The same argument immediately gives
		\[
			H^m(G,A^d)\cup Y^n=0.
		\]
	\end{enumerate}
\end{proof}

We now consider the map
\[
	H^*(G,A^d)\rightarrow H^*(G,A^p)_{\mathrm{tor}}
\]
appearing in the preceding short exact sequences in positive degrees.
The natural projections $A\rightarrow A_i$ induce maps
\[
	H^*(G,A^d)\rightarrow H^*(G,A_i),
\]
and hence,
by the universal property of the inverse limit,
they determine a map
\[
	H^*(G,A^d)\rightarrow H^*(G,A^p).
\]
We then have the following proposition.

\begin{proposition}\label{proposition1-2}
	The map
	\[
		H^*(G,A^d)\rightarrow H^*(G,A^p)
	\]
	induced by the identity map $A^d\rightarrow A^p$ agrees with the map obtained from the universal property of the inverse limit.
\end{proposition}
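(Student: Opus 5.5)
The plan is to reduce the statement to the functoriality of continuous cohomology in the coefficient module, applied to the factorization of each projection. For every $i\in I$, let $p_i:A\to A_i$ be the canonical projection, and give $A_i$ the discrete topology. The projection $p_i$ is continuous both as a map $A^d\to A_i$ and as a map $A^p\to A_i$; the latter is continuous by the definition of the profinite (inverse-limit) topology. As maps of topological $G$-modules we have
\[
	p_i=p_i\circ f:A^d\xrightarrow{f}A^p\xrightarrow{p_i}A_i .
\]
Continuous cohomology is functorial for continuous $G$-equivariant homomorphisms of coefficient modules: it is computed by continuous inhomogeneous cochains, and postcomposition with a continuous map preserves continuity. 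Hence $H^n(p_i)\circ H^n(f)=H^n(p_i)$ as maps $H^n(G,A^d)\to H^n(G,A_i)$, for every $n$ and every $i$.

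Next I make precise what "the map obtained from the universal property" means, namely the map $\eta^n$ of the introduction. The maps $H^n(p_i):H^n(G,A^d)\to H^n(G,A_i)$ are compatible with the transition maps $H^n(\phi_{ij})$, because $\phi_{ij}\circ p_j=p_i$. They therefore induce a unique $\eta^n:H^n(G,A^d)\to\varprojlim_iH^n(G,A_i)$. Similarly, the canonical comparison map $\lambda^n:H^n(G,A^p)\to\varprojlim_iH^n(G,A_i)$ is by definition the map induced by the compatible family $H^n(p_i):H^n(G,A^p)\to H^n(G,A_i)$. Composing, $\lambda^n\circ H^n(f)$ has $i$-th component $H^n(p_i)\circ H^n(f)=H^n(p_i)$. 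By uniqueness in the universal property,
\[
	\eta^n=\lambda^n\circ H^n(f).
\]
This is the precise form of the statement.

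Finally, I interpret the statement as written, which identifies $H^*(G,A^p)$ with the inverse limit. Whenever $\lambda^n$ is an isomorphism, for instance under the finiteness hypothesis in part (2) of the cited lemma from \cite{cohomology-of-number-fields}, we get $H^n(f)=(\lambda^n)^{-1}\circ\eta^n$. So under this identification the two maps agree.

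The main obstacle, although a mild one, is to say clearly what the map "from the universal property" lands in. In general $H^n(G,A^p)$ is not itself the inverse limit: the $\varprojlim^1$ term may be nonzero. One therefore has to phrase the comparison through $\lambda^n$, as above, rather than claim an equality of maps into $H^n(G,A^p)$ directly. Once this is fixed, the only real input is that the cochain-level functoriality is compatible with the topologies involved, and that holds because all maps in question are continuous.
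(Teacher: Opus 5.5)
Your proof is correct, but it works at the level of cohomology, while the paper's proof works at the level of cochains.

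The paper takes the isomorphism $\phi\colon C^\bullet(G,A^p)\xrightarrow{\sim}\varprojlim_i C^\bullet(G,A_i)$ from the proof of Theorem~2.7.5 of Neukirch--Schmidt--Wingberg. This holds because a continuous cochain into $A^p$ is the same as a compatible family of continuous cochains into the $A_i$. It lets $\psi\colon C^\bullet(G,A^d)\to\varprojlim_i C^\bullet(G,A_i)$ be the cochain map given by the universal property. It proves $\phi\circ C^\bullet(f)=\psi$ by the same uniqueness argument you use. Taking cohomology then gives $H^*(f)=H^*(\phi^{-1}\circ\psi)$.

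Inverse limits commute with continuous cochains even though they need not commute with cohomology. So the paper's ``universal-property map'' lands in $H^*(G,A^p)$ itself, with no hypothesis on $\lambda^n$. The identity also holds already on cochains, which is convenient for the subsequent remark about cup products.

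Your identity $\eta^n=\lambda^n\circ H^n(f)$ follows from the paper's by composing with the natural map $\kappa\colon H^n(\varprojlim_i C^\bullet(G,A_i))\to\varprojlim_i H^n(G,A_i)$. Indeed $\lambda^n=\kappa\circ H^n(\phi)$ and $\eta^n=\kappa\circ H^n(\psi)$.

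Your route is more elementary. It needs only functoriality of continuous cohomology and uniqueness in $\varprojlim_i H^n(G,A_i)$, not the cochain-level isomorphism, and it proves exactly the formulation given in the introduction. Its limitation, which you correctly point out, is that the two maps can be compared as maps into $H^n(G,A^p)$ only when $\lambda^n$ is invertible.
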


\begin{proof}
	Let
	\[
		p_i:A^p\rightarrow A_i
	\]
	be the projection,
	and let
	\[
		\iota:A^d\rightarrow A^p
	\]
	be the identity map.
	Then the natural projection
	\[
		A^d\rightarrow A_i
	\]
	is equal to the composite $p_i\circ \iota$.
	
	By the proof of Theorem 2.7.5 of \cite{cohomology-of-number-fields},
	the canonical map
	\[
		\phi:C^\bullet (G,A^p)\cong \varprojlim_{i\in I}C^\bullet(G,A^p)
	\]
	induced by the maps
	\[
		C^\bullet(G,A^p)\rightarrow C^\bullet(G,A_i)
	\]
	is an isomorphism of cochain complexes.
	Moreover,
	$\phi$ is characterized by the property that,
	for every $i\in I$,
	\[
		\operatorname{pr}_i\circ \phi=C^\bullet(p_i),
	\]
	where
	\[
		\operatorname{pr}_i:\varprojlim_{i\in I}C^\bullet(G,A_i)\rightarrow C^\bullet(G,A_i)
	\]
	denotes the canonical projection.
	
	Now let
	\[
		\psi:C^\bullet(G,A^d)\rightarrow \varprojlim_{i\in I}C^\bullet(G,A_i)
	\]
	be the cochain map obtained from the universal property of the inverse limit,
	applied to the compatible family of cochain maps
	\[
		C^\bullet(p_i\circ \iota):C^\bullet(G,A^d)\rightarrow C^\bullet(G,A_i).
	\]
	For every $i\in I$,
	we have
	\begin{align*}
		\operatorname{pr}_i\circ \phi\circ C^\bullet(\iota)
			&=C^\bullet(p_i)\circ C^\bullet(\iota)\\
			&=C^\bullet(p_i\circ \iota).
	\end{align*}
	On the other hand,
	by the defining property of $\psi$,
	\[
		\operatorname{pr}_i\circ \psi=C^\bullet(p_i\circ \iota).
	\]
	Therefore
	\[
		\operatorname{pr}_i\circ \phi \circ C^\bullet(\iota)=\operatorname{pr}_i\circ\psi.
	\]
	By the uniqueness part of the universal property of the inverse limit,
	it follows that
	\[
		\phi\circ C^\bullet(\iota)=\psi.
	\]
	Hence 
	\[
		C^\bullet(\iota)=\phi^{-1}\circ \psi.
	\]
	Passing to cohomology,
	the map
	\[
		H^*(G,A^d)\rightarrow H^*(G,A^p)
	\]
	induced by the identity map $\iota:A^d\rightarrow A^p$ agrees with the map obtained from the universal property of the inverse limit.
\end{proof}

The map above is natural and compatible with cup products,
hence it is a homomorphism of graded rings.
We denote it by
\[
	H(\iota):H^*(G,A^d)\rightarrow H^*(G,A^p).
\]
For every degree $n\geqq 1$,
the image of
\[
	H(\iota)^n:H^n(G,A^d)\rightarrow H^n(G,A^p)
\]
is contained in the torsion subgroup $H^n(G,A^p)_{\operatorname{tor}}$.

Since $G$ acts trivially on $A^p$,
we have
\[
	H^1(G,A^p)=\operatorname{Hom}_{\mathrm{cont}}(G,A^p).
\]
This group is torsion-free,
and hence
\[
	H^1(G,A^p)_\mathrm{tor}=0
\]
Keeping this in mind,
degreewise isomorphisms and the information about cup products combine to give the following description of the cohomology ring.

\begin{corollary}\label{theorem2}
	Assume that $A$ has a structure of a profinite ring compatible with the structures introduced above.
	Assume also that,
	for every $n\geqq 1$,
	there is an isomorphism
	\[
		H^n(G,A^p)\otimes_\mathbb{Z}\mathbb{Q}\cong H^n(G,(A\otimes_\mathbb{Z}\mathbb{Q})^p).
	\]
	For every $n\geqq 0$,
	define
	\begin{align*}
		X^n=
		\begin{cases}
			H^0(G,A^p)\cong A & (n=0)\\
			H^n(G,A^p)_\mathrm{tor} & (n\geqq 1),
		\end{cases}\\
		Y^n=
		\begin{cases}
			0 & (n=0,1)\\
			H^{n-1}(G,A^p)\otimes_\mathbb{Z}(\mathbb{Q}/\mathbb{Z}) & (n\geqq 2).
		\end{cases}
	\end{align*}
	Let 
	\[
		\overline{H(\iota)}:H^*(G,A^d)\rightarrow X^*,~\iota:Y^*\rightarrow H^*(G,A^d)
	\]
	be the natural maps.
	Here $\overline{H(\iota)}$ is the following graded ring homomorphism: in degree $0$,
	it is the identity map,
	and in every degree $n\geqq 1$,
	it is $H(\iota)^n$ regarded as a map whose image lies in the torsion subgroup
	\[
		H^n(G,A^p)_\mathrm{tor}=X^n.
	\]
	For each $n\geqq 0$,
	choose and fix a section
	\[
		s^n:X^n\rightarrow H^n(G,A^d)
	\]
	of $\overline{H(\iota)}$,
	and put
	\[
		s=\bigoplus_{n\geqq 0}s^n:X^*\rightarrow H^*(G,A^d).
	\]
	For $n=0$,
	we take $s^0$ to be the identity map,
	and for $n=1$,
	we take the zero map
	\[
		X^1=0\rightarrow H^1(G,A^d).
	\]
	Define
	\[
		\phi_s:X^*\oplus Y^*\rightarrow H^*(G,A^d)~;~\phi_s(x,y)=s(x)+\iota(y).
	\]
	Then $\phi_s$ is the isomorphism of abelian groups obtained in Proposition \ref{proposition1-1}.
	Define a product $*$ on $X^*\oplus Y^*$ by
	\[
		(x_1+y_1)*(x_2,y_2)=(x_1\cup x_2,\operatorname{pr}_Y\circ \phi_s^{-1}\left( s(x_1)\cup \iota(y_2)+\iota(y_1)\cup s(x_2)+s(x_1)\cup s(x_2)-s(x_1\cup x_2)\right)),
	\]
	where $\operatorname{pr}_Y$ denotes the projection onto the $Y^*$-component.
	Then
	\[
		(X^*\oplus Y^*)\cong H^*(G,A^d)
	\]
	as graded rings.
	In particular,
	$H^*(G,A^d)$ is a square-zero extension of $X^*$ by $Y^*$.
\end{corollary}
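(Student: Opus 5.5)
We first check that $X^*$ is a graded subring of $H^*(G,A^p)$ and that $\overline{H(\iota)}$ is a surjective graded ring homomorphism. Closure of $X^*$ under cup product comes from bilinearity. If $x_1\in X^0=A$, then $x_1\cup x_2$ is just $A$-scalar multiplication of $x_2$, which preserves torsion. If both degrees are positive and $mx_1=0$, then $m(x_1\cup x_2)=0$. So $X^*$ is closed under $\cup$, and in particular $X^*\cup X^*\subset X^*$. The map $H(\iota)$ is multiplicative by naturality of the cup product along the continuous ring map $A^d\to A^p$; this is stated after Proposition~\ref{proposition1-2}. Its image lies in $X^*$: in degree $0$ both sides are $A$ and the map is the identity, and in positive degrees this is the lemma preceding Proposition~\ref{proposition1-1}. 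Surjectivity onto $X^n$ is trivial for $n=0$, follows from $X^1=0$ for $n=1$, and is Proposition~\ref{proposition1-1}(1) for $n\geqq 2$. The kernel is $0$ in degrees $0$ and $1$ (using $H^1(G,A^d)=0$), and for $n\geqq 2$ Proposition~\ref{proposition1-1}(2) identifies it with $Y^n$. Both comparison hypotheses are needed there, which is why we assume them for all $n\geqq 1$.

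Next, $\phi_s$ is an isomorphism of graded abelian groups. For each $n$ we have the exact sequence $0\to Y^n\xrightarrow{\iota} H^n(G,A^d)\xrightarrow{\overline{H(\iota)}} X^n\to 0$ with $s^n$ a set-theoretic section that is additive. By the standard splitting lemma, $(x,y)\mapsto s(x)+\iota(y)$ is then bijective. Additive sections exist because $Y^n$ is divisible, as in Proposition~\ref{proposition1-1}(2); in degrees $0$ and $1$ the prescribed choices work.

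For the product, we transport $\cup$ through $\phi_s$. We expand
\[
(s(x_1)+\iota(y_1))\cup(s(x_2)+\iota(y_2))
\]
bilinearly into four terms. By Proposition~\ref{proposition1-1}(3), $\iota(y_1)\cup\iota(y_2)=0$, since $Y^n\cup H^m(G,A^d)=0$ for $m\geqq1$ and $Y$ lives in degrees $\geqq 2$. We then rewrite the sum as
\[
s(x_1\cup x_2)+\bigl[s(x_1)\cup\iota(y_2)+\iota(y_1)\cup s(x_2)+s(x_1)\cup s(x_2)-s(x_1\cup x_2)\bigr].
\]
Applying $\overline{H(\iota)}$ and using multiplicativity, the bracket maps to
\[
x_1\cdot 0+0\cdot x_2+x_1\cup x_2-x_1\cup x_2=0,
\]
so the bracket lies in $\iota(Y^*)$. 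Hence $\phi_s^{-1}$ of the whole expression is $(x_1\cup x_2,\operatorname{pr}_Y\phi_s^{-1}(\text{bracket}))$, which is exactly the formula in the statement. So $\phi_s$ is a ring isomorphism by construction.

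For the square-zero claim, $Y^*$ is an ideal because it is a kernel, and $Y^*\cdot Y^*=0$ by the vanishing above. The step needing the most care is the degree-zero case, where Proposition~\ref{proposition1-1}(3) does not apply. There, however, $Y^0=0$, so any product with a $Y$-factor of degree $0$ is trivially zero. If instead the other factor has degree $0$, we have $s(x_1)\in A$ acting on $\iota(y_2)$; this stays in $Y$ since $Y$ is an ideal, and it is not required to vanish. This case is already accounted for by the term $s(x_1)\cup\iota(y_2)$ in the formula.
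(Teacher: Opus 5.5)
Your proposal is correct and takes essentially the same route as the paper. You transport the cup product through $\phi_s$, expand it bilinearly, and kill $\iota(y_1)\cup\iota(y_2)$ by the divisible-against-torsion argument of Proposition~\ref{proposition1-1}(3). You then use that $\overline{H(\iota)}$ is a ring homomorphism with kernel $\iota(Y^*)$ to place the remaining correction terms in $\iota(Y^*)$. Your extra checks fill in points the paper treats as part of the setup: closure of $X^*$ under $\cup$, multiplicativity, and surjectivity and kernel of $\overline{H(\iota)}$ degree by degree.
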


\begin{proof}
	By Proposition \ref{proposition1-1} (2),
	$\phi_s$ is an isomorphism of abelian groups.
	Let $\alpha,\beta\in X^*\oplus Y^*$,
	and write
	\[
		\alpha=x_1+y_1,~\beta=x_2+y_2.
	\]
	We define a product $*$ on $X^*\oplus Y^*$ via $\phi_s$ as follows:
	\[
		\alpha*\beta=\phi_s^{-1}(\phi_s(\alpha)\cup \phi_s(\beta)),
	\]
	where $\cup$ denotes the cup product on $H^*(G,A^d)$.
	It remains to compute this product explicitly.
	
	Since
	\[
		\phi_s(\alpha)=s(x_1)+\iota(y_1),~\phi_s(\beta)=s(x_2)+\iota(y_2),
	\]
	we have
	\begin{align*}
		\phi_s(\alpha)\cup \phi_s(\beta)
			&=(s(x_1)+\iota(y_1))\cup (s(x_2)+\iota(y_2))\\
			&=s(x_1)\cup s(x_2)+s(x_1)\cup \iota(y_2)+\iota(y_1)\cup s(x_2)+\iota(y_1)\cup \iota(y_2).
	\end{align*}
	Since $Y^0=Y^1=0$ and $Y^n~(n\geqq 2)$ is a divisible torsion group,
	the image of $Y^*$ under $\iota$ has the same property.
	Moreover,
	cup products factor through tensor products,
	and the tensor product of two divisible torsion groups is zero.
	Therefore
	\[
		\iota(y_1) \cup \iota(y_2)=0.
	\]
	Next,
	since $\overline{H(\iota)}$ is a homomorphism of graded rings and 
	\[
		\iota(Y^*)=\ker(\overline{H(\iota)}),
	\]
	we have
	\[
		\overline{H(\iota)}(s(x_1)\cup \iota(y_2))
			=\overline{H(\iota)}(s(x_1))\cup \overline{H(\iota)}(\iota(y_2))
			=x_1\cup 0
			=0.
	\]
	Hence 
	\[
		s(x_1)\cup \iota(y_2)\in \iota(Y^*).
	\]
	Similarly,
	\[
		\iota(y_1)\cup s(x_2)\in \iota(Y^*).
	\]
	Finally,
	we consider the term $s(x_1)\cup s(x_2)$.
	Since $\overline{H(\iota)}$ is a homomorphism of graded rings and $\overline{H(\iota)}\circ s=\mathrm{id}_{X^*}$,
	we have
	\[
		\overline{H(\iota)}(s(x_1)\cup s(x_2))=x_1\cup x_2.
	\]
	Hence the $X^*$-component of
	\[
		\phi^{-1}_s(s(x_1)\cup s(x_2))
	\]
	is $x_1\cup x_2$.
	
	The corresponding $Y^*$-component is obtained by subtracting the chosen lift $s(x_1\cup x_2)$ of this $X^*$-component.
	Indeed,
	\[
		\overline{H(\iota)}(s(x_1)\cup s(x_2)-s(x_1\cup x_2))=x_1\cup x_2-x_1\cup x_2=0,
	\]
	and hence
	\[
		s(x_1)\cup s(x_2)-s(x_1\cup x_2)\in \iota(Y^*).
	\]
	Therefore the $Y^*$-component of
	\[
		\phi^{-1}_s(s(x_1)\cup s(x_2))
	\]
	is
	\[
		\operatorname{pr}_Y\circ \phi^{-1}_s(s(x_1)\cup s(x_2)-s(x_1\cup x_2)).
	\]
	Combining this with the preceding discussion of the terms $s(x_1)\cup \iota(y_2)$ and $\iota(y_1)\cup s(x_2)$,
	we see that the $X^*$-component of $\alpha*\beta$ is $x_1\cup x_2$,
	while its $Y^*$-component is
	\[
		\operatorname{pr}_Y\circ \phi^{-1}_s((x_1)\cup \iota(y_2)+\iota(y_1)\cup s(x_2)+s(x_1)\cup s(x_2)-s(x_1\cup x_2)).
	\]
	Thus the product transported by $\phi_s$ is exactly the product $*$ defined above.
	Consequently,
	$\phi_s$ is an isomorphism of graded rings.
\end{proof}

\section{The Structure of $\operatorname{Ext}(R[G/K],-)$ with Coefficients in a Torsion-Free Profinite Ring}

\subsection{The Structure of the Derived Hecke Algebra as an Abelian Group}
Let $G$ be a locally profinite group,
let $K$ be an open compact subgroup of $G$,
and let $R$ be a torsion-free commutative ring.
Suppose that $G$ acts trivially on $R$.
Assume that there are finite discrete abelian groups $R_i$ such that we have an isomorphism of abelian groups
\[
	(R\otimes_\mathbb{Z}\mathbb{Q})/R\cong \varinjlim_{i\in I}R_i. 
\]
Then the following holds for the derived Hecke algebra $\mathscr{H}(G,K)_R$.

\begin{proposition}\label{proposition3}
	Suppose that $R$ is torsion-free and that $G$ acts trivially on $R$.
	Let $n$ be a nonnegative integer.
	Then we have the following isomorphisms as abelian groups:
	\begin{enumerate}
		\item If $n=0$,
		then
		\[
			\mathscr{H}^0(G,K)_R\cong \bigoplus_{x\in K\backslash G/K}R.
		\]
		\item If $n=1$,
		then
		\[
			\mathscr{H}^1(G,K)_R=0.
		\]
		\item If $n\geqq 2$ and each $R_i$ has the structure of a finite commutative ring,
		then
		\[
			\mathscr{H}^n(G,K)_R\cong \varinjlim_{i\in I}\mathscr{H}^{n-1}(G,K)_{R_i}.
		\]
	\end{enumerate}
\end{proposition}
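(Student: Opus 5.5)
The plan is to reduce everything to group cohomology of the compact open subgroups $K_x$ via the Ext description recalled in the introduction. That description gives an isomorphism of graded $R$-modules
\[
	\mathscr H^n(G,K)_R\cong\bigoplus_{x\in K\backslash G/K}H^n(K_x,R^d),
\]
valid for any commutative coefficient ring with trivial action. It comes from Frobenius reciprocity $\operatorname{Ext}^n_{\mathrm{SM}_R(G)}(R[G/K],M)\cong H^n(K,M)$, together with the Mackey decomposition of $R[G/K]|_K$ as $\bigoplus_x \operatorname{ind}_{K_x}^K R$ and Shapiro's lemma. The direct sum, rather than a product, appears because $K\backslash G/K$-indexed compact induction commutes with cohomology of the profinite group $K$ on discrete modules.

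Once this is in place, each of the three assertions becomes a statement about each $H^n(K_x,R^d)$, summed over $x$.

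For (1), I will use $H^0(K_x,R^d)=R$, since the action is trivial.

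For (2), I will apply the earlier lemma on discrete torsion-free coefficients to each $K_x$. It gives $H^1(K_x,R^d)=0$, because continuous homomorphisms from a compact group to a torsion-free discrete group vanish. Hence $\mathscr H^1(G,K)_R=0$.

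For (3), with $n\geqq 2$, I will invoke the subsequent lemma for each profinite group $K_x$. This yields natural isomorphisms
\[
	H^n(K_x,R^d)\cong H^{n-1}\bigl(K_x,(R\otimes_{\mathbb Z}\mathbb Q)/R\bigr)\cong\varinjlim_{i}H^{n-1}(K_x,R_i),
\]
where the first map is the connecting map and the second uses the fact that cohomology with discrete coefficients commutes with filtered colimits. Summing over $x$ and interchanging the direct sum with the filtered colimit (both are colimits) gives
\[
	\bigoplus_x\varinjlim_i H^{n-1}(K_x,R_i)\cong\varinjlim_i\bigoplus_x H^{n-1}(K_x,R_i).
\]
Since each $R_i$ is a finite commutative ring, the double-coset description applies again with coefficients $R_i$ and identifies the inner sum with $\mathscr H^{n-1}(G,K)_{R_i}$.

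The main subtlety is compatibility of transition maps. The maps $R_i\to R_j$ in the direct system are only additive, not ring maps, so the right-hand colimit must be defined through the group-cohomological description, as the statement stipulates. I will define the transition maps that way, so that the identification is tautological. The only point to check is that the isomorphism $\mathscr H^{n-1}(G,K)_{R_i}\cong\bigoplus_x H^{n-1}(K_x,R_i)$ depends only on the additive structure of $R_i$. It does, because the fixed representatives $g_x$ and the Mackey and Shapiro isomorphisms are functorial in the abelian coefficient group. Naturality of the connecting homomorphism in $x$ is not needed, since everything is done summand by summand.
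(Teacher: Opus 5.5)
Your proposal is correct and follows the paper's proof essentially step by step. You reduce to $\bigoplus_x H^n(K_x,R^d)$, use $H^0=R$ and the vanishing of $H^1$ for discrete torsion-free coefficients, and for $n\geqq 2$ combine the connecting-map isomorphism with the commutation of cohomology with filtered colimits before interchanging $\bigoplus_x$ with $\varinjlim_i$. Your remarks on why the double-coset description gives a direct sum, and on defining the transition maps through group cohomology, fill in details the paper leaves implicit without changing the argument.
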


\begin{proof}
	For any $n$,
	we have
	\[
		\mathscr{H}^n(G,K)_R\cong \bigoplus_{x\in K\backslash G/K}H^n(K_x,R).
	\]
	Hence,
	if $n=0$,
	then $H^0(K_x,R)=R$.
	If $n=1$,
	then since $R$ is torsion-free,
	we have $H^1(K_x,R)=0$.
	If $n\geqq 2$ and each $R_i$ has the structure of a finite commutative ring,
	then for each $x\in K\backslash G/K$ we have
	\[
		H^n(K_x,R)\cong \varinjlim_{i\in I}H^{n-1}(K_x,R_i).
	\]
	Therefore,
	noting that direct limits commute with direct sums,
	we obtain
	\begin{align*}
		\mathscr{H}^n(G,K)_R
			&\cong \bigoplus_{x\in K\backslash G/K}H^n(K_x,R)\\
			&\cong \bigoplus_{x\in K\backslash G/K}\varinjlim_{i\in I}H^{n-1}(K_x,R_i)\\
			&\cong \varinjlim_{i\in I}\bigoplus_{x\in K\backslash G/K}H^{n-1}(K_x,R_i)\\
			&\cong \varinjlim_{i\in I}\mathscr{H}^{n-1}(G,K)_{R_i}.
	\end{align*}
\end{proof}

We also obtain the following description from group cohomology with coefficients endowed with the profinite topology.

\begin{proposition}\label{proposition4}
	Let $R$ be a torsion-free profinite ring,
	and let $R^p$ denote $R$ endowed with the profinite topology.
	Suppose moreover that,
	for any $n\geqq 1$ and any $x\in K\backslash G/K$,
	we have
	\[
		H^n(K_x,R^p)\otimes_\mathbb{Z}\mathbb{Q}\cong H^n(K_x,R^p\otimes_\mathbb{Z}\mathbb{Q}).
	\]
	Then,
	for any $m\geqq 2$,
	there exists an isomorphism of abelian groups
	\[
		\mathscr{H}^m(G,K)_R
			\cong	\bigoplus_{x\in K\backslash G/K}
				\left(
					(H^{m-1}(K_x,R^p)\otimes_\mathbb{Z} (\mathbb{Q}/\mathbb{Z}))\oplus H^m(K_x,R^p)_{\mathrm{tor}}
				\right).
	\]
	Moreover,
	using the notation of Corollary \ref{theorem2},
	for $x\in K\backslash G/K$,
	let $X^\bullet_x$ and $Y^\bullet_x$ be the corresponding subgroup of $H^*(K_x,R^p)$ and the object obtained by tensoring,
	respectively.
	Then,
	as graded abelian groups,
	\[
		\mathscr{H}(G,K)_R
			\cong \bigoplus_{x\in K\backslash G/K}
			\left(
				X^\bullet_x\oplus Y^\bullet_x
			\right).
	\]
\end{proposition}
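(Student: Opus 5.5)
The plan is to reduce to the double-coset decomposition and then apply Proposition \ref{proposition1-1} to each open compact subgroup $K_x$ separately. First, recall the isomorphism of graded $R$-modules
\[
	\mathscr H(G,K)_R\cong\bigoplus_{x\in K\backslash G/K}H^*(K_x,R^d),
\]
already used in the proof of Proposition \ref{proposition3}. Here $R$ is regarded as a discrete ring, so the coefficients are $R^d$. Since each $K_x=K\cap g_xKg_x^{-1}$ is an open subgroup of the profinite group $K$, it is itself profinite. Hence all results of the previous section apply with $G$ replaced by $K_x$ and $A$ replaced by $R$, with trivial action.

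Next, fix $x$ and $m\geqq 2$. The hypothesis gives the rationalization isomorphism for $K_x$ in every degree $n\geqq 1$, in particular in degrees $m$ and $m-1\geqq 1$. Thus Proposition \ref{proposition1-1}(2) applies and yields a split short exact sequence
\[
	0\rightarrow H^{m-1}(K_x,R^p)\otimes_{\mathbb Z}\mathbb Q/\mathbb Z\rightarrow H^m(K_x,R^d)\rightarrow H^m(K_x,R^p)_{\mathrm{tor}}\rightarrow0 .
\]
The splitting comes from divisibility of the left term. Taking the direct sum over $x$ gives the first displayed isomorphism. Direct sums of isomorphisms are isomorphisms, and the splittings may be chosen independently for each $x$; only abelian groups are claimed, so no compatibility is needed.

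For the graded statement, I treat the low degrees separately using the conventions of Corollary \ref{theorem2}. In degree $0$, $H^0(K_x,R^d)=R=X^0_x$ and $Y^0_x=0$. In degree $1$, $H^1(K_x,R^d)=0$ by the lemma on discrete torsion-free coefficients. Also $X^1_x=H^1(K_x,R^p)_{\mathrm{tor}}=0$, since $\operatorname{Hom}_{\mathrm{cont}}(K_x,R^p)$ is torsion-free, and $Y^1_x=0$ by definition. In degrees $\geqq 2$ we use the first part. Assembling degreewise, the isomorphism $\phi_s$ of Corollary \ref{theorem2} applied to $K_x$, summed over $x$, gives the graded isomorphism.

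The main subtlety is the hypothesis. It is stated with $R^p\otimes_{\mathbb Z}\mathbb Q$, which must be read as $(R\otimes_{\mathbb Z}\mathbb Q)^p$ with the topology making $R^p$ open, exactly as in Proposition \ref{proposition1-1}. One should also confirm that the long exact sequence used there is valid for each $K_x$. This holds because it only needs $K_x$ to be profinite and the coefficient sequence to admit a continuous section; the section exists because the quotient $(R\otimes_{\mathbb Z}\mathbb Q)/R$ is discrete. Beyond this check, the argument is bookkeeping.
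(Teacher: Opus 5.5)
Your proposal is correct and follows essentially the same route as the paper: decompose $\mathscr H(G,K)_R$ over double cosets as $\bigoplus_x H^*(K_x,R^d)$, apply the splitting from the preceding section to each profinite $K_x$ in degrees $m\geqq 2$, treat degrees $0$ and $1$ separately, and take direct sums. The paper cites Corollary~\ref{theorem2} for the splitting, while you cite Proposition~\ref{proposition1-1}(2) directly, which is the result that corollary rests on. Your additional checks on the topology of $R\otimes_{\mathbb Z}\mathbb Q$ and on the long exact sequence for each $K_x$ are appropriate but do not change the argument.
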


\begin{proof}
	Considering the description by group cohomology
	\[
		\mathscr{H}^*(G,K)_R\cong \bigoplus_{x\in K\backslash G/K}H^*(K_x,R^d),
	\]
	by Corollary \ref{theorem2},
	for $m\geqq 2$ and for each $x$,
	we have
	\[
		H^m(K_x,R^d)\cong (H^{m-1}(K_x,R^p)\otimes_{\mathbb{Z}}(\mathbb{Q}/\mathbb{Z}))\oplus H^m(K_x,R^p)_{\mathrm{tor}}.
	\]
	Moreover,
	if $m=1$,
	then for each $x\in K\backslash G/K$ we have
	\[
		H^1(K_x,R^d)=0
	\]
	and $X^1_x=Y^1_x=0$.
	If $m=0$,
	then
	\[
		X^0_x=H^0(K_x,R^d)=R^d,~Y^0_x=0.
	\]
	Hence we obtain the desired isomorphism.
\end{proof}

\subsection{The action of the derived Hecke algebra with coefficients in a torsion-free profinite ring}

We next consider the action of the derived Hecke algebra with coefficients in a torsion-free profinite ring.
Let $L^\bullet$ be a complex in $\mathrm{SM}_R(G)$.
Let $I^\bullet$ be a K-injective resolution of $R[G/K]$,
and let $I_L^\bullet$ be a K-injective resolution of $L^\bullet$.
We define the action of $\operatorname{Ext}^*_{\mathrm{SM}_R(G)}(R[G/K],R[G/K])$ on $\operatorname{Ext}^*_{\mathrm{SM}_R(G)}(R[G/K],L^\bullet)$ to be the action induced by composition of morphisms:
\begin{align*}
	\operatorname{Hom}^*_{\mathrm{SM}_R(G)}(I^\bullet,I_L^\bullet)\times \operatorname{Hom}^*_{\mathrm{SM}_R(G)}(I^\bullet,I^\bullet)
		&\rightarrow \operatorname{Hom}^*_{\mathrm{SM}_R(G)}(I^\bullet,I_L^\bullet)\\
	(f,g)
		&\mapsto f\circ g.
\end{align*}
By this definition,
the action on Ext is bilinear,
and hence factors through the tensor product.
Namely,
the following diagram commutes: 
\[
	\begin{tikzcd}
		\operatorname{Ext}^*_{\mathrm{SM}_R(G)}(R[G/K],L^\bullet) \times \operatorname{Ext}^*_{\mathrm{SM}_R(G)}(R[G/K],R[G/K]) \ar[r] \ar[d] &
			\operatorname{Ext}^*_{\mathrm{SM}_R(G)}(R[G/K],L^\bullet)\\
		\operatorname{Ext}^*_{\mathrm{SM}_R(G)}(R[G/K],L^\bullet)\otimes_{\mathbb{Z}} \operatorname{Ext}^*_{\mathrm{SM}_R(G)}(R[G/K],R[G/K]) \ar[ur] &
			{}
	\end{tikzcd}.
\]
By the preceding discussion,
the divisible part acts trivially on the torsion part of
\[
	\operatorname{Ext}^*_{\mathrm{SM}_R(G)}(R[G/K],L^\bullet).
\]

Therefore,
in order to understand how much the action simplifies,
we study the structure of 
\[
	\operatorname{Ext}^*_{\mathrm{SM}_R(G)}(R[G/K],L^\bullet),
\]
especially its torsion property.
For simplicity,
we assume that $L^\bullet$ is bounded below;
more precisely,
we assume that
\[
	L^n=0~(n<0).
\]

We first consider a description by means of a filtered colimit.
By Shapiro's lemma,
\[
	\operatorname{Ext}^*_{\mathrm{SM}_R(G)}(R[G/K],L^\bullet)\cong \operatorname{Ext}^*_{\mathrm{SM}_R(K)}(R,L^\bullet)
\]
and this is equal to the group hypercohomology
\[
	\mathbb{H}(K,L^\bullet).
\]
Since $L^\bullet$ is a smooth representation,
we may write
\[
L^\bullet=\bigcup_{U}(L^\bullet)^U=\varinjlim_U (L^\bullet)^U.
\]
Here $U$ runs over all open normal subgroups of $K$.

Similarly,
for the continuous cochain complex $C^*_{\operatorname{cts}}(K,-)$,
for every $p,q$ we have
\[
	C_{\operatorname{cts}}^p(K,L^q)\cong \varinjlim_U C^p(K/U,(L^q)^U).
\]
Since filtered colimits are exact,
we obtain
\begin{align*}
	\operatorname{Ext}^n_{\mathrm{SM}_R(K)}(R,L^\bullet)
		&\cong \mathbb{H}^n(K,L^\bullet)\\
		&\cong \varinjlim_U \mathbb{H}^n(K/U,(L^\bullet)^U).
\end{align*}

We next compute using a spectral sequence.
The group hypercohomology $\mathbb{H}(K,L^\bullet)$ is expressed by the spectral sequence
\begin{align}
	E_2^{p,q}=H^p(K,h^q(L^\bullet))\Rightarrow \mathbb{H}^{p+q}(K,L^\bullet),
\end{align}
where $h(L^\bullet)$ denotes the cohomology of $L^\bullet$.

In a more general situation,
considering this spectral sequence gives the following exact sequence.
It also shows that,
in order to study torsion properties,
it is enough to understand the cohomology of $L^\bullet$.

\begin{proposition}
	Let $R$ be an arbitrary commutative ring and let $K$ be a profinite group.
	Let $L^\bullet$ be a complex of smooth $R[K]$-modules satisfying
	\[
	L^n=0~(n<0),
	\]
	and let $h(L^\bullet)$ denote the cohomology of $L^\bullet$.
	Then,
	for every $n\geqq 0$,
	there is a short exact sequence
	\[
		0
			\rightarrow F^1 \mathbb{H}^n(K,L^\bullet)
			\rightarrow \mathbb{H}^n(K,L^\bullet)
			\rightarrow E_\infty^{0,n}
			\rightarrow 0.
	\]
	Here $F^\bullet \mathbb{H}^n(K,L^\bullet)$ denotes the filtration obtained from the spectral sequence (1).
	Then $F^1 \mathbb{H}^n(K,L^\bullet)$ is a torsion group.
	Moreover we have $E_\infty^{0,n}\subset E_2^{0,n}=h^n(L^\bullet)^K$.
	In particular,
	if $E_\infty^{0,n}$ is a torsion group for some $n\geqq 0$,
	then $\mathbb{H}^n(K,L^\bullet)$ is a torsion group.
\end{proposition}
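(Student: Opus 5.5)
The plan is to take the hypercohomology spectral sequence (1), $E_2^{p,q}=H^p(K,h^q(L^\bullet))\Rightarrow\mathbb{H}^{p+q}(K,L^\bullet)$, together with its finite decreasing filtration $F^\bullet\mathbb{H}^n$. Since $L^n=0$ for $n<0$, the sequence is concentrated in the first quadrant $p,q\geqq 0$. Hence for each $n$ the filtration is finite:
\[
\mathbb{H}^n=F^0\supseteq F^1\supseteq\cdots\supseteq F^{n+1}=0,
\qquad F^p/F^{p+1}\cong E_\infty^{p,n-p}.
\]
The short exact sequence $0\to F^1\to\mathbb{H}^n\to E_\infty^{0,n}\to0$ is then just the $p=0$ graded piece. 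Moreover $E_\infty^{0,n}$ is obtained from $E_2^{0,n}$ by successively taking kernels of the outgoing differentials $d_r:E_r^{0,n}\to E_r^{r,n-r+1}$. Nothing enters the column $p=0$, because $p<0$ vanishes. So $E_\infty^{0,n}\subset E_2^{0,n}=H^0(K,h^n(L^\bullet))=h^n(L^\bullet)^K$.

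The main step is to show that $F^1\mathbb{H}^n$ is torsion. Since torsion groups are closed under extensions and the filtration is finite, it suffices to show that each $E_\infty^{p,n-p}$ with $p\geqq1$ is torsion. Each such term is a subquotient of $E_2^{p,n-p}=H^p(K,h^{n-p}(L^\bullet))$, and subquotients of torsion groups are torsion. So I reduce to the claim that $H^p(K,M)$ is torsion for every $p\geqq1$ and every discrete (smooth) $K$-module $M$. Here $M=h^q(L^\bullet)$ is smooth, being a subquotient of the smooth module $L^q$.

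For that claim I will use the colimit description already used above, $H^p(K,M)\cong\varinjlim_U H^p(K/U,M^U)$. For a finite group $K/U$ and $p\geqq1$, the group $H^p(K/U,M^U)$ is killed by $|K/U|$, by the standard restriction–corestriction argument to the trivial subgroup. A filtered colimit of torsion groups is torsion, so $H^p(K,M)$ is torsion. The final assertion then follows because an extension of the torsion group $E_\infty^{0,n}$ by the torsion group $F^1$ is torsion.

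The one point I would take care with is the justification of the spectral sequence for continuous hypercohomology of a bounded-below complex of discrete modules. I would realize it via a Cartan–Eilenberg injective resolution in $\mathrm{SM}_R(K)$, or via the double complex $C^p_{\mathrm{cts}}(K,L^q)$ filtered by $p$. First-quadrant boundedness guarantees convergence and finiteness of the filtration in each degree. I expect no real obstacle beyond checking that the identification of the $E_2$-term uses exactness of $C^p_{\mathrm{cts}}(K,-)$ on discrete modules, which holds since $C^p_{\mathrm{cts}}(K,M)=\varinjlim_U\operatorname{Map}((K/U)^p,M)$.
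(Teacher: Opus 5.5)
Your proposal is correct and follows essentially the same route as the paper. First-quadrant convergence gives the finite filtration, with the short exact sequence as its $p=0$ graded piece, and $F^1\mathbb{H}^n(K,L^\bullet)$ is torsion because each $E_\infty^{p,n-p}$ with $p\geqq 1$ is a subquotient of the torsion group $H^p(K,h^{n-p}(L^\bullet))$. You also justify two points the paper only asserts: that positive-degree cohomology of a profinite group with discrete coefficients is torsion (via finite quotients and restriction--corestriction), and that $E_\infty^{0,n}\subset E_2^{0,n}$ because no differentials enter the column $p=0$.
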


\begin{proof}
	By assumption,
	the spectral sequence (1)
	is concentrated in the first quadrant $(p,q\geqq 0)$.
	Therefore,
	for each $n$,
	we obtain a finite filtration
	\[
		\mathbb{H}^{n}(K,L^\bullet)=F^0\mathbb{H}^n(K,L^\bullet)\supset F^1\mathbb{H}^n(K,L^\bullet)\supset \cdots \supset F^{n+1}\mathbb{H}^n(K,L^\bullet)=0,
	\]
	such that,
	for every $p,q$,
	\[
	\operatorname{gr}_p\mathbb{H}^{p+q}(K,L^\bullet)=F^p\mathbb{H}^{p+q}(K,L^\bullet)/F^{p+1}\mathbb{H}^{p+q}(K,L^\bullet)\cong E_\infty^{p,q}.
	\]
	If $p>0$,
	then since $K$ is profinite,
	each $E_2^{p,q}$ is a torsion group.
	The term $E_\infty^{p,q}$ is obtained from $E_2^{p,q}$ by repeatedly taking subquotients.
	Since the torsion property is preserved under subquotients,
	$E_\infty^{p,q}$ is also torsion for $p>0$.
	
	When $n=0$,
	the spectral sequence is concentrated in the first quadrant,
	so
	\[
		F^1 \mathbb{H}^0(K,L^\bullet)=0.
	\]
	Thus the assertion is clear in this case.
	Now let $n>0$.
	For $\mathbb{H}^n(K,L^\bullet)$,
	we have
	\begin{align*}
		E^{0,n}_\infty&\cong  F^0\mathbb{H}^n(K,L^\bullet)/F^1\mathbb{H}^n(K,L^\bullet)\\
		E^{1,n-1}_\infty&\cong F^1\mathbb{H}^n(K,L^\bullet)/F^2\mathbb{H}^n(K,L^\bullet)\\
		\vdots\\
		E^{n,0}_\infty&\cong F^{n}\mathbb{H}^n(K,L^\bullet)/F^{n+1}\mathbb{H}^n(K,L^\bullet)=F^n\mathbb{H}^n(K,L^\bullet).
	\end{align*}
	Since $E_\infty^{n,0}$ is torsion,
	$F^n \mathbb{H}^n(K,L^\bullet)$ is torsion.
	Also,
	\[
		E^{n-1,1}_\infty\cong F^{n-1}\mathbb{H}^n(K,L^\bullet)/F^n\mathbb{H}^n(K,L^\bullet)
	\]
	and both $E_\infty^{n-1,1}$ and $F^n \mathbb{H}^n(K,L^\bullet)$ are torsion.
	Thus $F^{n-1} \mathbb{H}^n(K,L^\bullet)$ is torsion.
	Repeating this argument,
	we conclude that 
	\[
		F^1 \mathbb{H}^n(K,L^\bullet)
	\]
	is torsion.
	
	For $p=0$,
	we have
	\[
		E^{0,n}_\infty\cong F^0\mathbb{H}^n(K,L^\bullet)/F^1\mathbb{H}^n(K,L^\bullet).
	\]
	Hence we obtain the short exact sequence
	\[
		0	\rightarrow F^1 \mathbb{H}^n(K,L^\bullet)
			\rightarrow \mathbb{H}^n(K,L^\bullet)
			\rightarrow 	E^{0,n}_\infty 
			\rightarrow 0.
	\]
	If $E_\infty^{0,n}$ is torsion,
	then both ends of this short exact sequence are torsion groups.
	Therefore the middle term is also torsion.
\end{proof}

Thus,
for elements of $\mathbb{H}^q(K,L^\bullet)$,
the action of the divisible part factors through
\[
	\mathbb{H}^q(K,L^\bullet)/F^1 \mathbb{H}^q(K,L^\bullet)\cong E^{0,q}_\infty.
\]
Combining these observations,
we obtain the following.

\begin{corollary}
	If $h^i(L^\bullet)^K$ is torsion,
	then the divisible part of
	\[
		\operatorname{Ext}^*_{\operatorname{SM}_R(G)}(R[G/K],R[G/K])
	\]
	acts trivially on
	\[
		\operatorname{Ext}^i_{\operatorname{SM}_R(G)}(R[G/K],L^\bullet).
	\]
\end{corollary}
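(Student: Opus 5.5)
We combine the preceding proposition with the Shapiro-lemma identification and the fact that the action factors through the tensor product. Fix $i$. By Shapiro's lemma,
\[
	\operatorname{Ext}^i_{\operatorname{SM}_R(G)}(R[G/K],L^\bullet)\cong\mathbb{H}^i(K,L^\bullet),
\]
and the spectral sequence (1) has $E_\infty^{0,i}\subset E_2^{0,i}=h^i(L^\bullet)^K$. Since a subgroup of a torsion group is torsion, the hypothesis gives that $E_\infty^{0,i}$ is torsion. The preceding proposition then says that $\mathbb{H}^i(K,L^\bullet)$ is a torsion group.

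Next, let $D$ be an element of the divisible part of $\operatorname{Ext}^*_{\operatorname{SM}_R(G)}(R[G/K],R[G/K])$; it suffices to treat homogeneous $D$. Take any $\xi\in\operatorname{Ext}^i(R[G/K],L^\bullet)$, and let $N\ge1$ be such that $N\xi=0$. Because the divisible part is a divisible subgroup, we can write $D=N D'$ with $D'$ in the same subgroup (and of the same degree, after taking homogeneous components). Bilinearity of the composition action, as in the commutative diagram above, then gives
\[
	\xi\cdot D=\xi\cdot(ND')=(N\xi)\cdot D'=0.
\]
This is exactly the tensor-product argument: the action factors through $\operatorname{Ext}^i(R[G/K],L^\bullet)\otimes_{\mathbb Z}(\text{divisible part})$, and the tensor product of a torsion group with a divisible group is zero.

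The only subtle point is what ``divisible part'' means: it should be a divisible subgroup, for instance the maximal divisible subgroup, or $Y$ from Theorem~\ref{Theorem4}, each $Y^n$ of which is divisible. With that reading, the factorization $D=ND'$ stays inside the subgroup and the argument goes through. Homogeneity is not an issue, since the degree components of a divisible graded subgroup are themselves divisible, so we may work one degree at a time.
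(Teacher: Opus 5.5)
Your proof is correct and follows the same route as the paper. You identify the Ext group with $\mathbb{H}^i(K,L^\bullet)$ via Shapiro's lemma, use the preceding proposition together with $E_\infty^{0,i}\subset h^i(L^\bullet)^K$ to see that this group is torsion, and then invoke the bilinearity of the composition action, since a torsion group tensored with a divisible one is zero. (Your concern about $D'$ staying inside the divisible subgroup is unnecessary: the identity $\xi\cdot D=(N\xi)\cdot D'=0$ only needs some $D'$ in the whole algebra with $ND'=D$.)
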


Finally,
we consider the action of a divisible element on an element which is neither torsion nor divisible.

\begin{lemma}
	Let $L^\bullet$ be a complex in $\operatorname{SM}_R(G)$.
	Then,
	for every divisible element
	\[
		d\in \mathscr{H}(G,K)_R
	\]
	and every
	\[
		f\in \operatorname{Ext}^*_{\operatorname{SM}_R(G)}(R[G/K],L^\bullet),
	\]
	the element $f\circ d$ is divisible.
\end{lemma}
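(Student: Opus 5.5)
The argument will use only the bilinearity of the action, i.e.\ that composition of morphisms of complexes is additive in the second variable. Recall that an element $d$ of an abelian group is divisible if for every integer $m\neq 0$ there is $d'$ with $md'=d$. So fix $f\in\operatorname{Ext}^*_{\operatorname{SM}_R(G)}(R[G/K],L^\bullet)$, a divisible $d\in\mathscr H(G,K)_R$, and a nonzero integer $m$. The goal is an element $e$ with $m e=f\circ d$.

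First, I would reduce to homogeneous components. Write $d=\sum_j d_j$ with $d_j\in\mathscr H^j(G,K)_R$. Since multiplication by $m$ preserves degrees and the sum is direct, $md'=d$ implies $m d'_j=d_j$ for each $j$. Hence each $d_j$ is divisible. Likewise, $f\circ d=\sum_{i,j} f_i\circ d_j$ by bilinearity, and a finite sum of divisible elements is divisible (choose preimages termwise). So it suffices to treat homogeneous $f$ and $d$.

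Next, choose $d'\in\mathscr H(G,K)_R$ with $md'=d$. Represent $d'$ by a morphism $g'\in\operatorname{Hom}^*_{\mathrm{SM}_R(G)}(I^\bullet,I^\bullet)$ and $f$ by $\tilde f\in\operatorname{Hom}^*(I^\bullet,I_L^\bullet)$. Then $mg'$ represents $d$, and $\tilde f\circ(mg')=m(\tilde f\circ g')$ because composition is $\mathbb Z$-linear in each variable. Passing to cohomology of the Hom complexes, this gives $f\circ d=m\,(f\circ d')$, so we may take $e=f\circ d'$. Equivalently, via the factorization through $\operatorname{Ext}\otimes_{\mathbb Z}\mathscr H$ displayed earlier, $f\otimes d=f\otimes md'=m(f\otimes d')$, and we apply the induced map.

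The only point needing care, which I regard as the main (mild) obstacle, is that the action is well defined on classes. We must know that the composite of cocycles represents a class independent of the chosen representatives, and that the class of $mg'$ is $m[g']$. Both follow from the standard fact that composition $\operatorname{Hom}^\bullet(I^\bullet,I_L^\bullet)\times\operatorname{Hom}^\bullet(I^\bullet,I^\bullet)\to\operatorname{Hom}^\bullet(I^\bullet,I_L^\bullet)$ is a bilinear chain map (Leibniz rule), so it descends to a bilinear map on cohomology. Since $m$ was an arbitrary nonzero integer, $f\circ d$ is divisible.
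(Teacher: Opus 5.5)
Your proposal is correct and is essentially the paper's argument: choose $d'$ with $md'=d$ and use bilinearity of the action to get $f\circ d=m\,(f\circ d')$. The reduction to homogeneous components and the check that the action is well defined on classes are harmless extra care; the paper applies bilinearity directly to $d$.
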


\begin{proof}
	Since $d$ is divisible,
	for every $n>0$ there exists an element
	\[
		d_n \in \mathscr{H}(G,K)_R
	\]
	such that 
	\[
		nd_n=d.
	\]
	Therefore,
	considering $f\circ d$,
	we have for every $n>0$
	\[
		f\circ d=f\circ (nd_n)=n(f\circ d_n).
	\]
	Thus $f\circ d$ is divisible.
\end{proof}

Therefore the action of the divisible part always has divisible image.
In general,
the divisible part may be very large.
Hence,
if some finiteness condition is imposed on
\[
	\operatorname{Ext}^*_{\operatorname{SM}_R(G)}(R[G/K],L^\bullet),
\]
the action of the divisible part may become trivial.

\section{Multiplication in the Derived Hecke Algebra with Coefficients in a Profinite Ring}
Using the preceding discussion,
we now study the derived Hecke algebra.

Let $G$ be a locally profinite group,
and let $K$ be an open compact subgroup of $G$.
Let $(R_i)$ be an inverse system of finite rings,
and let $R$ be a torsion-free commutative ring such that
\[
	R\cong\varprojlim_i R_i.
\]
Moreover,
for each $x\in K\backslash G/K$,
set
\[
	K_x=K\cap g_xKg_x^{-1}.
\]

\subsection{Inverse limits of derived Hecke algebras}
All inverse limits of graded modules and graded rings in this section are taken degreewise.
For each $i\in I$ and $x\in K\backslash G/K$,
let
\[
	\phi^x_i:H^*(K_x,R^d)\rightarrow H^*(K_x,R_i)
\]
denote the map on group cohomology induced by the coefficient map $R^d\rightarrow R_i$.
Since the family
\[
	(\phi^x_i)_{i\in I}
\]
is compatible with the transition maps
\[
	H^*(K_x,R_i)\rightarrow H^*(K_x,R_j)
\]
induced by the morphisms $R_i\rightarrow R_j$ in the inverse system,
the universal property of the inverse limit gives a natural map
\[
	H^*(K_x,R^d)\rightarrow \varprojlim_{i\in I}H^*(K_x,R_i).
\]
Therefore,
by taking the direct sum of these maps,
we obtain a map from the derived Hecke algebra
\[
	\mathscr{H}(G,K)_R\rightarrow \bigoplus_{x\in K\backslash G/K} \varprojlim_{i\in I}H^*(K_x,R_i).
\]
We set
\begin{align*}
	\overline{\mathscr{H}}^n(G,K)_R
		&:=\bigoplus_{x\in K\backslash G/K} \varprojlim_{i\in I}H^n(K_x,R_i),\\
	\overline{\mathscr{H}}(G,K)_R
		&:=\bigoplus_{n\geqq 0}\overline{\mathscr{H}}^n(G,K)_R.
\end{align*}
Moreover,
the maps
\[
	H^*(K_x,R_i)\rightarrow H^*(K_x,R_j)
\]
induced by the morphisms $R_i\rightarrow R_j$ in the inverse system give rise to maps between derived Hecke algebras
\[
	\mathscr{H}(G,K)_{R_i}\rightarrow \mathscr{H}(G,K)_{R_j}.
\]
We may therefore consider the inverse limit
\begin{align*}
	\widehat{\mathscr{H}}^n(G,K)_R
		&:=\varprojlim_{i\in I} \mathscr{H}^n(G,K)_{R_i}\cong \varprojlim_{i\in I}\bigoplus_{x\in K\backslash G/K}H^n(K_x,R_i),\\
	\widehat{\mathscr{H}}(G,K)_R
		&:=\bigoplus_{n\geqq 0}\widehat{\mathscr{H}}^n(G,K)_R.
\end{align*}
The products on $\mathscr{H}(G,K)_{R_i}$ induce a graded multiplication on $\widehat{\mathscr{H}}(G,K)_R$.

Every element
\[
	a\in \overline{\mathscr{H}}(G,K)_R
\]
can be expressed as a family
\[
	(a_x)_{x\in K\backslash G/K},
\]
where
\[
	a_x\in \varprojlim_{i\in I}H^*(K_x,R_i)
\]
and $a_x=0$ for all but finitely many $x\in K\backslash G/K$.
Moreover,
each $a_x$ can be expressed as a compatible family
\[
	a_x=(a_{x,i})_{i\in I},~a_{x,i}\in H^*(K_x,R_i),
\]
with respect to the transition maps of the inverse system.
Thus,
$a$ can be written as 
\[
	a=((a_{x,i})_{i\in I})_{x\in K\backslash G/K}.
\]
In general,
the assignment
\[
	a=((a_{x,i})_{i\in I})_{x\in K\backslash G/K}\mapsto ((a_{x,i})_{x\in K\backslash G/K})_{i\in I}
\]
defines a natural map
\[
	\overline{\mathscr{H}}(G,K)_R\rightarrow \widehat{\mathscr{H}}(G,K)_R,
\]
and this map is injective.

We now consider the image of $\mathscr{H}(G,K)_R$.
For
\[
	a=(a_x)_{x\in K\backslash G/K}\in \mathscr{H}(G,K)_R,
\]
its image in $\overline{\mathscr{H}}(G,K)_R$ is
\[
	((\phi^x_i(a_x))_{i\in I})_{x\in K\backslash G/K},
\]
whereas its image in $\widehat{\mathscr{H}}(G,K)_R$ is
\[
	((\phi^x_i(a_x))_{x\in K\backslash G/K})_{i\in I}.
\]

The image of $\overline{\mathscr{H}}(G,K)_R$ in $\widehat{\mathscr{H}}(G,K)_R$ can be described as the set of elements for which there exists a finite subset
\[
	X\subset K\backslash G/K
\]
such that
\[
	a_{y,i}=0
\]
for every $y\notin X$ and every $i\in I$.
Thus,
$\overline{\mathscr{H}}(G,K)_R$ may be regarded as a submodule of $\widehat{\mathscr{H}}(G,K)_R$.

Furthermore,
when the multiplication on $\widehat{\mathscr{H}}(G,K)_R$ is restricted to $\overline{\mathscr{H}}(G,K)_R$,
the product of two elements of finite support again has finite support,
and hence belongs to $\overline{\mathscr{H}}(G,K)_R$.
Therefore,
$\overline{\mathscr{H}}(G,K)_R$
may be regarded as a subring of
$\widehat{\mathscr{H}}(G,K)_R$.

The maps on group cohomology induced by the coefficient maps $R\rightarrow R_i$
give rise to maps between derived Hecke algebras
\[
	\mathscr{H}(G,K)_R \rightarrow \mathscr{H}(G,K)_{R_i}.
\]
These maps are compatible with the transition maps of the inverse system. Therefore,
the universal property of the inverse limit gives a map
\[
	\mathscr{H}(G,K)_R \rightarrow \widehat{\mathscr{H}}(G,K)_R.
\]

The maps obtained above form the following diagram:
\[
	\begin{tikzcd}
		\mathscr{H}(G,K)_R \ar[r] \ar[dr]
			& \overline{\mathscr{H}}(G,K)_R \ar[d]\\
		{}
			& \widehat{\mathscr{H}}(G,K)_R.
	\end{tikzcd}
\]
In fact,
this diagram is commutative,
and we obtain the following proposition.

\begin{proposition}\label{proposition7.1}
	The following diagram of graded modules is commutative:
	\[
	\begin{tikzcd}
		\mathscr{H}(G,K)_R \ar[r] \ar[dr]
			&	\overline{\mathscr{H}}(G,K)_R \ar[d]\\
		{}
			& \widehat{\mathscr{H}}(G,K)_R.
	\end{tikzcd}
	\]
	Here the map
	\[
		\mathscr{H}(G,K)_R \rightarrow \overline{\mathscr{H}}(G,K)_R
	\]
	is induced,
	for each $x\in K\backslash G/K$,
	by the compatible family
	\[
		(\phi_i^x)_{i\in I},
	\]
	the map
	\[
		\mathscr{H}(G,K)_R \rightarrow \widehat{\mathscr{H}}(G,K)_R
	\]
	is induced by the universal property of the inverse limit,
	and the map
	\[
		\overline{\mathscr{H}}(G,K)_R \rightarrow \widehat{\mathscr{H}}(G,K)_R
	\]
	is the natural embedding.
	
	Moreover,
	every divisible element of $\mathscr{H}(G,K)_R$ maps to zero in both $\overline{\mathscr{H}}(G,K)_R$ and	$\widehat{\mathscr{H}}(G,K)_R$.
\end{proposition}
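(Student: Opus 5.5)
Commutativity is checked componentwise, using the explicit formulas for the two images recorded just before the statement. Let $a=(a_x)_{x}\in\mathscr H(G,K)_R$, with $a_x\in H^*(K_x,R^d)$ and $a_x=0$ outside a finite set $S\subset K\backslash G/K$. Under the identification $\mathscr H(G,K)_{R_i}\cong\bigoplus_x H^*(K_x,R_i)$, the map $\mathscr H(G,K)_R\to\mathscr H(G,K)_{R_i}$ induced by $R\to R_i$ is $\bigoplus_x\phi^x_i$. This holds because the group-cohomological description of the derived Hecke algebra (via Shapiro's lemma and the Mackey decomposition of $R[G/K]$ restricted to $K$) is natural in the coefficient ring. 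I will state this naturality explicitly, since it is the only non-formal input.

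Granting naturality, the image of $a$ in $\widehat{\mathscr H}(G,K)_R$ is the compatible family $\bigl((\phi^x_i(a_x))_x\bigr)_i$. The composite through $\overline{\mathscr H}(G,K)_R$ first sends $a$ to $\bigl((\phi^x_i(a_x))_i\bigr)_x$. This element has support contained in $S$, so it lies in the direct sum. The natural embedding then reorders the indices and gives the same family. Both maps into the inverse limit therefore have the same projection to each $\mathscr H(G,K)_{R_i}$. By the uniqueness in the universal property of $\varprojlim_i$, the two maps agree, and the diagram commutes. All maps respect the grading, so this is commutativity of graded modules.

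For the second assertion, let $d\in\mathscr H(G,K)_R$ be divisible, and fix $i$. Since $R_i$ is finite, its additive exponent $N_i$ kills $R_i$, and hence kills $\mathscr H(G,K)_{R_i}$, which is an $R_i$-module (equivalently, a direct sum of cohomology groups with $R_i$-coefficients). Choose $d'$ with $N_id'=d$. Then the image of $d$ in $\mathscr H(G,K)_{R_i}$ equals $N_i$ times the image of $d'$, which is zero. So every projection of the image of $d$ in $\widehat{\mathscr H}(G,K)_R$ vanishes, and the image is zero. By the commutativity just proved, the image of $d$ in $\overline{\mathscr H}(G,K)_R$ maps to zero under the embedding $\overline{\mathscr H}(G,K)_R\hookrightarrow\widehat{\mathscr H}(G,K)_R$. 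That embedding is injective, so the image of $d$ in $\overline{\mathscr H}(G,K)_R$ is also zero.

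The same conclusion can be checked directly: each component $\phi^x_i(d_x)$ is killed by $N_i$ after dividing $d$. This requires applying the divisibility argument to each component $d_x$, which works because the decomposition is a direct sum, so the components of a divisor of $d$ are divisors of the components of $d$.

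The only step I expect to need care is the naturality in the coefficient ring of the identification of the Ext groups with $\bigoplus_x H^*(K_x,-)$. I will verify it by noting that the isomorphism is built from Shapiro's lemma and the decomposition of $R[KgK/K]$ as induced from $K_x$, both of which commute with base change along $R\to R_i$ at the level of the functorial cochain complexes.
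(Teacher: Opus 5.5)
Your proposal is correct and follows essentially the same argument as the paper. Commutativity comes from comparing the two images componentwise and using uniqueness in the universal property of $\varprojlim_i$, and divisible elements vanish because each $\mathscr H(G,K)_{R_i}$ is killed by the exponent of the finite ring $R_i$. The naturality you flag needs no separate check, since the paper defines $\mathscr H(G,K)_R\to\mathscr H(G,K)_{R_i}$ directly through the group-cohomological description, i.e.\ as $\bigoplus_x\phi^x_i$.
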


\begin{proof}
	By the preceding discussion,
	for every element of $\mathscr{H}(G,K)_R$,
	its image in $\widehat{\mathscr{H}}(G,K)_R$ coincides with the image obtained by passing through $\overline{\mathscr{H}}(G,K)_R$.
	Therefore,
	by the universal property of the inverse limit,
	the required diagram is commutative.
	
	We next consider divisible elements.
	For each $i\in I$,
	the derived Hecke algebra $\mathscr{H}(G,K)_{R_i}$ is an $R_i$-module.
	Since $R_i$ is a finite ring,
	no $R_i$-module has a nonzero divisible element in its underlying additive group.
	Hence	$\mathscr{H}(G,K)_{R_i}$ has no nonzero divisible element.
	
	Since homomorphisms of abelian groups preserve divisibility,
	every divisible element of $\mathscr{H}(G,K)_R$	maps to zero in $\mathscr{H}(G,K)_{R_i}$.
	This holds for every $i\in I$,
	and hence every divisible element of $\mathscr{H}(G,K)_R$ maps to zero in both $\overline{\mathscr{H}}(G,K)_R$ and $\widehat{\mathscr{H}}(G,K)_R$.
\end{proof}

Assume that $I=\mathbb{N}$.
Suppose that for every $x\in K\backslash G/K$ and every degree $n\geqq 0$,
the inverse system
\[
(H^n(K_x,R_i))_{i\in I}
\]
satisfies the Mittag--Leffler condition.
Then,
for group cohomology with coefficients in $R^p$,
where $R$ is equipped with its profinite topology,
we obtain an isomorphism
\[
	\varprojlim_{i\in I}H^*(K_x,R_i) \cong H^*(K_x,R^p).
\]
Consequently,
\[
	\overline{\mathscr{H}}(G,K)_R
		= \bigoplus_{x\in K\backslash G/K}\varprojlim_{i\in I}H^*(K_x,R_i)
		\cong \bigoplus_{x\in K\backslash G/K}H^*(K_x,R^p).
\]

\subsection{The Product on the Derived Hecke Algebra Induced by the Yoneda Product}

The product on the derived Hecke algebra is defined by the Yoneda product, whereas in \cite{koziol2024parahoricheckeextalgebrascharacteristic} it is defined by the graded opposite of the Yoneda product and is computed in terms of group cohomology.
Moreover,
Venkatesh's explicit formula for the product is given only when the coefficient ring is a finite ring satisfying certain conditions.
We therefore show that the graded opposite of the product structure on Venkatesh's function model agrees with the product in \cite{koziol2024parahoricheckeextalgebrascharacteristic},
and that the same product structure can be extended without change to the case of an arbitrary discrete commutative coefficient ring.
Here,
by the graded opposite product we mean the following:
if a product $\alpha*\beta$ is defined for an element $\alpha$ of degree $i$ and an element $\beta$ of degree $j$,
then
\[
	\alpha*_{\mathrm{gop}}\beta
	=
	(-1)^{ij}\beta*\alpha.
\]

For every $x\in G/K$,
let $g_x\in G$ be a representative of $x$,
and write
\[
	K_x
		=	K\cap g_xKg_x^{-1},~K_{x^{-1}}
		= K\cap g_x^{-1}Kg_x.
\]
Moreover,
for any $x,y,z\in G/K$,
put
\[
	G_{x,y}
		=g_xKg_x^{-1}\cap g_yKg_y^{-1},
\]
and
\[
	G_{x,y,z}
		=g_xKg_x^{-1}
		\cap g_yKg_y^{-1}
		\cap g_zKg_z^{-1}.
\]
Let
\[
	\alpha\in H^i(K_x,R),~\beta\in H^j(K_y,R)
\]
be nonzero elements.
Let $h_{\alpha,x}$ and $h_{\beta,y}$ denote the corresponding elements of the function model of the derived Hecke algebra.

In what follows,
via the Shapiro isomorphism
\[
	\operatorname{Sh}^K_{g_x}:
		H^*\left(K,\operatorname{c-Ind}_{K_x}^K(R)\right)
		\cong H^*(K_x,R),
\]
we identify $\alpha$ with the cohomology class appearing in \cite[Proposition 2.10]{koziol2024parahoricheckeextalgebrascharacteristic}.
We similarly identify $\beta\in H^j(K_y,R)$ with the corresponding cohomology class.

We first consider the condition on $z\in G/K$ under which
\[
	h_{\alpha,x}*h_{\beta,y}(z,K)
\]
is nonzero.

\begin{lemma}
	If
	\[
		h_{\alpha,x}*h_{\beta,y}(z,K)\neq 0,
	\]
	then
	\[
		g_zK\subset Kg_yKg_xK.
	\]
\end{lemma}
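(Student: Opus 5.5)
We will prove this by unwinding the function model of the derived Hecke algebra, viewed as $G$-equivariant cohomology-valued functions on $G/K\times G/K$. In this model, $h_{\alpha,x}$ is supported on the $G$-orbit of the pair $(g_xK,K)$, or equivalently, after translating by $G$, on pairs $(a,b)$ with $b^{-1}a\in Kg_xK$. Its value at $(g_xK,K)$ is $\alpha$, regarded as a class for the stabilizer $G_{x,1}=K_x$ of the pair. The same description holds for $h_{\beta,y}$ and $K_y$. The product is given by the convolution-type formula underlying \cite[Proposition~2.10]{koziol2024parahoricheckeextalgebrascharacteristic} and Venkatesh's function model:
\[
	(h_{\alpha,x}*h_{\beta,y})(z,K)
	=\sum_{w\in G_{z,1}\backslash G/K}
	\operatorname{cor}^{G_{z,1}}_{G_{z,w,1}}
	\Bigl(\operatorname{res}\, h_{\bullet}(z,w)\cup\operatorname{res}\, h_{\bullet}(w,K)\Bigr),
\]
with the factors ordered according to the Yoneda/graded-opposite convention fixed above. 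Here $\operatorname{res}$ denotes restriction to $G_{z,w,1}$. We will only use the \emph{shape} of this formula: $(z,K)$ is reached through an intermediate coset $w$, and each summand vanishes unless both factors are nonzero at the corresponding pairs.

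The key step is support bookkeeping. Suppose the sum is nonzero. Then some intermediate $wK=g_wK$ gives a nonzero summand, so both factors are nonzero at their respective pairs. For the factor indexed by $y$ (which one is written first depends on the opposite convention, so we will check it carefully), nonvanishing at $(z,w)$ forces $g_w^{-1}g_z\in Kg_yK$. For the factor indexed by $x$, nonvanishing at $(w,K)$ forces $g_w\in Kg_xK$. Combining the two gives
\[
	g_z\in g_wKg_yK\subset Kg_xK\,Kg_yK.
\]
The order of the double cosets must come out as in the statement, namely $g_zK\subset Kg_yKg_xK$. So we will match the roles of $x$ and $y$ to the Yoneda product rather than to its graded opposite used in \cite{koziol2024parahoricheckeextalgebrascharacteristic}. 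Under the graded opposite, $h_{\alpha,x}*h_{\beta,y}=\pm\, h_{\beta,y}*_{\mathrm{gop}}h_{\alpha,x}$, so the sign is irrelevant for vanishing and the roles of $x$ and $y$ are swapped. Once the roles are fixed, right multiplication by $K$ gives the inclusion $g_zK\subset Kg_yKg_xK$.

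We expect the main obstacle to be this identification of conventions: making sure that, after passing through the Shapiro isomorphism $\operatorname{Sh}^K_{g_x}$ and the graded opposite, the first-applied operator in the Yoneda composition corresponds to the factor evaluated at $(w,K)$. We would settle this first by the degree-zero case. There the product reduces to the classical Hecke convolution of characteristic functions, $\mathbf 1_{KgK}*\mathbf 1_{KhK}$ supported on $KgKhK$ with the composition order. This fixes the convention, and the higher-degree case then follows, since cup products, restrictions and corestrictions do not enlarge supports.
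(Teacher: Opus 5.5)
Your outline matches the paper's argument: a nonzero summand forces both factors to be nonzero, and then you do support bookkeeping. As written, however, the proposal does not prove the lemma. The one nontrivial input, which factor is evaluated at $(z,w)$ and which at $(w,K)$, is never determined.

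\begin{itemize}
\item Your bookkeeping paragraph puts the $y$-factor at $(z,w)$ and the $x$-factor at $(w,K)$. That computation is correct, but it gives $g_zK\subset Kg_xKg_yK$, the \emph{reverse} of the claim.
\item You then swap the roles because ``the order must come out as in the statement.'' This assumes what is to be proved.
\item The degree-zero check does not settle the orientation as formulated. That $\mathbf 1_{KgK}*\mathbf 1_{KhK}$ is supported on $KgKhK$ says nothing about whether the composite of the operators attached to $x$ and $y$ corresponds to $\mathbf 1_{Kg_xK}*\mathbf 1_{Kg_yK}$ or to $\mathbf 1_{Kg_yK}*\mathbf 1_{Kg_xK}$. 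Which one you get depends on how operators are matched with functions, and that is exactly the convention in question.
\item The remark that cup products, restrictions and corestrictions ``do not enlarge supports'' would only carry the orientation to higher degrees if you already knew the slot assignment in the formula is independent of degree.
\end{itemize}

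What closes the gap is the explicit formula that the paper takes as the definition of the product on the function model. It is the sum over $g\in K_z\backslash G/K$ of
\[
\operatorname{Cor}\bigl(\operatorname{Res}\,h_{\alpha,x}(z,gK)\cup\operatorname{Res}\,h_{\beta,y}(gK,K)\bigr).
\]
This orientation matches composition. If $h$ records the coefficient of $a$ in $T(b)$, then $(T_1\circ T_2)(K)=\sum_{w}h_2(w,K)\,T_1(w)$. So the coefficient of $z$ is $\sum_w h_1(z,w)h_2(w,K)$, and the first-applied operator sits in the slot $(w,K)$. The paper separately confirms, by comparison with \cite[Proposition~2.10]{koziol2024parahoricheckeextalgebrascharacteristic}, that this product is the Yoneda product.

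With this formula in hand, a nonzero summand gives two conditions:
\begin{itemize}
\item $gK\subset Kg_yK$, from the factor $h_{\beta,y}(gK,K)$;
\item $g_zK\subset gKg_xK$, from the factor $h_{\alpha,x}(z,gK)$.
\end{itemize}
Together these give $g_zK\subset Kg_yKg_xK$, which is exactly the paper's proof.
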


\begin{proof}
	By definition,
	\[
		h_{\alpha,x}*h_{\beta,y}(z,K)=\sum_{g\in K_z\backslash G/K} \operatorname{Cor}^{G_{z,e}}_{G_{z,g,e}}
			\left(
				\operatorname{Res}^{G_{z,g}}_{G_{z,g,e}}
				\bigl(	h_{\alpha,x}(z,gK)\bigr)
				\cup
				\operatorname{Res}^{G_{g,e}}_{G_{z,g,e}}
				\bigl(	h_{\beta,y}(gK,K)\bigr)
		\right).
	\]
	Here the sum is taken over representatives $g\in G$ chosen from the elements of $K_z\backslash G/K$,
	and we put
	\[
		G_{z,e}=K_z,
	\]
	\[
		G_{z,g}=g_zKg_z^{-1}\cap gKg^{-1},~
		G_{g,e}=gKg^{-1}\cap K,
	\]
	and
	\[
		G_{z,g,e}=K_z\cap gKg^{-1}.
	\]
	
	If
	\[
		h_{\alpha,x}*h_{\beta,y}(z,K)\neq 0,
	\]
		then for some representative $g\in G$ of a double coset,
	\[
		h_{\alpha,x}(z,gK)\neq 0,~h_{\beta,y}(gK,K)\neq 0.
	\]
	Here,
	the condition
	\[
		h_{\beta,y}(gK,K)\neq 0
	\]
	is equivalent to
	\[
		gK\subset Kg_yK.
	\]
	Moreover,
	if
	\[
		h_{\alpha,x}(z,gK)\neq 0,
	\]
	then there exists $g'\in G$ such that
	\[
		g_zK=g'g_xK,~gK=g'K.
	\]
	Therefore,
	if
	\[
		h_{\alpha,x}*h_{\beta,y}(z,K)\neq 0,
	\]
	then
	\[
	g_zK=g'g_xK\subset g'Kg_xK=gKg_xK\subset Kg_yKg_xK.
	\]
\end{proof}

Next,
fix $z\in G/K$ and consider those $gK\in G/K$ satisfying
\[
	h_{\alpha,x}(z,gK)\neq 0,~h_{\beta,y}(gK,K)\neq 0.
\]
Let $\mathcal{S}_z$ denote the support of $h_{\alpha,x}$ and $h_{\beta,y}$ relative to $z$;
that is,
\[
	\mathcal{S}_z=
	\left\{
		gK\in G/K
		\mathrel{}\middle|\mathrel{}
		h_{\alpha,x}(z,gK)\neq 0,\ 
		h_{\beta,y}(gK,K)\neq 0
	\right\}.
\]

We have already seen that
\[
	h_{\beta,y}(gK,K)\neq 0
\]
is equivalent to
\[
	gK\subset Kg_yK.
\]
Moreover,
the condition
\[
	h_{\alpha,x}(z,gK)\neq 0
\]
is equivalent to the existence of some $g'\in G$ satisfying
\[
	g_zK=g'g_xK,~gK=g'K,
\]
and this is in turn equivalent to
\[
	g_zK\subset gKg_xK.
\]
Thus,
$\mathcal{S}_z$ can be written as
\[
	\mathcal{S}_z=
	\left\{
		gK\in G/K
		\mathrel{}\middle|\mathrel{}
		gK\subset Kg_yK,\ 
		g_zK\subset gKg_xK
	\right\}.
\]

\begin{lemma}
	The group $K_z$ acts on $\mathcal{S}_z$ from the left.
\end{lemma}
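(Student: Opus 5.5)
The plan is to let $k\in K_z$ act on $G/K$ by left multiplication, $gK\mapsto kgK$, and to check that this preserves the two conditions that cut out $\mathcal S_z$. This is well defined on cosets, since left and right multiplication commute. It is a group action on $G/K$, so it suffices to show that $kgK\in\mathcal S_z$ whenever $gK\in\mathcal S_z$ and $k\in K_z$.

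The first condition is $gK\subset Kg_yK$. Since $K_z\subset K$, we have $kgK\subset kKg_yK=Kg_yK$, so the first condition is preserved.

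The second condition is $g_zK\subset gKg_xK$. Here we use that $k\in K_z=K\cap g_zKg_z^{-1}$, so $k^{-1}$ also lies in $g_zKg_z^{-1}$. Then $k^{-1}g_z\in g_zK$, and hence $kg_zK=g_zK$. Multiplying the inclusion on the left by $k$ gives
\[
	g_zK=kg_zK\subset kgKg_xK,
\]
which is exactly the second condition for $kgK$. This step is the only place where membership of $k$ in $g_zKg_z^{-1}$, rather than just in $K$, is needed. It is also why the action is by $K_z$ and not by all of $K$.

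One could equivalently phrase the second condition through the existence of $g'$ with $g_zK=g'g_xK$ and $gK=g'K$. In that form, one replaces $g'$ by $kg'$ and uses $kg_zK=g_zK$ in the same way. The main point to verify carefully is that the reformulation of $\mathcal S_z$ stated just before the lemma is used consistently. Beyond that there is no real obstacle.
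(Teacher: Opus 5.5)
Your proposal is correct and follows essentially the same argument as the paper. Both show that left multiplication by $k\in K_z$ preserves the condition $gK\subset Kg_yK$ because $K_z\subset K$, and preserves the condition $g_zK\subset gKg_xK$ because $kg_zK=g_zK$; the second step is where $k\in g_zKg_z^{-1}$ is used.
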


\begin{proof}
	Take
	\[
	k=g_zk'g_z^{-1}\in K_z
	\]
	and
	\[
	gK\in\mathcal{S}_z.
	\]
	Consider $kgK$.
	Since
	\[
	gK\subset Kg_yK
	\]
	and
	\[
	k\in K_z\subset K,
	\]
	we have
	\[
	kgK\subset Kg_yK.
	\]
	Moreover,
	\[
	kg_zK\subset kgKg_xK.
	\]
	On the other hand,
	\[
	kg_zK
	=
	g_zk'g_z^{-1}g_zK
	=
	g_zk'K
	=
	g_zK.
	\]
	Hence
	\[
	g_zK\subset kgKg_xK.
	\]
	Therefore,
	\[
	kgK\in\mathcal{S}_z.
	\]
\end{proof}

Consequently,
the product on the derived Hecke algebra can be written as
\[
h_{\alpha,x}*h_{\beta,y}(z,K)
=
\sum_{g\in K_z\backslash\mathcal{S}_z}
\operatorname{Cor}^{G_{z,e}}_{G_{z,g,e}}
\left(
\operatorname{Res}^{G_{z,g}}_{G_{z,g,e}}
\bigl(h_{\alpha,x}(z,gK)\bigr)
\cup
\operatorname{Res}^{G_{g,e}}_{G_{z,g,e}}
\bigl(h_{\beta,y}(gK,K)\bigr)
\right).
\]

We next consider the set
\[
K_{y^{-1}}\backslash
\left(g_y^{-1}Kg_z\cap Kg_xK\right)
/K_{z^{-1}}.
\]

\begin{lemma}
	The group $K_{y^{-1}}$ acts from the left,
	and $K_{z^{-1}}$ acts from the right,
	on
	\[
	g_y^{-1}Kg_z\cap Kg_xK.
	\]
\end{lemma}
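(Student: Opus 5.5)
The plan is to check the two actions separately, directly from the definitions of the groups involved, in the same way as the preceding lemma on the $K_z$-action on $\mathcal{S}_z$. Recall that
\[
	K_{y^{-1}}=K\cap g_y^{-1}Kg_y,\qquad K_{z^{-1}}=K\cap g_z^{-1}Kg_z .
\]
The set in question is the intersection of two subsets of $G$, namely $g_y^{-1}Kg_z$ and $Kg_xK$. It therefore suffices to show that each of these two subsets is stable under left multiplication by $K_{y^{-1}}$ and under right multiplication by $K_{z^{-1}}$. Once this is done, the intersection is stable under both operations. Because these operations are restrictions of left and right multiplication in $G$, they commute with each other. They are also genuine group actions, since $K_{y^{-1}}$ and $K_{z^{-1}}$ are subgroups of $G$.

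For the double coset $Kg_xK$ the check is immediate. Both $K_{y^{-1}}$ and $K_{z^{-1}}$ are contained in $K$, so $k\,Kg_xK\,k'=Kg_xK$ for every $k\in K_{y^{-1}}$ and $k'\in K_{z^{-1}}$.

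For $g_y^{-1}Kg_z$, take $k\in K_{y^{-1}}$ and write $k=g_y^{-1}k_1g_y$ with $k_1\in K$. Then
\[
	k\,g_y^{-1}Kg_z=g_y^{-1}k_1g_yg_y^{-1}Kg_z=g_y^{-1}k_1Kg_z=g_y^{-1}Kg_z .
\]
Similarly, take $k'\in K_{z^{-1}}$ and write $k'=g_z^{-1}k_2g_z$ with $k_2\in K$. Then
\[
	g_y^{-1}Kg_z\,k'=g_y^{-1}Kk_2g_z=g_y^{-1}Kg_z .
\]
Combining these computations with the previous paragraph shows that $K_{y^{-1}}\times K_{z^{-1}}$ acts on $g_y^{-1}Kg_z\cap Kg_xK$ by $(k,k')\cdot g=kgk'^{-1}$, which is what the lemma asserts.

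There is essentially no obstacle here. The only point needing care is the bookkeeping of which conjugate defines each group. For the left action we need the membership $K_{y^{-1}}\subset g_y^{-1}Kg_y$, and for the right action we need $K_{z^{-1}}\subset g_z^{-1}Kg_z$. The conventions $K_{x^{-1}}=K\cap g_x^{-1}Kg_x$ fixed above provide exactly these inclusions.
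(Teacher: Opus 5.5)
Your proof is correct and is the paper's argument. The paper also writes $f=g_y^{-1}k_1g_y$, $h=g_z^{-1}k_2g_z$ and $g=g_y^{-1}k_3g_z$, uses $f,h\in K$ for stability of $Kg_xK$, and computes $fg=g_y^{-1}k_1k_3g_z$ and $gh=g_y^{-1}k_3k_2g_z$ for stability of $g_y^{-1}Kg_z$; checking each set separately and noting that the two multiplications commute only repackages this computation.
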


\begin{proof}
	Take arbitrary elements
	\[
	f\in K_{y^{-1}},~
	h\in K_{z^{-1}},~
	g\in g_y^{-1}Kg_z\cap Kg_xK,
	\]
	and write
	\[
	f=g_y^{-1}k_1g_y,~
	h=g_z^{-1}k_2g_z,~
	g=g_y^{-1}k_3g_z.
	\]
	Since $f,h\in K$,
	we have
	\[
	fg,\ gh,\ fgh\in Kg_xK.
	\]
	Moreover,
	\begin{align*}
		fg
		&=
		g_y^{-1}k_1g_yg_y^{-1}k_3g_z
		=
		g_y^{-1}k_1k_3g_z
		\in
		g_y^{-1}Kg_z,
		\\
		gh
		&=
		g_y^{-1}k_3g_zg_z^{-1}k_2g_z
		=
		g_y^{-1}k_3k_2g_z
		\in
		g_y^{-1}Kg_z.
	\end{align*}
	Thus,
	\[
	fg,\ gh
	\in
	g_y^{-1}Kg_z\cap Kg_xK.
	\]
\end{proof}

By the preceding two lemmas,
we may consider the sets
\[
K_z\backslash\mathcal{S}_z
\]
and
\[
K_{y^{-1}}\backslash
\left(g_y^{-1}Kg_z\cap Kg_xK\right)
/K_{z^{-1}}.
\]
In fact,
these two sets are in bijection as follows.

\begin{lemma}
	The map
	\begin{align*}
		\phi:
		K_{y^{-1}}\backslash
		\left(g_y^{-1}Kg_z\cap Kg_xK\right)
		/K_{z^{-1}}
		&\longrightarrow
		K_z\backslash\mathcal{S}_z,
		\\
		K_{y^{-1}}gK_{z^{-1}}
		&\longmapsto
		K_zg_zg^{-1}K
	\end{align*}
	is a bijection.
\end{lemma}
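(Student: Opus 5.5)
The plan is to verify directly that $\phi$ is well defined with values in $K_z\backslash\mathcal S_z$, to build an explicit inverse $\psi$, and then to check that the two composites are the identity. Throughout I will use the description
\[
\mathcal S_z=\{gK\mid gK\subset Kg_yK,\ g_zK\subset gKg_xK\}
\]
obtained above, together with the identities $K_{y^{-1}}=K\cap g_y^{-1}Kg_y$, $K_{z^{-1}}=K\cap g_z^{-1}Kg_z$ and $K_z=K\cap g_zKg_z^{-1}$.

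\emph{Step 1: the image lies in $\mathcal S_z$.} Let $g\in g_y^{-1}Kg_z\cap Kg_xK$ and write $g=g_y^{-1}k_3g_z$. Then $g_zg^{-1}=k_3^{-1}g_y$, so
\[
g_zg^{-1}K\subset Kg_yK .
\]
Moreover $g_z=(g_zg^{-1})\,g$ with $g\in Kg_xK$, hence
\[
g_zK\subset (g_zg^{-1})Kg_xK .
\]
Thus $g_zg^{-1}K\in\mathcal S_z$.

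\emph{Step 2: $\phi$ is well defined.} Replace $g$ by $fgh$ with $f\in K_{y^{-1}}$ and $h\in K_{z^{-1}}$. Then
\[
g_z(fgh)^{-1}K=g_zh^{-1}g_z^{-1}\cdot g_zg^{-1}\cdot f^{-1}K .
\]
Since $f\in K$, the factor $f^{-1}$ is absorbed into $K$. The element $g_zh^{-1}g_z^{-1}$ lies in $K\cap g_zKg_z^{-1}=K_z$, because $h\in K\cap g_z^{-1}Kg_z$. Hence the class in $K_z\backslash\mathcal S_z$ is unchanged.

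\emph{Step 3: the inverse map.} Given $gK\in\mathcal S_z$, I have $g\in Kg_yK$. So I may choose a representative $g'\in gK\cap Kg_y$, and I set $\psi(K_zgK):=K_{y^{-1}}\,g'^{-1}g_z\,K_{z^{-1}}$. First, $g'^{-1}g_z$ lies in the required set: $g'^{-1}\in g_y^{-1}K$ gives $g'^{-1}g_z\in g_y^{-1}Kg_z$, and $g_zK\subset g'Kg_xK$ gives $g'^{-1}g_z\in Kg_xK$. There are two independence checks.
\begin{itemize}
\item Another representative has the form $g'u$ with $u\in K$ and $g'u\in Kg_y$. This forces $u\in K\cap g_y^{-1}Kg_y=K_{y^{-1}}$, which only changes $g'^{-1}g_z$ on the left by $u^{-1}\in K_{y^{-1}}$.
\item Replacing $gK$ by $kgK$ with $k\in K_z$ replaces $g'$ by $kg'$, which again lies in $Kg_y$. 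Then
\[
(kg')^{-1}g_z=g'^{-1}g_z\,(g_z^{-1}k^{-1}g_z),
\]
and $g_z^{-1}k^{-1}g_z\in K_{z^{-1}}$.
\end{itemize}

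\emph{Step 4: the composites are the identity.} Take $g=g_y^{-1}k_3g_z$. Then $g_zg^{-1}=k_3^{-1}g_y$ already lies in $Kg_y$, so it can serve as $g'$, and $\psi\phi$ returns $g'^{-1}g_z=g$. Conversely, $\phi\psi(K_zgK)=K_zg'K=K_zgK$, since $g'\in gK$.

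The only delicate point is Step 3. There one must notice that the representative of $gK$ has to be normalized into $Kg_y$ rather than merely $Kg_yK$ in order to land in $g_y^{-1}Kg_z$. After that, each remaining verification reduces to the three intersection identities for $K_{y^{-1}}$, $K_{z^{-1}}$ and $K_z$ recalled at the start.
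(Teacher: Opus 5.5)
Your proposal is correct and follows the paper's proof essentially step for step. It uses the same membership check for $g_zg^{-1}K\in\mathcal S_z$, the same well-definedness computation via $g_zh^{-1}g_z^{-1}\in K_z$, the same inverse $\psi$ (your representative $g'\in gK\cap Kg_y$ is exactly the paper's $k_1g_y$), and the same two independence checks and verifications that $\phi\psi$ and $\psi\phi$ are the identity.
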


\begin{proof}
	Take
	\[
	g\in g_y^{-1}Kg_z\cap Kg_xK
	\]
	and write
	\[
	g=g_y^{-1}kg_z.
	\]
	Then
	\[
	g_zg^{-1}K
	=
	g_z\left(g_z^{-1}k^{-1}g_y\right)K
	=
	k^{-1}g_yK
	\subset Kg_yK.
	\]
	Moreover,
	since $g\in Kg_xK$,
	\[
	g_zK
	=
	(g_zg^{-1})gK
	\subset
	(g_zg^{-1})Kg_xK.
	\]
	Thus,
	\[
	g_zK\subset(g_zg^{-1})Kg_xK.
	\]
	It follows that
	\[
	g_zg^{-1}K\in\mathcal{S}_z,
	\]
	and hence the displayed rule takes values in
	$K_z\backslash\mathcal{S}_z$.
	
	We next show that $\phi$ is well-defined,
	that is,
	that it is independent of the choice of representative.
	Take
	\[
	f=g_y^{-1}k_1g_y\in K_{y^{-1}}.
	\]
	Considering the image of $fg$ and using
	$f\in K_{y^{-1}}\subset K$,
	we obtain
	\begin{align*}
		K_zg_z(fg)^{-1}K
		&=
		K_zg_zg^{-1}f^{-1}K
		\\
		&=
		K_zg_zg^{-1}K.
	\end{align*}
	Similarly,
	take
	\[
	h=g_z^{-1}k_2g_z\in K_{z^{-1}}.
	\]
	The image of $gh$ is
	\[
	K_zg_z(gh)^{-1}K
	=
	K_zg_zh^{-1}g^{-1}K.
	\]
	Since $h\in K_{z^{-1}}$,
	we have
	\[
	g_zh^{-1}g_z^{-1}\in K_z.
	\]
	Therefore,
	\begin{align*}
		K_zg_zh^{-1}(g_z^{-1}g_z)g^{-1}K
		&=
		K_z(g_zh^{-1}g_z^{-1})g_zg^{-1}K
		\\
		&=
		K_zg_zg^{-1}K.
	\end{align*}
	Hence $\phi$ is well-defined.
	
	We define a map in the opposite direction
	\[
	\psi:
	K_z\backslash\mathcal{S}_z
	\longrightarrow
	K_{y^{-1}}\backslash
	\left(g_y^{-1}Kg_z\cap Kg_xK\right)
	/K_{z^{-1}}
	\]
	as follows.
	Take any
	\[
	gK\in\mathcal{S}_z.
	\]
	Since
	\[
	gK\subset Kg_yK,
	\]
	there exists $k_1\in K$ such that
	\[
	gK=k_1g_yK.
	\]
	We define
	\[
	\psi(K_zgK)
	=
	K_{y^{-1}}(k_1g_y)^{-1}g_zK_{z^{-1}}.
	\]
	
	We have
	\[
	(k_1g_y)^{-1}g_z
	=
	g_y^{-1}k_1^{-1}g_z
	\in
	g_y^{-1}Kg_z.
	\]
	Moreover,
	since
	\[
	g_zK\subset gKg_xK,
	\]
	we have
	\[
	g_zK\subset k_1g_yKg_xK.
	\]
	Thus,
	there exist $k_2,k_3\in K$ such that
	\[
	g_z=k_1g_yk_2g_xk_3.
	\]
	Consequently,
	\begin{align*}
		\psi(K_zgK)
		&=
		K_{y^{-1}}(k_1g_y)^{-1}g_zK_{z^{-1}}
		\\
		&=
		K_{y^{-1}}
		(k_1g_y)^{-1}
		k_1g_yk_2g_xk_3
		K_{z^{-1}}
		\\
		&=
		K_{y^{-1}}k_2g_xk_3K_{z^{-1}}.
	\end{align*}
	Hence the image of $\psi$ belongs to
	\[
	K_{y^{-1}}\backslash
	\left(g_y^{-1}Kg_z\cap Kg_xK\right)
	/K_{z^{-1}}.
	\]
	
	We next show that this map is well-defined,
	that is,
	it is independent of the choice of $k_1$
	and of the representative of the $K_z$-orbit.
	
	We first show that it is independent of the choice of $k_1$.
	Suppose that
	\[
	gK=k_1g_yK=k'g_yK
	\]
	for some $k'\in K$.
	From
	\[
	k_1g_yK=k'g_yK
	\]
	we obtain
	\[
	g_y^{-1}k_1^{-1}k'g_y\in K.
	\]
	Moreover,
	since $k_1^{-1}k'\in K$,
	\[
	g_y^{-1}k_1^{-1}k'g_y
	\in
	g_y^{-1}Kg_y.
	\]
	Thus,
	\[
	g_y^{-1}k_1^{-1}k'g_y
	\in
	K\cap g_y^{-1}Kg_y
	=
	K_{y^{-1}}.
	\]
	Therefore,
	\begin{align*}
		K_{y^{-1}}(k'g_y)^{-1}g_zK_{z^{-1}}
		&=
		K_{y^{-1}}
		(g_y^{-1}k_1^{-1}k'g_y)
		(k'g_y)^{-1}
		g_zK_{z^{-1}}
		\\
		&=
		K_{y^{-1}}
		(g_y^{-1}k_1^{-1})
		g_zK_{z^{-1}}
		\\
		&=
		K_{y^{-1}}
		(k_1g_y)^{-1}
		g_zK_{z^{-1}}.
	\end{align*}
	Hence the definition is independent of the choice of $k_1$.
	
	We next show that it is independent of the representative of the $K_z$-orbit.
	Take any $k\in K_z$.
	If
	\[
	gK=k_1g_yK,
	\]
	then
	\[
	kgK=kk_1g_yK.
	\]
	Hence
	\[
	K_{y^{-1}}
	(kk_1g_y)^{-1}g_z
	K_{z^{-1}}
	=
	K_{y^{-1}}
	(k_1g_y)^{-1}k^{-1}g_z
	K_{z^{-1}}.
	\]
	Since $k\in K_z$,
	\[
	g_z^{-1}k^{-1}g_z\in K_{z^{-1}}.
	\]
	Therefore,
	\begin{align*}
		&
		K_{y^{-1}}
		(k_1g_y)^{-1}k^{-1}g_z
		K_{z^{-1}}
		\\
		&=
		K_{y^{-1}}
		(k_1g_y)^{-1}
		(g_zg_z^{-1})k^{-1}g_z
		K_{z^{-1}}
		\\
		&=
		K_{y^{-1}}
		(k_1g_y)^{-1}
		g_z(g_z^{-1}k^{-1}g_z)
		K_{z^{-1}}
		\\
		&=
		K_{y^{-1}}
		(k_1g_y)^{-1}g_z
		K_{z^{-1}}.
	\end{align*}
	Thus,
	\[
	K_{y^{-1}}
	(kk_1g_y)^{-1}g_z
	K_{z^{-1}}
	=
	K_{y^{-1}}
	(k_1g_y)^{-1}g_z
	K_{z^{-1}},
	\]
	and hence the definition is independent of the representative of the $K_z$-orbit.
	
	Finally,
	we show that $\phi$ and $\psi$ are inverse to each other.
	Take any
	\[
	K_zgK\in K_z\backslash\mathcal{S}_z
	\]
	and choose $k_1\in K$ such that
	\[
	gK=k_1g_yK.
	\]
	Then
	\begin{align*}
		\phi\psi(K_zgK)
		&=
		\phi
		\left(
		K_{y^{-1}}(k_1g_y)^{-1}g_zK_{z^{-1}}
		\right)
		\\
		&=
		K_zg_z
		(g_y^{-1}k_1^{-1}g_z)^{-1}
		K
		\\
		&=
		K_zk_1g_yK
		\\
		&=
		K_zgK.
	\end{align*}
	Thus,
	\[
	\phi\psi=\operatorname{id}.
	\]
	
	Conversely,
	take any
	\[
	K_{y^{-1}}gK_{z^{-1}}
	\in
	K_{y^{-1}}\backslash
	\left(g_y^{-1}Kg_z\cap Kg_xK\right)
	/K_{z^{-1}}.
	\]
	Since
	\[
	g\in g_y^{-1}Kg_z,
	\]
	there exists $k\in K$ such that
	\[
	g=g_y^{-1}kg_z.
	\]
	We then have
	\[
	g_zg^{-1}K=k^{-1}g_yK.
	\]
	Consequently,
	\begin{align*}
		\psi\phi
		(K_{y^{-1}}gK_{z^{-1}})
		&=
		\psi(K_zg_zg^{-1}K)
		\\
		&=
		\psi(K_zk^{-1}g_yK)
		\\
		&=
		K_{y^{-1}}
		(k^{-1}g_y)^{-1}g_z
		K_{z^{-1}}
		\\
		&=
		K_{y^{-1}}
		g_y^{-1}kg_z
		K_{z^{-1}}
		\\
		&=
		K_{y^{-1}}gK_{z^{-1}}.
	\end{align*}
	Thus,
	\[
	\psi\phi=\operatorname{id}.
	\]
	It follows that $\phi$ and $\psi$ are inverse to each other,
	and therefore $\phi$ is a bijection.
\end{proof}

Let $A=K_{y^{-1}}\backslash({g_y}^{-1}Kg_z\cap Kg_xK)/K_{z^{-1}}$.
Using this bijection,
the product on the derived Hecke algebra can again be written as
\begin{align*}
	h_{\alpha,x}*h_{\beta,y}(z,K)
	&=
	\sum_{g\in
		K_{y^{-1}}\backslash
		(g_y^{-1}Kg_z\cap Kg_xK)
		/K_{z^{-1}}}
	\operatorname{Cor}^{G_{z,e}}_{G_{z,g_zg^{-1},e}}
	\Bigl(
	\operatorname{Res}^{G_{z,g_zg^{-1}}}_{G_{z,g_zg^{-1},e}}
	\bigl(
	h_{\alpha,x}(z,g_zg^{-1}K)
	\bigr)
	\\
	&\hspace{40mm}
	\cup
	\operatorname{Res}^{G_{g_zg^{-1},e}}_{G_{z,g_zg^{-1},e}}
	\bigl(
	h_{\beta,y}(g_zg^{-1}K,K)
	\bigr)
	\Bigr).
\end{align*}
Here the sum is taken over representatives
\[
g\in g_y^{-1}Kg_z\cap Kg_xK
\]
of the corresponding double cosets.

For each such $g$,
there exist $k_1,k_2,k_3\in K$ satisfying
\[
g
=
g_y^{-1}k_1^{-1}g_z
=
k_2g_xk_3.
\]
Noting that
\[
g_z=k_1g_yk_2g_xk_3,
\]
we obtain
\begin{align*}
	h_{\beta,y}(g_zg^{-1}K,K)
	&=
	h_{\beta,y}(k_1g_yK,K)
	\\
	&=
	[k_1^{-1}]^*
	h_{\beta,y}(g_yK,K)
	\\
	&=
	[k_1^{-1}]^*\beta,
	\\
	h_{\alpha,x}(z,g_zg^{-1}K)
	&=
	h_{\alpha,x}
	(k_1g_yk_2g_xK,k_1g_yK)
	\\
	&=
	[(k_1g_yk_2)^{-1}]^*
	h_{\alpha,x}(g_xK,K)
	\\
	&=
	[(k_1g_yk_2)^{-1}]^*\alpha.
\end{align*}
Thus,
the product on the derived Hecke algebra can further be written as
\begin{align*}
	h_{\alpha,x}*h_{\beta,y}(z,K)
	=
	\sum_{g\in
		A}
	\operatorname{Cor}^{G_{z,e}}_{G_{z,g_zg^{-1},e}}
	\Bigl(
	\operatorname{Res}^{G_{z,g_zg^{-1}}}_{G_{z,g_zg^{-1},e}}
	\left(
	[(k_1g_yk_2)^{-1}]^*\alpha
	\right)
	\cup
	\operatorname{Res}^{G_{g_zg^{-1},e}}_{G_{z,g_zg^{-1},e}}
	\left(
	[k_1^{-1}]^*\beta
	\right)
	\Bigr),
\end{align*}
here, $A=K_{y^{-1}}\backslash (g_y^{-1}Kg_z\cap Kg_xK)/K_{z^{-1}}$.
In particular,
the cup-product factor occurring in each summand is of the form
\[
[(k_1g_yk_2)^{-1}]^*\alpha
\cup
[k_1^{-1}]^*\beta.
\]
On the other hand,
in
\cite[Proposition 2.10]
{koziol2024parahoricheckeextalgebrascharacteristic},
the cup-product factor appearing in the product is of the form
\[
[k_1^{-1}]^*\beta
\cup
[(k_1g_yk_2)^{-1}]^*\alpha.
\]
Since the cup product is graded-commutative,
\[
[(k_1g_yk_2)^{-1}]^*\alpha
\cup
[k_1^{-1}]^*\beta
=
(-1)^{ij}
[k_1^{-1}]^*\beta
\cup
[(k_1g_yk_2)^{-1}]^*\alpha.
\]
Thus,
the product obtained above is precisely the graded opposite of the product in
\cite[Proposition 2.10]
{koziol2024parahoricheckeextalgebrascharacteristic}.

In
\cite{koziol2024parahoricheckeextalgebrascharacteristic},
the product on the derived Hecke algebra is defined as the graded opposite of the Yoneda product.
It follows that,
for an arbitrary coefficient ring,
the product on the derived Hecke algebra determined by the Yoneda product agrees with Venkatesh's product.
We therefore obtain the following.

\begin{proposition}
	Let $(R_i)_{i\in I}$ be an inverse system of finite rings,
	and let
	\[
	R=\varprojlim_{i\in I}R_i
	\]
	be a profinite ring.
	Then,
	for every $i\in I$,
	the natural map
	\[
	\mathscr{H}(G,K)_R
	\longrightarrow
	\mathscr{H}(G,K)_{R_i}
	\]
	is a homomorphism of graded rings.
	Consequently,
	the induced map
	\[
	\mathscr{H}(G,K)_R
	\longrightarrow
	\widehat{\mathscr{H}}(G,K)_R
	\]
	is also a homomorphism of graded rings.
	In particular,
	the diagram in Proposition~\ref{proposition7.1}
	is a commutative diagram of homomorphisms of graded rings.
\end{proposition}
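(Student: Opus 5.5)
The plan is to show that the coefficient map $\mathscr{H}(G,K)_R\to\mathscr{H}(G,K)_{R_i}$ preserves the explicit product formula term by term. The preceding discussion established that, for any discrete commutative coefficient ring $S$, the Yoneda product on $\mathscr{H}(G,K)_S\cong\bigoplus_{x}H^*(K_x,S)$ is given by Venkatesh's function-model formula. That formula, for $h_{\alpha,x}*h_{\beta,y}(z,K)$, is a finite sum over $A=K_{y^{-1}}\backslash(g_y^{-1}Kg_z\cap Kg_xK)/K_{z^{-1}}$. Each summand has the shape
\[
\operatorname{Cor}\bigl(\operatorname{Res}([(k_1g_yk_2)^{-1}]^*\alpha)\cup\operatorname{Res}([k_1^{-1}]^*\beta)\bigr).
\]
Both $R$ (viewed as discrete) and $R_i$ are discrete commutative rings, so this description applies to both. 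Moreover, the index set $A$, the representatives $g_x,g_y,g_z$, the elements $k_1,k_2,k_3$, and the subgroups involved depend only on $(G,K)$, not on the coefficients.

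First, I use that the coefficient map $p_i:R^d\to R_i$ is a ring homomorphism of discrete trivial modules. Hence the induced maps $H^*(H,R^d)\to H^*(H,R_i)$ commute with restriction, corestriction, and conjugation $[k]^*$ for open subgroups $H\subset K$, because all of these are natural in the coefficient module. They are also multiplicative for cup products, since $p_i$ is a ring map and the cup product is induced by the multiplication of coefficients. Applying $p_i$ summand by summand then gives
\[
p_i(h_1*h_2)=p_i(h_1)*p_i(h_2)
\]
for homogeneous generators $h_{\alpha,x},h_{\beta,y}$. Bilinearity extends this to all elements, and the unit (the characteristic function of $K$ in degree $0$) is clearly preserved.

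Next, the maps to the various $R_i$ are compatible with the transition maps $R_j\to R_i$, which are themselves ring maps, so the same argument applies to them. Since $\widehat{\mathscr{H}}(G,K)_R$ carries the degreewise inverse-limit product, the induced map $\mathscr{H}(G,K)_R\to\widehat{\mathscr{H}}(G,K)_R$ is multiplicative: its components are multiplicative, and multiplication in the limit is computed componentwise. For the diagram of Proposition~\ref{proposition7.1}, the embedding $\overline{\mathscr{H}}\hookrightarrow\widehat{\mathscr{H}}$ is a ring map by the definition of the product on $\overline{\mathscr{H}}$ as a subring. Since this embedding is injective and the diagram commutes, the map $\rho:\mathscr{H}(G,K)_R\to\overline{\mathscr{H}}(G,K)_R$ is also multiplicative.

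The main obstacle is to justify cleanly that the identification of the Yoneda product with Venkatesh's formula is itself natural in the coefficient ring. Concretely, the Shapiro isomorphisms $\operatorname{Sh}^K_{g_x}$ and the function-model identification must commute with change of coefficients $R\to R_i$. I would verify this by noting that the Shapiro maps are built from restriction and the evaluation $\operatorname{c-Ind}_{K_x}^K(S)\to S$, both of which are natural in $S$. If this turns out to be delicate, an alternative is to observe directly that $R_i\otimes^{\mathbf L}_R-$ induces a functor $D(\mathrm{SM}_R(G))\to D(\mathrm{SM}_{R_i}(G))$ sending $R[G/K]$ to $R_i[G/K]$, since $R[G/K]$ is free over $R$. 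Functors preserve composition, so this yields multiplicativity for the Yoneda product directly; one then checks that the induced map on $\operatorname{Ext}$ agrees with the coefficient map in group cohomology.
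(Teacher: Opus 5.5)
Your proposal is correct and follows essentially the paper's argument. The coefficient map commutes with restriction, corestriction, conjugation and cup products, so it preserves the group-cohomological product formula summand by summand, and multiplicativity into $\widehat{\mathscr H}(G,K)_R$ then follows from compatibility with the transition maps. Your derivation that $\rho$ is multiplicative, using injectivity of $\overline{\mathscr H}(G,K)_R\hookrightarrow\widehat{\mathscr H}(G,K)_R$ together with commutativity of the diagram, supplies the ``in particular'' step that the paper's proof leaves implicit.
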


\begin{proof}
	The map
	\[
	\mathscr{H}(G,K)_R
	\longrightarrow
	\mathscr{H}(G,K)_{R_i}
	\]
	is induced by the map between the coefficient rings.
	The resulting maps on group cohomology commute with corestriction,
	restriction,
	conjugation maps,
	and cup products.
	It therefore follows from the product formula above that this map is a homomorphism of graded rings.
	
	Moreover,
	these maps are compatible with the transition maps of the inverse system.
	Hence the induced map
	\[
	\mathscr{H}(G,K)_R
	\longrightarrow
	\widehat{\mathscr{H}}(G,K)_R
	\]
	is also a homomorphism of graded rings.
\end{proof}

\subsection{Product Structure}

Let $R$ be a torsion-free profinite ring,
and suppose that,
for every $n\geqq 2$ and every $x\in K\backslash G/K$,
\[
	H^n(K_x,R^p)\otimes_{\mathbb{Z}}\mathbb{Q}
		\cong	H^n(K_x,R^p\otimes_{\mathbb{Z}}\mathbb{Q}).
\]
Set
\[
	X^0:=\overline{\mathscr H}^{\,0}(G,K)_R,
		~X^1:=0,
		~X^r:=\bigl(\overline{\mathscr H}^{\,r}(G,K)_R\bigr)_{\operatorname{tor}}
		~(r\geqq2),
\]
and define
\[
	X:=\bigoplus_{n\geqq0}X^n.
\]
We define the product on $X$ to be the product induced on the inverse limit
by the products on the algebras
$\mathscr H(G,K)_{R_i}$,
and denote this product by $*_p$.

For every $n\geqq0$,
let
\[
\pi^n:
\mathscr H^n(G,K)_R
\longrightarrow
X^n
\]
denote the map obtained above,
and define
\[
\pi:=\bigoplus_{n\geqq0}\pi^n.
\]
Set
\[
Y^r:=\ker(\pi^r).
\]
Then
\[
Y^0=Y^1=0.
\]
We further define
\[
Y:=\bigoplus_{n\geqq0}Y^n.
\]

The map
\[
\pi:
\mathscr H(G,K)_R
\longrightarrow
X
\]
is a surjective homomorphism of graded rings,
and hence there exists a short exact sequence
\[
0
\longrightarrow
Y
\longrightarrow
\mathscr H(G,K)_R
\xrightarrow{\pi}
X
\longrightarrow
0.
\]
By the preceding discussion,
each $Y^r$ is divisible.
Therefore,
the short exact sequence above splits as a sequence of graded abelian groups.

Fix a degree-preserving additive section
\[
s:
X
\longrightarrow
\mathscr H(G,K)_R.
\]
For $a,b\in X$,
define the map $d_s$ by
\begin{align*}
	d_s:
	X\times X
	&\longrightarrow
	\mathscr H(G,K)_R,
	\\
	(a,b)
	&\longmapsto
	s(a)\circ s(b)-s(a*_pb).
\end{align*}
Since $\pi$ is a homomorphism of graded rings,
we have
\[
d_s(a,b)\in Y.
\]

The following describes the product in
$\mathscr H(G,K)_R$.

\begin{theorem}\label{Theorem4}
	Let $R$ be a torsion-free profinite ring,
	and suppose that,
	for every $n\geqq2$ and every
	$x\in K\backslash G/K$,
	\[
	H^n(K_x,R^p)\otimes_{\mathbb Z}\mathbb Q
	\cong
	H^n(K_x,R^p\otimes_{\mathbb Z}\mathbb Q).
	\]
	Let
	\[
	h_1\in\mathscr H^i(G,K)_R,~	
	h_2\in\mathscr H^j(G,K)_R.
	\]
	Then the following statements hold.
	\begin{enumerate}
		\item
		The image of $h_1\circ h_2$ in $X$ is given by
		\[
		\pi^{i+j}(h_1\circ h_2)
		=
		\pi^i(h_1)*_p\pi^j(h_2).
		\]
		
		\item
		Suppose that $i,j>0$.
		Then
		\[
		h_1\circ h_2
		=
		s\bigl(
		\pi^i(h_1)*_p\pi^j(h_2)
		\bigr)
		+
		d_s\bigl(
		\pi^i(h_1),
		\pi^j(h_2)
		\bigr).
		\]
		
		\item
		Suppose that $i,j>0$ and that
		$h_1\in Y^i$ or $h_2\in Y^j$.
		Then
		\[
		h_1\circ h_2=0.
		\]
	\end{enumerate}
\end{theorem}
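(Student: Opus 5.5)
Part (1) is immediate from the fact that $\pi$ is a homomorphism of graded rings. By the last proposition of the preceding subsection, $\mathscr H(G,K)_R\to\widehat{\mathscr H}(G,K)_R$ is a homomorphism of graded rings and factors through the subring $\overline{\mathscr H}(G,K)_R$ (Proposition~\ref{proposition7.1}). By definition, $*_p$ on $X$ is the restriction of the product of $\overline{\mathscr H}(G,K)_R$. Hence $\rho(h_1\circ h_2)=\rho(h_1)*_p\rho(h_2)$. Since $\rho$ lands in $X$ in each degree, restricting codomains gives (1).

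The step needing a little care is checking that the image of $\rho$ is indeed $X$, so that $\pi$ is well defined and surjective. I would take this as established in the preceding discussion. In each double-coset component $x$ and each degree $r\geqq 2$, Propositions~\ref{proposition1-1} and~\ref{proposition1-2}, applied to the profinite group $K_x$, identify the image of $H^r(K_x,R^d)$ in $\varprojlim_i H^r(K_x,R_i)\cong H^r(K_x,R^p)$ with the torsion subgroup. Since only finitely many $x$ occur, taking the direct sum over $x$ preserves this description. Degrees $0$ and $1$ are handled by Proposition~\ref{proposition3}.

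For (2), write $a=\pi^i(h_1)$ and $b=\pi^j(h_2)$. Decompose $h_1=s(a)+y_1$ and $h_2=s(b)+y_2$ with $y_1\in Y^i$ and $y_2\in Y^j$; this is possible because $\pi\circ s=\mathrm{id}$. Bilinearity of the Yoneda product gives
\[
h_1\circ h_2=s(a)\circ s(b)+s(a)\circ y_2+y_1\circ s(b)+y_1\circ y_2 .
\]
By definition, $s(a)\circ s(b)=s(a*_pb)+d_s(a,b)$. So (2) reduces to showing that the three cross terms vanish when $i,j>0$. This vanishing is exactly statement (3) applied to the pairs $(s(a),y_2)$, $(y_1,s(b))$ and $(y_1,y_2)$, so I would prove (3) first.

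For (3), the key point is that each $Y^n$ is divisible and every element of $\mathscr H^m(G,K)_R$ with $m\geqq 1$ is torsion. Indeed, $\mathscr H^m(G,K)_R\cong\bigoplus_x H^m(K_x,R^d)$, and each $H^m(K_x,R^d)$ with $m\geqq 1$ is torsion because it is the cohomology of a profinite group with discrete coefficients. Suppose $h_1\in Y^i$ and $h_2$ is killed by $N\geqq 1$. Write $h_1=Nh_1'$, which is possible by divisibility. Then $h_1\circ h_2=h_1'\circ(Nh_2)=0$ by bilinearity of the Yoneda product. The case $h_2\in Y^j$ is symmetric, using that $h_1$ is torsion.

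The main obstacle is making sure divisibility of $Y^n$ in the ambient group is genuinely available, since the argument needs the divisor $h_1'$ to exist in $\mathscr H(G,K)_R$, not merely in some quotient. I would justify it componentwise via Proposition~\ref{proposition1-1}(2). There, $\ker H^n(f)\cong H^{n-1}(K_x,R^p)\otimes\mathbb Q/\mathbb Z$, which is divisible, and $Y^n$ is the direct sum over $x$ of these kernels. This identification is available because, in each component, $\rho$ agrees with $H^n(f)$ followed by the isomorphism $\lambda^n$, by Proposition~\ref{proposition1-2}. This is the same mechanism as Proposition~\ref{proposition1-1}(3), now phrased for the Yoneda product, using only its bilinearity rather than a factorization through a cup product.
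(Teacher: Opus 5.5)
Your proposal is correct and follows essentially the same route as the paper, which gives no separate proof and relies on the surrounding discussion. There, (1) comes from $\pi$ being the codomain restriction of the graded ring homomorphism $\mathscr H(G,K)_R\to\overline{\mathscr H}(G,K)_R\subset\widehat{\mathscr H}(G,K)_R$; (3) comes from the $\mathbb Z$-bilinearity of the Yoneda product together with the fact that a divisible group paired against a torsion group gives zero (the mechanism of Proposition~\ref{proposition1-1}(3) and of the discussion of the action on Ext groups); and (2) comes from writing $h=s(\pi(h))+y$ with $y\in Y$ and killing the cross terms by (3). Like the paper, you tacitly use that the comparison maps $\lambda^n$ are isomorphisms, and for $n=2$ your appeal to Proposition~\ref{proposition1-1}(2) needs the degree-one rationalization, which holds automatically because a continuous homomorphism from a compact group to $(R\otimes_{\mathbb Z}\mathbb Q)^p$ has image in $\frac1N R$ for some $N\geqq 1$.
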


We also obtain the following result concerning the action.

\begin{theorem}
	Let $R$ be a torsion-free profinite ring,
	and suppose that,
	for every $n\geqq2$ and every
	$x\in K\backslash G/K$,
	\[
	H^n(K_x,R^p)\otimes_{\mathbb Z}\mathbb Q
	\cong
	H^n(K_x,R^p\otimes_{\mathbb Z}\mathbb Q).
	\]
	Let $M$ be a complex in the category of smooth $R[G]$-modules,
	and set
	\[
	\mathscr M
	:=
	 \operatorname{Ext}^*(R[G/K],M),~
	\mathscr M_{ \operatorname{tor}}
	:=
	\bigl(
	 \operatorname{Ext}^*(R[G/K],M)
	\bigr)_{ \operatorname{tor}}.
	\]
	Then $\mathscr M_{ \operatorname{tor}}$ is stable under the action of
	$\mathscr H(G,K)_R$,
	and the restricted action map factors through
	$X\otimes_R\mathscr M_{ \operatorname{tor}}$.
	
	Furthermore,
	the action of elements of positive degree factors as
	\[
	\begin{tikzcd}
		\mathscr H^{>0}(G,K)_R
		\otimes_R
		\mathscr M_{ \operatorname{tor}}
		\ar[dr]
		\ar[r,"\pi^{>0}\otimes\operatorname{id}"]
		&
		X^{>0}
		\otimes_R
		\mathscr M_{ \operatorname{tor}}
		\ar[d]
		\\
		{}
		&
		\mathscr M_{ \operatorname{tor}}.
	\end{tikzcd}
	\]
\end{theorem}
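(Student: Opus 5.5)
The plan is to reduce both assertions to a torsion-versus-divisible argument, combined with the bilinearity of the action and the structure of $\mathscr H(G,K)_R$ established before Theorem~\ref{Theorem4}.

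\textbf{Stability of $\mathscr M_{\operatorname{tor}}$.} Since $R$ is commutative and $R$ acts centrally on all Ext groups, the action
\[
\mathscr M\times\mathscr H(G,K)_R\to\mathscr M,\qquad (m,h)\mapsto m\circ h,
\]
is $R$-bilinear, hence $\mathbb Z$-bilinear. If $nm=0$, then $n(m\circ h)=(nm)\circ h=0$. Thus $\mathscr M_{\operatorname{tor}}$ is stable under the action, and the restricted action factors through $\mathscr H(G,K)_R\otimes_R\mathscr M_{\operatorname{tor}}$, exactly as in the commutative triangle displayed in the previous section.

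\textbf{Factorization through $X$.} Next use the short exact sequence
\[
0\to Y\to\mathscr H(G,K)_R\xrightarrow{\pi}X\to0
\]
of graded rings and abelian groups. Right exactness of $-\otimes_R\mathscr M_{\operatorname{tor}}$ gives an exact sequence
\[
Y\otimes_R\mathscr M_{\operatorname{tor}}\to\mathscr H(G,K)_R\otimes_R\mathscr M_{\operatorname{tor}}\to X\otimes_R\mathscr M_{\operatorname{tor}}\to0.
\]
Hence it suffices to show that $Y$ acts by zero on $\mathscr M_{\operatorname{tor}}$. Let $y\in Y^r$ and $m\in\mathscr M_{\operatorname{tor}}$ with $nm=0$. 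Because $Y^r$ is divisible (this was established before Theorem~\ref{Theorem4}, via Corollary~\ref{theorem2} and Proposition~\ref{proposition4}), we can write $y=ny'$ with $y'\in Y^r$. Then
\[
m\circ y=m\circ(ny')=(nm)\circ y'=0.
\]
Equivalently, in the tensor product, $y\otimes m=y'\otimes nm=0$, since $Y\otimes_{\mathbb Z}\mathscr M_{\operatorname{tor}}$ is a tensor product of a divisible group with a torsion group. This step uses only $\mathbb Z$-multiples, so it does not matter whether we tensor over $R$ or over $\mathbb Z$. The action therefore factors through $X\otimes_R\mathscr M_{\operatorname{tor}}$. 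The $R$-module structure on $X$ comes from $\pi$, which is $R$-linear because it is induced by coefficient maps.

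\textbf{Positive-degree statement.} Since $\pi$ preserves degree, restrict the exact sequence to positive degrees: $0\to Y^{>0}\to\mathscr H^{>0}(G,K)_R\to X^{>0}\to0$. The same argument then shows that the map
\[
\mathscr H^{>0}(G,K)_R\otimes_R\mathscr M_{\operatorname{tor}}\to\mathscr M_{\operatorname{tor}}
\]
kills the image of $Y^{>0}\otimes\mathscr M_{\operatorname{tor}}$. It therefore descends through $\pi^{>0}\otimes\mathrm{id}$, which gives the triangle.

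\textbf{Main obstacle.} The main point to check is that the divisibility of $Y$ used above holds under the stated hypothesis, which is only for $n\geqq2$. I would verify it degree by degree. First, $Y^0=Y^1=0$. Next, for $r\geqq2$, identify $Y^r$ with $\bigoplus_x\ker H^r(f)$ at each double coset $x$. By Proposition~\ref{proposition1-1}(2), this kernel is a quotient of $H^{r-1}(K_x,(R\otimes\mathbb Q)^p)$, and a quotient of a $\mathbb Q$-vector space is divisible. This holds even for $r=2$, where no comparison in degree $1$ is needed.

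I also need to confirm that $\ker\pi^r$ equals this kernel, so that $Y^r$ is identified correctly. For this I would use that $\rho$ factors through $H^r(f)$ followed by $\lambda^r$, by Proposition~\ref{proposition1-2}. I would take $\lambda^r$ to be an isomorphism, as assumed in the comparison hypotheses of the introduction. This identification is the only step where I expect some care to be needed.
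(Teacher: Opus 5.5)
Your proof is correct and follows essentially the paper's approach: the paper's proof is the single observation that the divisible ideal $Y$ acts trivially on torsion classes, and you make explicit the bilinearity, the right-exactness step, and the identification $\ker\pi^r=\bigoplus_x\ker H^r(f)$ via injectivity of $\lambda^r$. That injectivity is the introduction's comparison hypothesis, which the paper also uses implicitly; your further remark is correct and slightly sharper than the paper: $\ker H^r(f)$ is divisible for every $r\geqq 2$ with no rationalization hypothesis at all, since it is a quotient of the $\mathbb Q$-vector space $H^{r-1}(K_x,(R\otimes_{\mathbb Z}\mathbb Q)^p)$.
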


\begin{proof}
	Since the divisible component acts trivially on the torsion part,
	the desired factorization follows.
\end{proof}

\section{Preliminaries for the examples}
\subsection{Preliminaries}

Before computing the examples,
we prove a useful property of the continuous group cohomology $H^*(K_x,R\otimes_\mathbb{Z}\mathbb{Q})$.

Let $K$ be a profinite group,
and let $R$ be a torsion-free profinite ring with a continuous $K$-action.
We consider $R\otimes_\mathbb{Z}\mathbb{Q}$ with the ind-profinite topology defined below.
All cochains in this section are assumed to be continuous.
The following lemma does not require torsion-freeness.

\begin{lemma}
	Let $K$ be a profinite group,
	and let $A$ be an ind-profinite $K$-module.
	Suppose that
	\[
		A=\varinjlim_{i\in \mathbb{N}}A_i
	\]
	for a cofinal direct system $(A_i,\phi_{ij},\mathbb{N})$ of profinite $K$-modules.
	Assume that each canonical map $A_i\rightarrow A$ is a topological embedding.
	Then,
	for every $n\geqq 0$,
	there is a natural isomorphism
	\[
		H^n(K,A)\cong \varinjlim_{i\in \mathbb{N}}H^n(K,A_i).
	\]
\end{lemma}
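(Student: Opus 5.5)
The plan is to work at the level of continuous cochain complexes and show that the continuous cochain functor commutes with this particular colimit. Write $C^n(K,M)$ for continuous maps $K^n\to M$. The maps $A_i\to A$ give cochain maps $C^\bullet(K,A_i)\to C^\bullet(K,A)$, compatible with the $\phi_{ij}$. Hence we get a natural cochain map
\[
	\Phi:\varinjlim_{i}C^\bullet(K,A_i)\longrightarrow C^\bullet(K,A).
\]
Since filtered colimits of abelian groups are exact, they commute with taking cohomology. So it suffices to show that $\Phi$ is an isomorphism of complexes, degree by degree.

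\emph{Injectivity.} Each $A_i\to A$ is a topological embedding, in particular injective. Hence each $C^n(K,A_i)\to C^n(K,A)$ is injective. An element of the colimit is represented at some finite stage, so it maps to zero only if its representative does. Therefore $\Phi$ is injective in each degree.

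\emph{Surjectivity.} This is the main step. Let $c:K^n\to A$ be continuous. The source $K^n$ is compact, so $c(K^n)$ is a compact subset of $A$. I would use the standard fact about ind-profinite (or more generally, strict countable inductive limits of compact Hausdorff spaces) topologies: every compact subset of $A=\varinjlim A_i$ is contained in the image of some $A_i$.

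To prove this fact, I would argue by contradiction in the usual way. Suppose a compact set $C$ meets $A\setminus A_{i}$ for every $i$. Choose points $a_i\in C\setminus A_i$. The set $\{a_i\}$ meets each $A_j$ in only finitely many points, because the $A_i$ are increasing. Each $A_j$ is Hausdorff, so every subset of $\{a_i\}$ is closed in every $A_j$. By the colimit topology, every such subset is then closed in $A$. Thus $\{a_i\}$ is an infinite discrete closed subset of the compact set $C$, which is a contradiction.

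Here the hypotheses that the direct system is indexed by $\mathbb{N}$ and is cofinal are used, together with the requirement that the topology on $A$ is the inductive-limit topology. If the paper's ``ind-profinite topology'' is only described as making each $A_i$ embed with open image (as in the case $R\otimes\mathbb Q$, where each $\frac1{m}R$ is open), the argument is simpler. The open cover $\{A_i\}$ of $c(K^n)$ has a finite subcover, hence some single $A_i$ suffices. I would include this remark since it is the case needed later.

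Once $c(K^n)\subset A_i$, the map $c$ factors set-theoretically through $A_i$. Because $A_i\to A$ is a topological embedding, the factored map $K^n\to A_i$ is continuous. It is a cochain, and $K$-equivariance is automatic for inhomogeneous cochains. So $c$ lies in the image of $C^n(K,A_i)$, and $\Phi$ is surjective.

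Finally, I will check that $\Phi$ commutes with the coboundaries and is natural in the system. Both follow directly because the maps $A_i\to A$ are $K$-equivariant. Passing to cohomology then gives $H^n(K,A)\cong\varinjlim_i H^n(K,A_i)$ naturally.
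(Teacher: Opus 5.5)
Your proof is correct and follows the paper's argument. The paper also shows that $\varinjlim_i C^\bullet(K,A_i)\to C^\bullet(K,A)$ is an isomorphism of complexes and then uses exactness of filtered colimits: injectivity comes from each $A_i\to A$ being injective, and surjectivity from the compact set $f(K^n)$ lying in a single $A_i$ together with $A_i\to A$ being a topological embedding.

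The only difference is that the paper cites \cite[Corollary~1.6(i)]{boggi2016continuous} for the fact that compact subsets of the colimit lie in some stage, whereas you prove it directly with the standard closed-discrete-sequence argument. That argument is valid for the colimit topology and is the part that matters. Your open-image shortcut is not available in general: under the direct-limit topology used later, $R$ need not be open in $R\otimes_{\mathbb Z}\mathbb Q$ (e.g.\ $R=\prod_{\mathbb N}\mathbb Z_p$).
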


\begin{proof}
	The maps $\phi_{ij}:A_i\rightarrow A_j$ induce maps of continuous cochain complexes
	\[
		C(\phi_{ij}):C^*(K,A_i)\rightarrow C^*(K,A_j).
	\]
	Thus $(C^*(K,A_i))_i$ forms a direct system of complexes,
	with canonical maps
	\[
		C^*(K,A_i)\rightarrow \varinjlim_i C^*(K,A_i).
	\]
	For each $i\in \mathbb{N}$,
	the canonical map $\phi_i:A_i\rightarrow A$ also induces a map
	\[
		C(\phi_i):C^*(K,A_i)\rightarrow C^*(K,A).
	\]
	These maps are compatible with the transition maps.
	By the universal property of the direct limit,
	they induce a map
	\[
		C(\phi):\varinjlim_{i\in \mathbb{N}}C^*(K,A_i)\rightarrow C^*(K,A).
	\]
	We show that $C(\phi)$ is an isomorphism of complexes.
	
	Let $f\in C^n(K,A)$.
	Since $K^n$ is compact and $f$ is continuous,
	the image $f(K^n)$ is compact.
	By \cite[Corollary~1.6(i)]{boggi2016continuous},
	there is an $i$ such that
	\[
		f(K^n)\subset \phi_i(A_i).
	\]
	Since $\phi_i$ is a topological embedding,
	the map
	\[
		\phi_i:A_i\rightarrow \phi_i(A_i)
	\]
	is a homeomorphism.
	Viewing $f$ as a map with values in $\phi_i(A_i)$,
	we obtain a continuous map
	\[
		f_i:=\phi_i^{-1}\circ f:K^n\rightarrow A_i.
	\]
	Hence $f_i \in C^n(K,A_i)$ and $f=\phi_i\circ f_i$.
	This proves that $C(\phi)$ is surjective in each degree.
	
	To prove injectivity,
	let $f=[f_i]$ be an element of the kernel of $C(\phi)$ in degree $n$,
	with representative $f_i \in C^n(K,A_i)$.
	Then $C(\phi_i)(f_i)=0$.
	Since $\phi_i$ is injective,
	we have $f_i=0$ and hence $f=0$.
	Thus $C(\phi)$ is also injective in each degree.
	We have therefore obtained an isomorphism of complexes
	\[
		\varinjlim_{i\in \mathbb{N}}C^*(K,A_i)\cong C^*(K,A).
	\]
	Filtered colimits are exact in the category of abelian groups and therefore commute with cohomology.
	It follows that,
	for every $n\geqq 0$,
	\begin{align*}
		H^n(K,A)
			&\cong H^n(C^*(K,A))\\
			&\cong H^n(\varinjlim_{i\in \mathbb{N}}C^*(K,A_i))\\
			&\cong \varinjlim_{i\in \mathbb{N}}H^n(C^*(K,A_i))\\
			&\cong \varinjlim_{i\in \mathbb{N}}H^n(K,A_i).
	\end{align*}
\end{proof}

We next consider the functor $-\otimes_{\mathbb{Z}}\mathbb{Q}$.
Recall that
\[
	\mathbb{Q}=\varinjlim_{m\geqq 1}\frac{1}{m!}\mathbb{Z}.
\]
Applying the preceding lemma gives the following result.

\begin{lemma}
	Let $R$ be a torsion-free profinite ring with a continuous $K$-action.
	For each $m\geqq 1$,
	put $R_m:=\frac{1}{m!}R$ and give $R_m$ the profinite topology transported from $R$ by the isomorphism of additive groups
	\[
		R\xrightarrow{\sim} R_m~;~r\mapsto \frac{r}{m!}.
	\]
	Equip $R\otimes_{\mathbb{Z}}\mathbb{Q}$ with the direct limit topology defined by the sequence $(R_m)_{m\geqq 1}$.
	Then,
	for every $n\geqq 0$,
	there is a natural isomorphism
	\[
		H^n(K,R\otimes_{\mathbb{Z}}\mathbb{Q})\cong H^n(K,R^p)\otimes_{\mathbb{Z}}\mathbb{Q}.
	\]
\end{lemma}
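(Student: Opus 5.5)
The plan is to apply the preceding lemma to the direct system $(R_m)_{m\geqq 1}$, with $R_m=\frac{1}{m!}R$, and then identify the colimit of the cohomology groups with $H^n(K,R^p)\otimes_{\mathbb Z}\mathbb Q$.

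First I check the hypotheses of the lemma. Since $R$ is torsion-free, each $R_m$ embeds in $R\otimes_{\mathbb Z}\mathbb Q$ via $r/m!\mapsto r\otimes\frac{1}{m!}$. The transition map $R_m\to R_{m+1}$ is the inclusion. Under the identifications $R\cong R_m$ and $R\cong R_{m+1}$ it becomes multiplication by $m+1$ on $R$, which is continuous. It is also injective by torsion-freeness. Since $R$ is compact Hausdorff, this map is a closed topological embedding. Hence the direct limit topology on $R\otimes_{\mathbb Z}\mathbb Q=\bigcup_m R_m$ is defined, and the $R_m$ are $K$-stable because the $K$-action is additive and continuous. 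I would then argue that each $R_m\to R\otimes_{\mathbb Z}\mathbb Q$ is a topological embedding. For an increasing union of compact Hausdorff spaces along closed embeddings, the colimit topology restricts to the original topology on each stage: a subset closed in $R_m$ has closed intersection with every $R_{m'}$, because $R_m$ is closed in $R_{m'}$ for $m'\geqq m$ and the intersection is a continuous preimage for $m'<m$. This step, together with the compactness statement used in the lemma (the reference to \cite{boggi2016continuous}), is where I expect the only real topological subtlety. I would handle it by this closed-embedding argument.

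Applying the lemma then gives
\[
H^n(K,R\otimes_{\mathbb Z}\mathbb Q)\cong\varinjlim_m H^n(K,R_m).
\]
Next I transport along the topological isomorphisms $R\xrightarrow{\sim}R_m$ given by $r\mapsto r/m!$. These isomorphisms are $K$-equivariant, so $H^n(K,R_m)\cong H^n(K,R^p)$. Under them the transition map $H^n(K,R_m)\to H^n(K,R_{m+1})$ becomes multiplication by $m+1$ on $H^n(K,R^p)$. This holds because the induced cochain map is multiplication by the integer $m+1$, which commutes with the cochain differentials.

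It remains to identify the colimit. The direct system
\[
H^n(K,R^p)\xrightarrow{2}H^n(K,R^p)\xrightarrow{3}H^n(K,R^p)\xrightarrow{4}\cdots
\]
has colimit $H^n(K,R^p)\otimes_{\mathbb Z}\varinjlim_m\frac{1}{m!}\mathbb Z$. Indeed, tensor products commute with direct limits, and $\frac{1}{m!}\mathbb Z\cong\mathbb Z$ with transition maps given by multiplication by $m+1$. Since $\varinjlim_m\frac{1}{m!}\mathbb Z=\mathbb Q$, this colimit equals $H^n(K,R^p)\otimes_{\mathbb Z}\mathbb Q$.

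Finally, I would check naturality. The composite isomorphism is induced at the cochain level by $f\otimes\frac{1}{m!}\mapsto$ ``the cochain $f$ viewed in $R_m$ and divided by $m!$''. This agrees with the natural comparison map $H^n(K,R^p)\otimes\mathbb Q\to H^n(K,R\otimes\mathbb Q)$ induced by $R^p\hookrightarrow R\otimes\mathbb Q$ and the $\mathbb Q$-linear structure. That confirms the isomorphism is the natural one.
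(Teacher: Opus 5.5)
Your proposal is correct and follows essentially the same route as the paper: apply the preceding ind-profinite colimit lemma to $R_m=\frac{1}{m!}R$, identify each $H^n(K,R_m)$ with $H^n(K,R^p)$ so that the transition maps become multiplication by $m+1$, and recognize the colimit as $H^n(K,R^p)\otimes_{\mathbb Z}\mathbb Q$. You add two checks the paper leaves implicit. The first is that each canonical map $R_m\to R\otimes_{\mathbb Z}\mathbb Q$ is a topological embedding, which the preceding lemma assumes but the paper only verifies for the transition maps. The second is that the resulting isomorphism is the natural comparison map.
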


\begin{proof}
	Under the isomorphisms $R\cong R_m$ above,
	the inclusion $R_i\rightarrow R_j$ for $i\leqq j$ corresponds to multiplication by $\frac{j!}{i!}$ on $R$.
	Since $R$ is torsion-free,
	this map is injective.
	It is a continuous injection between compact Hausdorff spaces and is therefore a topological embedding.
	
	Moreover,
	\[
		R\otimes_{\mathbb{Z}}\mathbb{Q}=\bigcup_{m\geqq 1}R_m.
	\]
	Thus $(R_m)_{m\geqq 1}$ is a cofinal sequence for $R\otimes_{\mathbb{Z}}\mathbb{Q}$ with respect to the direct limit topology defined above.
	By the preceding lemma,
	for every $n\geqq 0$ we have
	\begin{align*}
		H^n(K,R\otimes_\mathbb{Z}\mathbb{Q})
			&	\cong H^n\left(K,\varinjlim_{m\geqq 1}R_m\right)\\
			&	\cong \varinjlim_{m\geqq 1}H^n(K,R_m).
	\end{align*}
	Under the identifications $H^n(K,R_m)\cong H^n(K,R^p)$,
	the map induced by $R_i\rightarrow R_j$ is also multiplication by $\frac{j!}{i!}$.
	Hence the direct system on the right is identified with
	\[
		H^n(K,R^p) \xrightarrow{\times 2}H^n(K,R^p) \xrightarrow{\times 3} H^n(K,R^p)\rightarrow \cdots.
	\]
	Its direct limit is $H^n(K,R^p)\otimes_{\mathbb{Z}}\mathbb{Q}$,
	which proves the assertion.
\end{proof}

\subsection{Multiplication for abelian locally profinite groups}

Let $G$ be an abelian locally profinite group,
and let $K$ be an open compact subgroup of $G$.
In this subsection,
$R$ is a commutative ring with trivial $G$-action.
Then $K\backslash G/K=G/K$,
and $K_x=K$ for every $x\in G/K$.

For $x,y\in G/K$ and nonzero classes $\alpha,\beta\in H^*(K,R^d)$,
let $h_{\alpha,x}$ and $h_{\beta,y}$ denote the corresponding elements of the derived Hecke algebra.
By the product formula,
we have
\[
	h_{\alpha,x}*h_{\beta,y}(z,K)=\sum_{w\in G/K}h_{\alpha,x}(z,w)\cup h_{\beta,y}(w,K).
\]
Here $h_{\beta,y}(w,K)\neq 0$ if and only if $w=y$,
and $h_{\alpha,x}(z,y)\neq 0$ if and only if $z=yx$.
Thus the product vanishes outside the orbit indexed by $xy$,
and its value on that orbit is $\alpha\cup \beta$.
Therefore,
\[
	h_{\alpha,x}* h_{\beta,y}=h_{\alpha\cup \beta, xy}.
\]
For $n\geqq 0$,
$x\in G/K$,
and $\alpha \in H^n(K,R^d)$,
consider the assignment
\[
	(\alpha,x)\mapsto h_{\alpha,x}.
\]
Extending this assignment $R$-linearly gives an isomorphism of graded rings
\[
	R[G/K]\otimes_R H^*(K,R^d)\xrightarrow{\sim} \mathscr{H}(G,K)_R~;~x\otimes \alpha \mapsto h_{\alpha,x}.
\]
Here $R[G/K]$ is concentrated in degree $0$.

\section{The additive group $\mathbb{Q}_p$}

\subsection{Coefficients in $\mathbb{Z}_p$}
Fix a prime $p$.
Let $G=(\mathbb{Q}_p,+)$,
and let $K$ be a compact open subgroup of $G$.
Since $G$ is abelian,
we have $K\backslash G/K=G/K$ and $K_x=K$ for every $x\in G/K$.
Moreover,
$K$ is topologically isomorphic to $\mathbb{Z}_p$.
We fix such an isomorphism,
which gives
\[
	H^*(K,A)\cong H^*(\mathbb{Z}_p,A)
\]
for every discrete module $A$ with trivial action.

Let $R=\mathbb{Z}_p$.
Consider the short exact sequence of discrete modules
\[
	0 \rightarrow \mathbb{Z}_p^d \rightarrow \mathbb{Q}_p^d \rightarrow \mathbb{Q}_p/\mathbb{Z}_p\rightarrow 0.
\]
Since $\mathbb{Q}_p$ is a $\mathbb{Q}$-vector space,
we have $H^i(K,\mathbb{Q}_p^d)=0$ for $i\geqq 1$.
The associated long exact sequence therefore gives
\[
	H^n(K,R^d)\cong H^{n-1}(K,\mathbb{Q}_p/\mathbb{Z}_p)
\]
for $n\geqq 2$.

The $p$-cohomological dimension of $\mathbb{Z}_p$ is $1$,
so its cohomology with discrete $p$-primary torsion coefficients vanishes in degrees at least $2$.
For each degree $i$,
the inverse system
\[
	\left(H^i(K,\mathbb{Z}/p^m\mathbb{Z})\right)_i
\]
consists of finite groups and hence satisfies the Mittag--Leffler condition.
It follows that
\[
	H^i(K,R^p)\cong \varprojlim_{m} H^i(K,\mathbb{Z}/p^m\mathbb{Z}).
\]
In particular,
$H^i(K,R^p)=0$ for $i\geqq 2$.
Also,
\[
	H^1(K,R^p)
	\cong	\operatorname{Hom}_{\mathrm{cts}}(\mathbb{Z}_p,\mathbb{Z}_p^p)
	\cong \mathbb{Z}_p.
\]
Thus,
for profinite coefficients,
we have
\[
	H^0(K,R^p)\cong H^1(K,R^p)\cong\mathbb{Z}_p.
\]

For discrete coefficients,
let $f\in\operatorname{Hom}_{\mathrm{cts}}(\mathbb{Z}_p,\mathbb{Z}_p^d)$.
The image of $f$ is compact and discrete,
hence finite.
Since $\mathbb{Z}_p$ is torsion-free,
this image is zero.
Thus $H^1(K,R^d)=0$.
In degree $2$,
we have
\[
	H^2(K,R^d)
		\cong	H^1(\mathbb{Z}_p,\mathbb{Z}_p^p) \otimes_{\mathbb{Z}}(\mathbb{Q}_p/\mathbb{Z}_p)
		\cong \mathbb{Q}_p/\mathbb{Z}_p.
\]
Consequently,
\[
	H^i(K,R^d)
		\cong	\begin{cases}
			\mathbb{Z}_p & (i=0),\\
			0 & (i=1),\\
			\mathbb{Q}_p/\mathbb{Z}_p & (i=2),\\
			0 & (i\geqq 3).
		\end{cases}
\]
The derived Hecke algebra is therefore given,
as an abelian group,
by
\begin{align*}
	\mathscr{H}(G,K)_R
	&\cong	\left(\bigoplus_{x\in G/K}\mathbb{Z}_p\right)
		\oplus	\left(\bigoplus_{x\in G/K}\mathbb{Q}_p/\mathbb{Z}_p\right)\\
	&=\mathbb{Z}_p[G/K]
		\oplus(\mathbb{Q}_p/\mathbb{Z}_p)[G/K].
\end{align*}
In the notation introduced earlier,
\[
	X=X^0\cong\mathbb{Z}_p[G/K],~Y=Y^2\cong(\mathbb{Q}_p/\mathbb{Z}_p)[G/K].
\]
Since the only nonzero components occur in degrees $0$ and $2$,
the component
\[
	Y=Y^2=\mathscr{H}^2(G,K)_R
\]
is a square-zero ideal.

Equip $(\mathbb{Q}_p/\mathbb{Z}_p)[G/K]$ with the $\mathbb{Z}_p[G/K]$-module structure defined by
\[
	a[x]\circ b[y]=(ab)[x+y],
\]
where $a\in\mathbb{Z}_p$,
$b\in\mathbb{Q}_p/\mathbb{Z}_p$,
and $x,y\in G/K$.
Under the above identification,
multiplication is given by
\[
	(a_1,b_1)\cdot(a_2,b_2)=(a_1a_2,\ a_1\circ b_2+a_2\circ b_1),
\]
where $a_1,a_2\in\mathbb{Z}_p[G/K]$ and $b_1,b_2\in(\mathbb{Q}_p/\mathbb{Z}_p)[G/K]$.
Placing $\mathbb{Z}_p[G/K]$ in degree $0$ and $(\mathbb{Q}_p/\mathbb{Z}_p)[G/K]$ in degree $2$,
we obtain an isomorphism of graded rings
\[
	\mathscr{H}(G,K)_R \cong \mathbb{Z}_p[G/K]
		\oplus(\mathbb{Q}_p/\mathbb{Z}_p)[G/K]
\]
with this multiplication.

We next consider $\overline{\mathscr{H}}(G,K)_{\mathbb{Z}_p}$.
The calculation
\[
	H^i(K,R^p)\cong
	\begin{cases}
		\mathbb{Z}_p & (i=0,1),\\
		0 & (i\geqq 2)
	\end{cases}
\]
gives
\[
	\overline{\mathscr{H}}^i(G,K)_{\mathbb{Z}_p}
		\cong	\begin{cases}
			\bigoplus_{x\in G/K}\mathbb{Z}_p & (i=0,1),\\
			0 & (i\geqq 2).
		\end{cases}
\]
Since all components of degree at least $2$ vanish,
the product of any two degree-one elements is zero.
Together with the usual action of the degree-zero component,
this gives an isomorphism of graded rings
\[
	\overline{\mathscr{H}}(G,K)_{\mathbb{Z}_p}
		\cong \bigl(\mathbb{Z}_p[G/K]\bigr)[\varepsilon]/(\varepsilon^2).
\]
Here $\mathbb{Z}_p[G/K]$ is placed in degree $0$ and $\varepsilon$ in degree $1$.

The comparison map $\Phi$ is given degreewise by
\[
	\Phi^i:
	\begin{cases}
		\mathbb{Z}_p[G/K]
		\xrightarrow{\mathrm{id}}
		\mathbb{Z}_p[G/K] & (i=0),\\
		0\longrightarrow\mathbb{Z}_p[G/K] & (i=1),\\
		(\mathbb{Q}_p/\mathbb{Z}_p)[G/K]\longrightarrow 0 & (i=2),\\
		0\longrightarrow 0 & (i\geqq 3).
	\end{cases}
\]

Finally,
consider $\widehat{\mathscr{H}}(G,K)_R$.
For each $m\geqq 1$,
the module $\mathbb{Z}/p^m\mathbb{Z}$ is discrete and $p$-primary torsion.
Thus $H^i(K,\mathbb{Z}/p^m\mathbb{Z})=0$ for $i\geqq 2$,
while
\[
	H^1(K,\mathbb{Z}/p^m\mathbb{Z})
		\cong \operatorname{Hom}_{\mathrm{cts}} (\mathbb{Z}_p,\mathbb{Z}/p^m\mathbb{Z})
		\cong\mathbb{Z}/p^m\mathbb{Z}.
\]
It follows that
\[
	\mathscr{H}(G,K)_{\mathbb{Z}/p^m\mathbb{Z}}
		\cong \bigl((\mathbb{Z}/p^m\mathbb{Z})[G/K]\bigr) [\varepsilon]/(\varepsilon^2).
\]
Choosing the degree-one generators compatibly with the transition maps and taking inverse limits,
we obtain
\begin{align*}
	\widehat{\mathscr{H}}(G,K)_R
		&\cong \varprojlim_m \mathscr{H}(G,K)_{\mathbb{Z}/p^m\mathbb{Z}}\\
		&\cong \left( \varprojlim_m \bigl( (\mathbb{Z}/p^m\mathbb{Z})[G/K]\bigr) \right) [\varepsilon]/(\varepsilon^2).
\end{align*}
Again,
the coefficient ring on the right is placed in degree $0$ and $\varepsilon$ in degree $1$.
In particular,
\[
	\widehat{\mathscr{H}}^0(G,K)_R
		\cong
		\varprojlim_m
		\bigl((\mathbb{Z}/p^m\mathbb{Z})[G/K]\bigr).
\]
Since $G/K$ is an infinite discrete group, we have a strict inclusion
\[
	\mathbb{Z}_p[G/K]
		\subsetneq
		\varprojlim_m
		\bigl((\mathbb{Z}/p^m\mathbb{Z})[G/K]\bigr).
\]
Thus the natural map
\[
	\Psi:
		\overline{\mathscr{H}}(G,K)_R
		\hookrightarrow
		\widehat{\mathscr{H}}(G,K)_R
\]
is injective but not surjective.

\subsection{Coefficients in $\mathbb{Z}_\ell$}

Let $G=(\mathbb{Q}_p,+)$,
and let $K$ be a compact open subgroup of $G$.
Let $l\ne p$ be a prime, and put $R=\mathbb{Z}_\ell$.

We first compute $H^i(K,R^d)$.
The group $K$ is topologically isomorphic to $\mathbb{Z}_p$ and satisfies $\operatorname{cd}_l(K)=0$.
For every open subgroup $U\subseteq K$,
the quotient $K/U$ is a finite $p$-group.
Since $p$ is invertible in $\mathbb{Z}_\ell$,
the cohomology $H^*(K/U,\mathbb{Z}_\ell)$ is concentrated in degree $0$.
Passing to the direct limit over $U$,
we obtain
\[
	H^i(K,R^d)\cong
	\begin{cases}
		\mathbb{Z}_\ell & (i=0),\\
		0 & (i\geqq 1).
	\end{cases}
\]
Hence the derived Hecke algebra is also concentrated in degree $0$,
with
\[
	\mathscr{H}^i(G,K)_{\mathbb{Z}_\ell}
	\cong
	\begin{cases}
		\mathbb{Z}_\ell[G/K] & (i=0),\\
		0 & (i\geqq 1).
	\end{cases}
\]
Similarly,
\[
	\overline{\mathscr{H}}^i(G,K)_{\mathbb{Z}_\ell}
	\cong
	\begin{cases}
		\mathbb{Z}_\ell[G/K] & (i=0),\\
		0 & (i\geqq 1).
	\end{cases}
\]
Since the degree-zero component of $\Phi$ is the identity,
we obtain an isomorphism
\[
\Phi:
\mathscr{H}(G,K)_{\mathbb{Z}_\ell}
\xrightarrow{\sim}
\overline{\mathscr{H}}(G,K)_{\mathbb{Z}_\ell}.
\]

For the completed algebra, note that
$H^*(K,\mathbb{Z}/l^m\mathbb{Z})$
is concentrated in degree $0$, where it is isomorphic to
$\mathbb{Z}/l^m\mathbb{Z}$.
Thus
\begin{align*}
	\widehat{\mathscr{H}}(G,K)_{\mathbb{Z}_\ell}
	&\cong
	\varprojlim_m
	\bigl(H^*(K,\mathbb{Z}/l^m\mathbb{Z})[G/K]\bigr)\\
	&\cong
	\varprojlim_m
	\bigl((\mathbb{Z}/l^m\mathbb{Z})[G/K]\bigr).
\end{align*}
As in the preceding subsection, the infinitude of $G/K$ gives
\[
	\mathbb{Z}_\ell[G/K]
	\subsetneq
	\varprojlim_m
	\bigl((\mathbb{Z}/l^m\mathbb{Z})[G/K]\bigr).
\]
Therefore the natural map
\[
	\Psi:
	\overline{\mathscr{H}}(G,K)_{\mathbb{Z}_\ell}
	\hookrightarrow
	\widehat{\mathscr{H}}(G,K)_{\mathbb{Z}_\ell}
\]
is injective but not surjective.

\subsection{Coefficients in $\widehat{\mathbb{Z}}$}

Put
\[
R=\widehat{\mathbb{Z}}
=\varprojlim_n\mathbb{Z}/n\mathbb{Z},
\]
where the positive integers are ordered by divisibility.
We have
\[
R\otimes_{\mathbb{Z}}\mathbb{Q}\cong\mathbb{A}_f,
\]
where $\mathbb{A}_f$ is the ring of finite adeles of $\mathbb{Q}$.
There is a short exact sequence of discrete modules with trivial $K$-action
\[
0
\longrightarrow\widehat{\mathbb{Z}}^d
\longrightarrow\mathbb{A}_f^d
\longrightarrow\mathbb{Q}/\mathbb{Z}
\longrightarrow 0.
\]
We also have
\[
\widehat{\mathbb{Z}}
\otimes_{\mathbb{Z}}(\mathbb{Q}/\mathbb{Z})
\cong\mathbb{Q}/\mathbb{Z}
\cong\bigoplus_l\mathbb{Q}_l/\mathbb{Z}_\ell.
\]

Since $\mathbb{Z}_p$ is pro-$p$,
every continuous homomorphism from $\mathbb{Z}_p$ to this discrete module has finite $p$-group image.
Hence
\begin{align*}
	H^1\left(
	\mathbb{Z}_p,
	\widehat{\mathbb{Z}}
	\otimes_{\mathbb{Z}}(\mathbb{Q}/\mathbb{Z})
	\right)
	&\cong
	\operatorname{Hom}_{\mathrm{cts}}\left(
	\mathbb{Z}_p,
	\widehat{\mathbb{Z}}
	\otimes_{\mathbb{Z}}(\mathbb{Q}/\mathbb{Z})
	\right)\\
	&\cong
	\operatorname{Hom}_{\mathrm{cts}}
	(\mathbb{Z}_p,\mathbb{Q}_p/\mathbb{Z}_p)\\
	&\cong\mathbb{Q}_p/\mathbb{Z}_p,
\end{align*}
where the last isomorphism is given by evaluation at $1$.

Since $\mathbb{A}_f^d$ is a discrete $\mathbb{Q}$-vector space,
\[
	H^i(K,\mathbb{A}_f^d)=0~(i\geqq 1).
\]
The long exact sequence therefore gives
\[
	H^2(K,\widehat{\mathbb{Z}}^d) \cong\mathbb{Q}_p/\mathbb{Z}_p,
\]
and we obtain
\[
	H^i(K,\widehat{\mathbb{Z}}^d)\cong
	\begin{cases}
		\widehat{\mathbb{Z}} & (i=0),\\
		0 & (i=1),\\
		\mathbb{Q}_p/\mathbb{Z}_p & (i=2),\\
		0 & (i\geqq 3).
	\end{cases}
\]

For profinite coefficients,
the inverse system
\[
	\bigl(H^i(K,\mathbb{Z}/n\mathbb{Z})\bigr)_n
\]
consists of finite groups in each degree $i$ and hence satisfies the Mittag--Leffler condition.
Thus
\[
	H^i(K,\widehat{\mathbb{Z}}^p)
		\cong \varprojlim_n H^i(K,\mathbb{Z}/n\mathbb{Z}).
\]
Only the $p$-primary components contribute in degree $1$,
giving
\[
	H^1(K,\widehat{\mathbb{Z}}^p)\cong\mathbb{Z}_p.
\]
Consequently,
\[
	H^i(K,\widehat{\mathbb{Z}}^p)\cong
	\begin{cases}
		\widehat{\mathbb{Z}} & (i=0),\\
		\mathbb{Z}_p & (i=1),\\
		0 & (i\geqq 2).
	\end{cases}
\]
Since $G/K$ is infinite,
we again have a strict inclusion
\[
	\widehat{\mathbb{Z}}[G/K]\subsetneq \varprojlim_n \bigl( (\mathbb{Z}/n\mathbb{Z})[G/K]\bigr).
\]

Combining these calculations,
we obtain
\begin{align*}
	\mathscr{H}^i(G,K)_{\widehat{\mathbb{Z}}}
		&\cong
		\begin{cases}
			\widehat{\mathbb{Z}}[G/K] & (i=0),\\
			0 & (i=1),\\
			(\mathbb{Q}_p/\mathbb{Z}_p)[G/K] & (i=2),\\
			0 & (i\geqq 3),
		\end{cases}\\
	\overline{\mathscr{H}}^i(G,K)_{\widehat{\mathbb{Z}}}
		&\cong
		\begin{cases}
			\widehat{\mathbb{Z}}[G/K] & (i=0),\\
			\mathbb{Z}_p[G/K] & (i=1),\\
			0 & (i\geqq 2),
		\end{cases}\\
		\widehat{\mathscr{H}}^i(G,K)_{\widehat{\mathbb{Z}}}
		&\cong
		\begin{cases}
			\displaystyle
			\varprojlim_n
			\bigl((\mathbb{Z}/n\mathbb{Z})[G/K]\bigr)
			& (i=0),\\
			\displaystyle
			\varprojlim_m
			\bigl((\mathbb{Z}/p^m\mathbb{Z})[G/K]\bigr)
			& (i=1),\\
			0 & (i\geqq 2).
		\end{cases}
\end{align*}
In each of these algebras,
the product of any two elements of positive degree is zero.
The action of the degree-zero component on the positive-degree components is the usual group-module action through the projection to the $p$-component of the coefficient ring.
For $\widehat{\mathscr{H}}(G,K)_{\widehat{\mathbb{Z}}}$,
this action is obtained by taking the inverse limit of the corresponding actions with finite coefficients.

Under these identifications, the natural map
\[
\mathscr{H}(G,K)_R
\longrightarrow
\overline{\mathscr{H}}(G,K)_R
\]
is the identity in degree $0$ and zero in all positive degrees.

\section{An example: $\operatorname{GL}_2(\mathbb{Q}_p)$ with $\mathbb{Z}_p$-coefficients}

Unless otherwise stated,
we assume that $p>3$ throughout this section.
Let
\[
	G=\operatorname{GL}_2(\mathbb{Q}_p),~K=\operatorname{GL}_2(\mathbb{Z}_p).
\]
Then $K$ is a maximal compact open subgroup of $G$,
and the Cartan decomposition gives
\[
	G=\coprod_{\substack{a,b\in\mathbb{Z}\\a\geqq b}}K
	\begin{pmatrix}
		p^a & 0\\
		0 & p^b
	\end{pmatrix}K.
\]
Let $A$ denote the set of representatives appearing in this decomposition.
We have
\[
	\mathscr{H}(G,K)_R \cong \bigoplus_{x\in A}H^*(K_x,R^d).
\]
For
\[
	g_{a,b}=
	\begin{pmatrix}
		p^a & 0\\
		0 & p^b
	\end{pmatrix}\in A,
\]
we may write
\[
	g_{a,b}=p^b
	\begin{pmatrix}
		p^{a-b} & 0\\
		0 & 1
	\end{pmatrix},~
	g_{a,b}^{-1}=p^{-b}
	\begin{pmatrix}
		p^{b-a} & 0\\
		0 & 1
	\end{pmatrix}.
\]
Thus
\[
	g_{a,b}
	\begin{pmatrix}
		x & y\\
		z & w
	\end{pmatrix}g_{a,b}^{-1}
	=
	\begin{pmatrix}
		x & p^{a-b}y\\
		p^{b-a}z & w
	\end{pmatrix}.
\]
It follows that $K_{g_{a,b}}$ consists of the elements of $K$ whose $(1,2)$-entry belongs to $p^{a-b}\mathbb{Z}_p$.
In particular,
this subgroup depends only on $n=a-b$.
We therefore write
\[
	K_{g_{a,b}}=K_n:=
	\left\{
		\begin{pmatrix}
			x & y\\
			z & w
		\end{pmatrix}\in\operatorname{GL}_2(\mathbb{Z}_p)
		\;\middle|\;
		y\in p^n\mathbb{Z}_p
	\right\}.
\]

\subsection{The structure of $K_n$ for $p>2$}

In this subsection,
we assume only that $p>2$.
We first determine the abelianization $K_n^{\mathrm{ab}}$.
For $n=0$,
it is well known that
\[
	K_0^{\mathrm{ab}}=\operatorname{GL}_2(\mathbb{Z}_p)^{\mathrm{ab}}
		\cong\mathbb{Z}_p^\times.
\]
The following lemma extends this description to all $n\geqq 0$.

\begin{lemma}
	Suppose that $p>2$ and $n\geqq 0$.
	Then
	\[
		K_n^{\mathrm{ab}}
			\cong\mathbb{Z}_p^\times times(\mathbb{Z}/p^n\mathbb{Z})^\times.
	\]
	For $n=0$, the second factor is understood to be the trivial group.
\end{lemma}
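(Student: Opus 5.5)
We will construct an explicit surjective character of $K_n$ onto $\mathbb Z_p^\times\times(\mathbb Z/p^n\mathbb Z)^\times$ and then show that its kernel is contained in the (closed) commutator subgroup. For $n=0$ the statement is the well-known $K_0^{\mathrm{ab}}\cong\mathbb Z_p^\times$, so we assume $n\geqq 1$.

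\emph{Step 1: the character.} Define
\[
\chi:K_n\to\mathbb Z_p^\times\times(\mathbb Z/p^n\mathbb Z)^\times,\qquad
\begin{pmatrix}x&y\\ z&w\end{pmatrix}\mapsto(\det g,\ x\bmod p^n).
\]
Since $y\in p^n\mathbb Z_p$, we have $\det g\equiv xw \pmod p$, so $x$ is a unit and the second component lies in $(\mathbb Z/p^n\mathbb Z)^\times$. The $(1,1)$-entry of a product is $xx'+yz'\equiv xx'\pmod{p^n}$, so $\chi$ is a continuous homomorphism. It is surjective, because $\chi(\operatorname{diag}(a,b))=(ab,\,a\bmod p^n)$. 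Since the target is abelian, $\overline{[K_n,K_n]}\subset\ker\chi$.

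\emph{Step 2: generators of the kernel.} The kernel $\ker\chi$ consists of the elements with $\det g=1$ and $x\equiv 1\pmod{p^n}$. Write
\[
u(b)=\begin{pmatrix}1&b\\0&1\end{pmatrix},\qquad l(c)=\begin{pmatrix}1&0\\ c&1\end{pmatrix}.
\]
Since $x$ is a unit, every such $g$ has the factorization
\[
g=l(z/x)\,\operatorname{diag}(x,x^{-1})\,u(y/x),
\]
with $y/x\in p^n\mathbb Z_p$. It therefore suffices to show that the commutator subgroup contains:
\begin{enumerate}
\item all $l(c)$ with $c\in\mathbb Z_p$;
\item all $u(b)$ with $b\in p^n\mathbb Z_p$;
\item all $\operatorname{diag}(t,t^{-1})$ with $t\equiv1\pmod{p^n}$.
\end{enumerate}

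\emph{Step 3: the unipotent elements.} We have
\[
[\operatorname{diag}(a,1),\,l(c)]=l\bigl((a^{-1}-1)c\bigr)\quad\text{(up to a sign convention).}
\]
Taking $a=-1$, the factor $a^{-1}-1=-2$ is a unit because $p>2$. Hence every $l(c)$ is a commutator. The same argument with $u(b)$, $b\in p^n\mathbb Z_p$, handles the upper unipotents. This is exactly where the hypothesis $p>2$ is used.

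\emph{Step 4: the diagonal elements.} This is the step I expect to need the most care. Let $t\equiv 1\pmod{p^n}$ with $t\neq1$, and set $s=(t-1)/p^n\in\mathbb Z_p$. Then
\[
u(p^ns)\,l(1)=\begin{pmatrix}t&p^ns\\1&1\end{pmatrix}.
\]
Its LU factorization is
\[
\begin{pmatrix}t&p^ns\\1&1\end{pmatrix}=l(t^{-1})\,\operatorname{diag}(t,t^{-1})\,u(p^ns/t),
\]
which solves for $\operatorname{diag}(t,t^{-1})$ as a product of the unipotent elements from Step 3. Consequently $\ker\chi\subset\overline{[K_n,K_n]}$. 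Combined with Step 1, we get $\ker\chi=\overline{[K_n,K_n]}$, and therefore $K_n^{\mathrm{ab}}\cong\mathbb Z_p^\times\times(\mathbb Z/p^n\mathbb Z)^\times$.

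One technical point remains: each unipotent or diagonal element must be an honest commutator or a finite product of commutators, not merely a limit. In Steps 3 and 4 this holds by the explicit identities. The closure therefore matters only for the abstract definition of the abelianization of a profinite group, where the kernel of $\chi$ is closed in any case.
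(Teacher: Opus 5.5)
Your proof is correct and follows the paper's argument essentially step for step. You use the same character $g\mapsto(\det g,\ x \bmod p^n)$, the same LU factorization of kernel elements, unipotent elements realized as commutators with a diagonal matrix, and $\operatorname{diag}(t,t^{-1})$ written as a product of four unipotents in $K_n$; the only differences are cosmetic, namely using $\operatorname{diag}(-1,1)$ where the paper uses $\operatorname{diag}(2,1)$ (each requires $p>2$) and ordering the four unipotent factors differently.
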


\begin{proof}
	The case $n=0$ follows from $\operatorname{GL}_2(\mathbb{Z}_p)^{\mathrm{ab}}\cong\mathbb{Z}_p^\times$.
	Suppose that $n\geqq 1$.
	For
	\[
		g=
		\begin{pmatrix}
			x & y\\
			z & w
		\end{pmatrix}\in K_n,
	\]
	define
	\begin{align*}
		f:K_n&\rightarrow
			\mathbb{Z}_p^\times\times
			\mathbb{Z}_p^\times/(1+p^n\mathbb{Z}_p)\\
		g&\mapsto (\det g,\overline{x}),
	\end{align*}
	where $\overline{x}=x(1+p^n\mathbb{Z}_p)$.
	Since $\det g=xw-yz\in\mathbb{Z}_p^\times$ and $y\in p^n\mathbb{Z}_p$, we have $xw\in\mathbb{Z}_p^\times$.
	Hence $x,w\in\mathbb{Z}_p^\times$.
	If
	\[
		g'=
		\begin{pmatrix}
			x' & y'\\
			z' & w'
		\end{pmatrix}\in K_n,
	\]
	then the $(1,1)$-entry of $gg'$ is $xx'+yz'$,
	and
	\[
		xx'+yz'\equiv xx'\pmod{p^n\mathbb{Z}_p}.
	\]
	Thus $f$ is a continuous homomorphism.
	Given $u\in\mathbb{Z}_p^\times$ and $\overline{x}\in\mathbb{Z}_p^\times/(1+p^n\mathbb{Z}_p)$,
	choose a representative $x\in\mathbb{Z}_p^\times$.
	The matrix
	\[
		\begin{pmatrix}
			x & 0\\
			0 & u/x
		\end{pmatrix}
	\]
	belongs to $K_n$ and maps to $(u,\overline{x})$.
	Therefore $f$ is surjective.
	It remains to show that $\ker f=[K_n,K_n]$.
	We have
	\[
		\ker f=
			\left\{
				g=
				\begin{pmatrix}
					x & y\\
					z & w
				\end{pmatrix}\in K_n
			\;\middle|\;
			\det g=1,\quad x\in1+p^n\mathbb{Z}_p
		\right\}.
	\]
	Since the target of $f$ is abelian,
	$[K_n,K_n]\subseteq\ker f$.
	We prove the reverse inclusion.
	
	Let
	\[
		g=
		\begin{pmatrix}
			x & y\\
			z & w
		\end{pmatrix}\in\ker f.
	\]
	Multiplication by upper and lower triangular matrices in $K_n$ gives
	\begin{align*}
		&\begin{pmatrix}
			1 & 0\\
			-z/x & 1
		\end{pmatrix}
			\begin{pmatrix}
				x & y\\
				z & w
			\end{pmatrix}
			\begin{pmatrix}
				1 & -y/x\\
				0 & 1
			\end{pmatrix}\\
		=
		&\begin{pmatrix}
			x & y\\
			0 & (xw-zy)/x
		\end{pmatrix}
		\begin{pmatrix}
			1 & -y/x\\
			0 & 1
		\end{pmatrix}\\
		=
			&\begin{pmatrix}
				x & 0\\
				0 & \det(g)/x
			\end{pmatrix}.
	\end{align*}
	Hence
	\[
		g=
		\begin{pmatrix}
			1 & 0\\
			z/x & 1
		\end{pmatrix}
		\begin{pmatrix}
			x & 0\\
			0 & \det(g)/x
		\end{pmatrix}
		\begin{pmatrix}
			1 & y/x\\
			0 & 1
		\end{pmatrix}.
	\]
	For $s\in p^n\mathbb{Z}_p$,
	we have
	\begin{align*}
		\left[
			\begin{pmatrix}
				2 & 0\\
				0 & 1
			\end{pmatrix},
			\begin{pmatrix}
				1 & s\\
				0 & 1
			\end{pmatrix}
		\right]
		&=
			\begin{pmatrix}
				2 & 0\\
				0 & 1
			\end{pmatrix}
			\begin{pmatrix}
				1 & s\\
				0 & 1
			\end{pmatrix}
			\begin{pmatrix}
				1/2 & 0\\
				0 & 1
			\end{pmatrix}
			\begin{pmatrix}
				1 & -s\\
				0 & 1
			\end{pmatrix}\\
		&=
			\begin{pmatrix}
				2 & 2s\\
				0 & 1
			\end{pmatrix}
			\begin{pmatrix}
				1/2 & -s/2\\
				0 & 1
			\end{pmatrix}\\
		&=
			\begin{pmatrix}
				1 & s\\
				0 & 1
			\end{pmatrix}.
	\end{align*}
	Similarly,
	for $t\in\mathbb{Z}_p$,
	\begin{align*}
		\left[
			\begin{pmatrix}
				2 & 0\\
				0 & 1
			\end{pmatrix},
			\begin{pmatrix}
				1 & 0\\
				-2t & 1
			\end{pmatrix}
		\right]
	&=
		\begin{pmatrix}
			2 & 0\\
			0 & 1
		\end{pmatrix}
		\begin{pmatrix}
			1 & 0\\
			-2t & 1
		\end{pmatrix}
		\begin{pmatrix}
			1/2 & 0\\
			0 & 1
		\end{pmatrix}
		\begin{pmatrix}
			1 & 0\\
			2t & 1
		\end{pmatrix}\\
	&=
		\begin{pmatrix}
			2 & 0\\
			-2t & 1
		\end{pmatrix}
		\begin{pmatrix}
			1/2 & 0\\
			2t & 1
		\end{pmatrix}\\
	&=
		\begin{pmatrix}
			1 & 0\\
			t & 1
		\end{pmatrix}.
	\end{align*}
	Thus every upper or lower unipotent matrix in $K_n$ belongs to $[K_n,K_n]$.
	Since $\det g=1$,
	the diagonal factor is
	\[
		\begin{pmatrix}
			x & 0\\
			0 & \det(g)/x
		\end{pmatrix}=
		\begin{pmatrix}
			x & 0\\
			0 & x^{-1}
		\end{pmatrix}.
	\]
	This matrix can be written as
	\[
	\begin{pmatrix}
		x & 0\\
		0 & x^{-1}
	\end{pmatrix}
	=
	\begin{pmatrix}
		1 & x-1\\
		0 & 1
	\end{pmatrix}
	\begin{pmatrix}
		1 & 0\\
		1 & 1
	\end{pmatrix}
	\begin{pmatrix}
		1 & x^{-1}-1\\
		0 & 1
	\end{pmatrix}
	\begin{pmatrix}
		1 & 0\\
		-x & 1
	\end{pmatrix}.
	\]
	As $x\in1+p^n\mathbb{Z}_p$,
	both $x-1$ and $x^{-1}-1$ belong to $p^n\mathbb{Z}_p$.
	All four factors therefore lie in $[K_n,K_n]$.
	It follows that $g\in[K_n,K_n]$,
	as required.
\end{proof}

For $n\geqq 1$,
we have
\[
	(\mathbb{Z}/p^n\mathbb{Z})^\times
		\cong\mathbb{Z}/(p-1)\mathbb{Z} \times\mathbb{Z}/p^{n-1}\mathbb{Z}.
\]
Thus,
for any coefficient module $R$ with trivial $K_n$-action,
the first cohomology is given by
\begin{align*}
	H^1(K_n,R)
	&=\operatorname{Hom}_{\mathrm{cts}}(K_n,R)\\
	&\cong\operatorname{Hom}_{\mathrm{cts}}
	\bigl(
		\mathbb{Z}_p^\times\times\mathbb{Z}/(p-1)\mathbb{Z}
		\times\mathbb{Z}/p^{n-1}\mathbb{Z},R
	\bigr)\\
	&\cong\operatorname{Hom}_{\mathrm{cts}}(\mathbb{Z}_p^\times,R)
		\times\operatorname{Hom}_{\mathrm{cts}}(\mathbb{Z}/(p-1)\mathbb{Z},R)
		\times\operatorname{Hom}_{\mathrm{cts}}(\mathbb{Z}/p^{n-1}\mathbb{Z},R).
\end{align*}
If $R$ is torsion-free,
every homomorphism from a finite group to $R$ is zero.
Consequently,
for every $n\geqq 0$,
\[
	\operatorname{Hom}_{\mathrm{cts}}(K_n,R)
		\cong\operatorname{Hom}_{\mathrm{cts}}(\mathbb{Z}_p^\times,R).
\]

To compute the cohomology in higher degrees,
we use the following decomposition of $K_n$.

\begin{lemma}
	For $n\geqq 0$,
	put
	\[
		L_n=\{g\in K_n\mid\det g\in\mu_{p-1}\}.
	\]
	Then there is a natural direct product decomposition
	\[
		K_n\cong L_n\times(1+p\mathbb{Z}_p).
	\]
\end{lemma}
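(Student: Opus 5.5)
The plan is to realize the decomposition through the central scalar subgroup. Since $p>2$, we have $\mathbb{Z}_p^\times=\mu_{p-1}\times(1+p\mathbb{Z}_p)$, and squaring is an automorphism of the pro-$p$ group $1+p\mathbb{Z}_p$. Embed $1+p\mathbb{Z}_p$ into $K_n$ as scalar matrices $u\mapsto u\cdot 1_2$. These lie in $K_n$ for every $n$, because their $(1,2)$-entry is $0$, and they are central in $K_n$. Define
\[
	\sigma:K_n\rightarrow 1+p\mathbb{Z}_p,\qquad g\mapsto \bigl(\langle\det g\rangle\bigr)^{1/2},
\]
where $\langle\cdot\rangle:\mathbb{Z}_p^\times\to 1+p\mathbb{Z}_p$ is the projection killing $\mu_{p-1}$, and the square root is the inverse of the squaring automorphism of $1+p\mathbb{Z}_p$. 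Then $\sigma$ is a continuous homomorphism, since it is a composite of continuous homomorphisms.

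The candidate isomorphism is
\[
	K_n\rightarrow L_n\times(1+p\mathbb{Z}_p),\qquad g\mapsto \bigl(g\,\sigma(g)^{-1},\ \sigma(g)\bigr),
\]
with inverse $(\ell,u)\mapsto \ell u$. I would carry out the verification in the following order.

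\begin{enumerate}
	\item Check that $g\sigma(g)^{-1}\in L_n$. It lies in $K_n$ because scalars do. Its determinant is $\det g\cdot\langle\det g\rangle^{-1}$, which is the $\mu_{p-1}$-component of $\det g$.
	\item Check that the map is a homomorphism. This uses that the scalars $\sigma(g)$ are central, so $gg'\sigma(gg')^{-1}=g\sigma(g)^{-1}\cdot g'\sigma(g')^{-1}$.
	\item Check that the two maps are mutually inverse. The key observation is that $\sigma$ is identically $1$ on $L_n$ and that $\sigma(u\cdot 1_2)=\langle u^2\rangle^{1/2}=u$ for $u\in 1+p\mathbb{Z}_p$. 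Hence $\sigma(\ell u)=u$.
	\item Conclude that the maps are topological. Both maps are continuous, and $L_n$ is closed, so the result is an isomorphism of profinite groups.
\end{enumerate}

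Equivalently, the statement is just that the central subgroup $Z=(1+p\mathbb{Z}_p)\cdot 1_2$ meets $L_n$ trivially and that $L_nZ=K_n$. Trivial intersection holds because $\det(u\cdot 1_2)=u^2\in\mu_{p-1}\cap(1+p\mathbb{Z}_p)=\{1\}$ forces $u=1$, using that $1+p\mathbb{Z}_p$ is torsion-free for $p>2$. The equality $L_nZ=K_n$ follows from the square-root construction above.

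The only genuinely delicate point is the existence of the continuous square root on $1+p\mathbb{Z}_p$. I would justify it by the isomorphism $1+p\mathbb{Z}_p\cong\mathbb{Z}_p$ (via $\log$ for $p>2$), under which squaring becomes multiplication by $2$, a unit in $\mathbb{Z}_p$. The word \emph{natural} in the statement refers to the fact that this splitting is canonical: it involves no choices and is compatible with the inclusions $K_{n+1}\subset K_n$.
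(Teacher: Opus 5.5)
Your proposal is correct and takes essentially the same route as the paper: the continuous homomorphism $g\mapsto\langle\det g\rangle^{1/2}$ has kernel $L_n$, it is split by the central scalar embedding $u\mapsto u\cdot 1_2$, and centrality gives the direct product. Your version additionally writes out the checks the paper leaves implicit, namely the trivial intersection, the homomorphism property and that the bijection is a homeomorphism.
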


\begin{proof}
	For $g\in K_n$,
	write $\det g=uv$ uniquely with $u\in1+p\mathbb{Z}_p$ and $v\in\mu_{p-1}$.
	Under the isomorphism $1+p\mathbb{Z}_p\cong\mathbb{Z}_p$,
	the squaring map corresponds to multiplication by $2$.
	Since $p>2$,
	this is an automorphism.
	Thus every element of $1+p\mathbb{Z}_p$ has a unique square root in $1+p\mathbb{Z}_p$.
	We obtain a continuous homomorphism
	\[
		\sqrt{\det}:K_n\longrightarrow1+p\mathbb{Z}_p,~ g\longmapsto\sqrt{u},
	\]
	and an exact sequence
	\[
	1\longrightarrow L_n\longrightarrow K_n
	\xrightarrow{\sqrt{\det}}1+p\mathbb{Z}_p
	\longrightarrow1.
	\]
	The map
	\[
	1+p\mathbb{Z}_p\longrightarrow K_n,~
	u\longmapsto\begin{pmatrix}
		u & 0\\
		0 & u
	\end{pmatrix}
	\]
	is a continuous section.
	More explicitly,
	for
	\[
	g=\begin{pmatrix}
		x & y\\
		z & w
	\end{pmatrix},~ \det g=uv,
	\]
	we have
	\[
	g=
	\begin{pmatrix}
		\sqrt{u} & 0\\
		0 & \sqrt{u}
	\end{pmatrix}
	\begin{pmatrix}
		x/\sqrt{u} & y/\sqrt{u}\\
		z/\sqrt{u} & w/\sqrt{u}
	\end{pmatrix},
	\]
	where the second factor belongs to $L_n$.
	Since the image of the section is central in $K_n$,
	the decomposition is a direct product.
\end{proof}

In particular,
\[
K_n\cong L_n\times(1+p\mathbb{Z}_p)
\cong L_n\times\mathbb{Z}_p.
\]

\subsection{The structure of $L_n$ as a $p$-adic Lie group}

We continue to assume that $p>2$.
For the cohomology computations below, we introduce the following notation.

\begin{definition}
	For $n\geqq 0$,
	put
	\[
	F_n:=\begin{cases}
		0 & (n=0,1),\\
		\mathbb{Z}/p^{n-1}\mathbb{Z} & (n\geqq 2).
	\end{cases}
	\]
	For $m\geqq 1$,
	put
	\[
	F_{n,m}:=\begin{cases}
		0 & (n=0,1),\\
		\mathbb{Z}/p^{\min\{m,n-1\}}\mathbb{Z} & (n\geqq 2).
	\end{cases}
	\]
\end{definition}

With this notation,
\[
F_n^\vee\cong F_n,~
F_{n,m}^\vee\cong F_{n,m},~
\operatorname{Hom}(F_n,\mathbb{Z}/p^m\mathbb{Z})\cong F_{n,m}.
\]

The group $L_n$ is closed in $K_n$,
since it is the kernel of a continuous homomorphism.
Its dimension as a $p$-adic Lie group need not be the same as that of $K_n$.
More precisely,
we have the following.

\begin{lemma}
	For every $n\geqq 0$, the group $L_n$ is a $p$-adic Lie group of dimension $3$.
\end{lemma}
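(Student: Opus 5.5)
The plan is to exhibit $L_n$ as an open subgroup of a closed subgroup of $\operatorname{GL}_2(\mathbb{Z}_p)$ whose Lie algebra we can identify explicitly. By construction, $L_n$ contains the subgroup
\[
	K_n^1:=\{g\in K_n\mid \det g=1\}=K_n\cap\operatorname{SL}_2(\mathbb{Z}_p),
\]
and $K_n^1$ is exactly the kernel of the continuous homomorphism $\det:L_n\to\mu_{p-1}$. Since $\mu_{p-1}$ is finite, $K_n^1$ has finite index in $L_n$. It is also closed, hence open in $L_n$. Because the dimension of a $p$-adic Lie group equals the dimension of any open subgroup, it suffices to show that $K_n^1$ is a $3$-dimensional $p$-adic Lie group.

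For this I would first observe that $K_n$ is open in $K=\operatorname{GL}_2(\mathbb{Z}_p)$. Indeed, it contains the principal congruence subgroup $\ker(\operatorname{GL}_2(\mathbb{Z}_p)\to\operatorname{GL}_2(\mathbb{Z}/p^n\mathbb{Z}))$, since every element of that kernel has $(1,2)$-entry in $p^n\mathbb{Z}_p$. Consequently $K_n^1$ is an open subgroup of $\operatorname{SL}_2(\mathbb{Z}_p)$. Now $\operatorname{SL}_2(\mathbb{Z}_p)$ is a compact $p$-adic Lie group of dimension $3$, with Lie algebra $\mathfrak{sl}_2(\mathbb{Q}_p)$. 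This follows because it is a closed subgroup of $\operatorname{GL}_2(\mathbb{Z}_p)$ (Cartan's closed subgroup theorem) and its Lie algebra is the trace-zero matrices. Alternatively, one can argue directly: the congruence subgroup $\ker(\operatorname{SL}_2(\mathbb{Z}_p)\to\operatorname{SL}_2(\mathbb{F}_p))$ is uniform pro-$p$ for $p>2$ with $3$ topological generators, for example
\[
\begin{pmatrix}1&p\\0&1\end{pmatrix},\quad\begin{pmatrix}1&0\\p&1\end{pmatrix},\quad\begin{pmatrix}1+p&0\\0&(1+p)^{-1}\end{pmatrix}.
\]
Hence every open subgroup has dimension $3$, and so do $K_n^1$ and $L_n$.

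As a consistency check, I would compare with the preceding lemma. We have $K_n\cong L_n\times(1+p\mathbb{Z}_p)$ with $K_n$ open in $\operatorname{GL}_2(\mathbb{Z}_p)$, so $\dim K_n=4=\dim L_n+1$, which agrees with the claim. In fact this gives a second short proof: dimension is additive in direct products (compare Lie algebras), so $\dim L_n=\dim K_n-\dim\mathbb{Z}_p=4-1=3$.

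The main point requiring care is the assertion that $L_n$ actually is a $p$-adic Lie group. This follows from it being a closed subgroup of the $p$-adic Lie group $K_n$, or from its containing the open uniform subgroup exhibited above. Everything else is bookkeeping about open subgroups.
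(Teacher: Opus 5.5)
Your proof is correct and takes essentially the same route as the paper: both pass to $K_n\cap\operatorname{SL}_2(\mathbb{Z}_p)=\ker(\det|_{L_n})$, which is open of finite index in $L_n$ and open in $\operatorname{SL}_2$, so $\dim L_n=3$. Your extra details are sound supplements, not a different approach: the openness of $K_n$ via the principal congruence subgroup, the uniform-subgroup check, and the alternative count $\dim L_n=\dim K_n-1$ from $K_n\cong L_n\times(1+p\mathbb{Z}_p)$.
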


\begin{proof}
	It suffices to show that $L_n$ contains an open subgroup of $\operatorname{SL}_2(\mathbb{Q}_p)$ as an open subgroup.
	The subgroup $K_n\cap\operatorname{SL}_2(\mathbb{Z}_p)$ is the kernel of
	\[
	\det:L_n\longrightarrow\mu_{p-1},
	\]
	and is therefore open in $L_n$.
	Since $K_n$ is open in $\operatorname{GL}_2(\mathbb{Q}_p)$,
	this subgroup is also open in $\operatorname{SL}_2(\mathbb{Q}_p)$.
	Hence $L_n$ and $\operatorname{SL}_2(\mathbb{Q}_p)$ have the same dimension,
	namely $3$.
\end{proof}

We next determine $L_n^{\mathrm{ab}}$,
which will be used to compute the first cohomology of $L_n$.
By the direct product decomposition above, any $g,g'\in K_n$ can be written as
\[
g=g_lg_u,~
g'=g_l'g_u',
\]
with $g_l,g_l'\in L_n$ and $g_u,g_u'\in1+p\mathbb{Z}_p$,
where the latter group is embedded as scalar matrices.
The factors $g_u$ and $g_u'$ are central,
so
\[
[g,g']=[g_lg_u,g_l'g_u']=[g_l,g_l']\in[L_n,L_n].
\]
It follows that
\[
[L_n,L_n]=[K_n,K_n].
\]
Thus $L_n^{\mathrm{ab}}$ is a subgroup of
\[
K_n/[K_n,K_n]
\cong\mathbb{Z}_p^\times\times(\mathbb{Z}/p^n\mathbb{Z})^\times.
\]
Since $L_n$ consists of the elements whose determinant belongs to $\mu_{p-1}$,
we obtain
\[
L_0^{\mathrm{ab}}\cong\mu_{p-1},~
L_n^{\mathrm{ab}}
\cong\mu_{p-1}\times(\mathbb{Z}/p^n\mathbb{Z})^\times
\quad(n\geqq 1).
\]
We summarize this as follows.

\begin{lemma}
	For every $n\geqq 0$,
	\[
	L_n^{\mathrm{ab}}\cong\begin{cases}
		\mathbb{Z}/(p-1)\mathbb{Z} & (n=0),\\
		\mathbb{Z}/(p-1)\mathbb{Z}
		\times(\mathbb{Z}/p^n\mathbb{Z})^\times & (n\geqq 1).
	\end{cases}
	\]
	For $n\geqq 1$,
	the second group is isomorphic to
	\[
	\bigl(\mathbb{Z}/(p-1)\mathbb{Z}\bigr)^2
	\times\mathbb{Z}/p^{n-1}\mathbb{Z}.
	\]
	In particular,
	the $p$-primary component of $L_n^{\mathrm{ab}}$ is isomorphic to $F_n$.
\end{lemma}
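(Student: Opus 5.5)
The plan is to read off $L_n^{\mathrm{ab}}$ from the computation of $K_n^{\mathrm{ab}}$ in the preceding lemma, using the identity $[L_n,L_n]=[K_n,K_n]$ established just above. Because the commutator subgroups coincide, the inclusion $L_n\hookrightarrow K_n$ induces an injection
\[
L_n^{\mathrm{ab}}=L_n/[K_n,K_n]\hookrightarrow K_n/[K_n,K_n].
\]
I will work with the explicit isomorphism $f:K_n^{\mathrm{ab}}\xrightarrow{\sim}\mathbb Z_p^\times\times\mathbb Z_p^\times/(1+p^n\mathbb Z_p)$, $g\mapsto(\det g,\overline{x})$, from the proof of the abelianization lemma. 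For $n=0$ the second factor is omitted and $f=\det$. Under $f$, the group $L_n^{\mathrm{ab}}$ is identified with the image $f(L_n)$, so the whole task reduces to computing this image.

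The inclusion $f(L_n)\subseteq\mu_{p-1}\times\mathbb Z_p^\times/(1+p^n\mathbb Z_p)$ holds by the definition of $L_n$. For the reverse inclusion, take $v\in\mu_{p-1}$ and $x\in\mathbb Z_p^\times$. The matrix $\operatorname{diag}(x,v/x)$ lies in $K_n$, since its $(1,2)$-entry is $0$. It has determinant $v$, so it lies in $L_n$, and it maps to $(v,\overline{x})$. Hence
\[
L_n^{\mathrm{ab}}\cong\mu_{p-1}\times(\mathbb Z/p^n\mathbb Z)^\times .
\]
For $n=0$ this reads $L_0^{\mathrm{ab}}\cong\mu_{p-1}$. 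In both cases $\mu_{p-1}\cong\mathbb Z/(p-1)\mathbb Z$, because $\mu_{p-1}\subset\mathbb Z_p^\times$ is cyclic.

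For the remaining assertions I use that $p$ is odd, so that $(\mathbb Z/p^n\mathbb Z)^\times$ is cyclic of order $(p-1)p^{n-1}$ when $n\geqq1$. Hence $(\mathbb Z/p^n\mathbb Z)^\times\cong\mathbb Z/(p-1)\mathbb Z\times\mathbb Z/p^{n-1}\mathbb Z$, and therefore $L_n^{\mathrm{ab}}\cong(\mathbb Z/(p-1)\mathbb Z)^2\times\mathbb Z/p^{n-1}\mathbb Z$. Since $p\nmid p-1$, the $p$-primary component is $\mathbb Z/p^{n-1}\mathbb Z$ for $n\geqq1$ and $0$ for $n=0$. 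This matches $F_n$ in all cases: for $n=1$ we get $\mathbb Z/p^0\mathbb Z=0=F_1$, and $F_0=0$.

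The only step needing care is the identification of $L_n^{\mathrm{ab}}$ with a subgroup of $K_n^{\mathrm{ab}}$, and it is already settled by $[L_n,L_n]=[K_n,K_n]$. That identity rests on the central scalar factor in the decomposition $K_n\cong L_n\times(1+p\mathbb Z_p)$. Everything after that step is explicit.
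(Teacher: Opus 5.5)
Your proposal is correct and follows essentially the paper's argument. The paper also derives $[L_n,L_n]=[K_n,K_n]$ from the central scalar factor, embeds $L_n^{\mathrm{ab}}$ into $K_n^{\mathrm{ab}}\cong\mathbb{Z}_p^\times\times(\mathbb{Z}/p^n\mathbb{Z})^\times$ and cuts out the condition $\det\in\mu_{p-1}$; your surjectivity check via $\operatorname{diag}(x,v/x)$ and the cyclicity of $(\mathbb{Z}/p^n\mathbb{Z})^\times$ for odd $p$ just make explicit steps the paper leaves implicit.
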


Let $A$ be a discrete module with trivial action.
The decomposition $K_n\cong L_n\times\mathbb{Z}_p$ gives a spectral sequence
\[
H^i\bigl(\mathbb{Z}_p,H^j(L_n,A)\bigr)
\Longrightarrow H^{i+j}(K_n,A).
\]
In low degrees,
this gives an exact sequence
\[
0\longrightarrow H^1(\mathbb{Z}_p,A)
\longrightarrow H^1(K_n,A)
\longrightarrow H^1(L_n,A)^{\mathbb{Z}_p}
\longrightarrow H^2(\mathbb{Z}_p,A).
\]
Since $\mathbb{Z}_p$ has cohomological dimension $1$,
its cohomology with discrete torsion coefficients vanishes in degrees greater than $1$.

We now consider the dualizing modules of $L_n$.
A compact $p$-adic Lie group with no $p$-torsion is a Poincar\'e group.
For $\operatorname{GL}_2(\mathbb{Z}_p)$,
we use the following fact.

\begin{lemma}[{\cite[p.~276]{venjakob2002structure}}]
	If $p>3$, then $\operatorname{GL}_2(\mathbb{Z}_p)$ has no $p$-torsion.
\end{lemma}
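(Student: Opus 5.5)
The plan is to show that $\operatorname{GL}_2(\mathbb{Z}_p)$ has no element of order $p$ when $p>3$. We argue by the eigenvalues of such an element in an algebraic closure of $\mathbb{Q}_p$.

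Suppose $g\in\operatorname{GL}_2(\mathbb{Z}_p)$ satisfies $g^p=1$ and $g\neq 1$. Then the minimal polynomial of $g$ over $\mathbb{Q}_p$ divides $X^p-1=(X-1)\Phi_p(X)$. Since $g\neq 1$, this minimal polynomial is not $X-1$, so it must share a root with $\Phi_p$. That root is a primitive $p$-th root of unity $\zeta$, and it is therefore an eigenvalue of $g$. The characteristic polynomial of $g$ has degree $2$ and coefficients in $\mathbb{Q}_p$, so $[\mathbb{Q}_p(\zeta):\mathbb{Q}_p]\le 2$.

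On the other hand, $\Phi_p$ is Eisenstein at $p$ after the substitution $X\mapsto X+1$. Hence $\Phi_p$ is irreducible over $\mathbb{Q}_p$, and $[\mathbb{Q}_p(\zeta):\mathbb{Q}_p]=p-1$. Combining the two bounds gives $p-1\le 2$, that is, $p\le 3$. This contradicts $p>3$, so no such $g$ exists.

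It remains to pass from elements of order exactly $p$ to arbitrary $p$-torsion. If $g$ is a nontrivial element of finite order $p^k$, then $g^{p^{k-1}}$ has order exactly $p$. So the absence of elements of order $p$ implies the absence of all nontrivial $p$-torsion, and the lemma follows.

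The only substantive input is the irreducibility of $\Phi_p$ over $\mathbb{Q}_p$, which follows from Eisenstein's criterion. The bound is sharp: for $p=3$ the matrix $\begin{pmatrix}0&-1\\1&-1\end{pmatrix}$ has order $3$ and lies in $\operatorname{GL}_2(\mathbb{Z}_3)$, which explains why the hypothesis $p>3$ is needed.
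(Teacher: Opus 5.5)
Your proof is correct. The paper gives no argument of its own for this lemma; it only cites \cite[p.~276]{venjakob2002structure}. So the comparison is between a citation and your self-contained proof. Your proof rests on one input, the irreducibility of $\Phi_p$ over $\mathbb{Q}_p$, which gives $[\mathbb{Q}_p(\zeta):\mathbb{Q}_p]=p-1$. This is set against the bound $2$ coming from the characteristic polynomial.

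Your version has three concrete advantages over the bare citation. First, it never uses integrality, so it proves the stronger fact that $\operatorname{GL}_2(\mathbb{Q}_p)$ has no element of order $p$ when $p>3$. In particular every compact subgroup, including the groups $K_n$ and $L_n$ to which the paper applies the lemma, is $p$-torsion-free. Second, the same argument shows that $\operatorname{GL}_d(\mathbb{Q}_p)$ has no element of order $p$ whenever $p-1>d$, which makes the range of validity transparent. Third, your example $\begin{pmatrix}0&-1\\1&-1\end{pmatrix}\in\operatorname{GL}_2(\mathbb{Z}_3)$, with characteristic polynomial $X^2+X+1$, correctly shows that the hypothesis $p>3$ is sharp. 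For $p=2$ the scalar matrix $-1$ plays the same role. What the citation buys is only brevity.
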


Since $L_n\subseteq\operatorname{GL}_2(\mathbb{Z}_p)$,
the group $L_n$ also has no $p$-torsion when $p>3$.
From now on,
we assume that $p>3$.

Let $\mathfrak{g}$ be an $L_n$-stable $\mathbb{Z}_p$-lattice in $\operatorname{Lie}(L_n)$,
equipped with the adjoint action.
Write $I_n$ for the discrete dualizing module of $L_n$ and $D_n$ for its compact dualizing module,
and put
\[
\mathfrak{g}^*
=\operatorname{Hom}_{\mathbb{Z}_p}(\mathfrak{g},\mathbb{Z}_p).
\]
By \cite[Proposition~4.40]{beaudry2022dualizing},
there is an isomorphism
\[
D_n\cong\bigwedge_{\mathbb{Z}_p}^3\mathfrak{g}^*
\]
of right $L_n$-modules.
The action is described in terms of the determinant of the adjoint action in \cite[Remark~4.23]{beaudry2022dualizing}.
The argument in the proof of \cite[Corollary~5.2]{kohlhaase2017smooth} shows that this action is trivial.
Moreover,
\[
I_n\cong\operatorname{Hom}_{\mathrm{cts}}
(D_n,\mathbb{Q}_p/\mathbb{Z}_p).
\]
Since $D_n$ has trivial $L_n$-action,
so does $I_n$.
Hence
\[
D_n\cong\mathbb{Z}_p,~
I_n\cong\mathbb{Q}_p/\mathbb{Z}_p,
\]
with trivial action in both cases.
For $\mathbb{Z}_p^p$ with trivial $L_n$-action and its profinite topology,
we also have
\[
\operatorname{Hom}_{\mathrm{cts}}(\mathbb{Z}_p^p,I_n)
\cong I_n\cong\mathbb{Q}_p/\mathbb{Z}_p.
\]
Thus \cite[Theorem~4.26]{beaudry2022dualizing} gives a perfect pairing
\[
H^i(L_n,\mathbb{Z}_p^p)
\otimes H^{3-i}(L_n,\mathbb{Q}_p/\mathbb{Z}_p)
\longrightarrow\mathbb{Q}_p/\mathbb{Z}_p.
\]
In particular,
\[
\operatorname{Hom}_{\mathrm{cts}}
\bigl(H^i(L_n,\mathbb{Z}_p^p),\mathbb{Q}_p/\mathbb{Z}_p\bigr)
\cong H^{3-i}(L_n,\mathbb{Q}_p/\mathbb{Z}_p).
\]

\subsection{The cohomology of $L_n$ for $p>3$ and $R=\mathbb{Z}_p$}

For the rest of this section,
let $p>3$ and $R=\mathbb{Z}_p$.
We first compute $H^*(L_n,\mathbb{Z}_p^p)$.
Since $L_n$ is a $p$-adic Lie group of dimension $3$ with no $p$-torsion,
it is a Poincar\'e group of dimension $3$.
Its dualizing modules have trivial action,
so its cohomology with coefficients in $\mathbb{Z}_p^p$ is $\mathbb{Z}_p$ in degrees $0$ and $3$,
and vanishes in degrees at least $4$.
Poincar\'e duality gives
\begin{align*}
	H^1(L_n,\mathbb{Z}_p^p)
	&\cong H^2(L_n,\mathbb{Q}_p/\mathbb{Z}_p)^\vee,\\
	H^2(L_n,\mathbb{Z}_p^p)
	&\cong H^1(L_n,\mathbb{Q}_p/\mathbb{Z}_p)^\vee.
\end{align*}

We begin with the first cohomology for the four coefficient modules
\[
\mathbb{Z}_p^p,~
\mathbb{Z}/p^m\mathbb{Z},~
\mathbb{Q}_p/\mathbb{Z}_p,~
\mathbb{Z}_p^d.
\]
These groups are given by continuous homomorphisms from $L_n^{\mathrm{ab}}$ to the corresponding coefficient modules.
For every $n\geqq 0$, the $p$-primary component of $L_n^{\mathrm{ab}}$ is isomorphic to $F_n$.
Thus
\[
\operatorname{Hom}_{\mathrm{cts}}(L_n^{\mathrm{ab}},\mathbb{Z}_p^p)
\cong\operatorname{Hom}_{\mathrm{cts}}(F_n,\mathbb{Z}_p^p)=0,
\]
since $F_n$ is finite and $\mathbb{Z}_p$ is torsion-free.
The same argument applies to discrete coefficients:
\[
H^1(L_n,\mathbb{Z}_p^d)
\cong\operatorname{Hom}_{\mathrm{cts}}(L_n^{\mathrm{ab}},\mathbb{Z}_p^d)
=0.
\]
For finite coefficients,
every homomorphism factors through the
$p$-primary component,
and hence
\begin{align*}
	H^1(L_n,\mathbb{Z}/p^m\mathbb{Z})
	&\cong\operatorname{Hom}_{\mathrm{cts}}
	(F_n,\mathbb{Z}/p^m\mathbb{Z})\\
	&\cong F_{n,m}.
\end{align*}
Similarly,
\begin{align*}
	\operatorname{Hom}_{\mathrm{cts}}(L_n,\mathbb{Q}_p/\mathbb{Z}_p)
	&\cong\operatorname{Hom}_{\mathrm{cts}}(F_n,\mathbb{Q}_p/\mathbb{Z}_p)\\
	&=F_n^\vee\cong F_n,
\end{align*}
so
\[
H^1(L_n,\mathbb{Q}_p/\mathbb{Z}_p)\cong F_n.
\]

We now apply Poincar\'e duality.
Recall that the dualizing module $I_n$ is isomorphic to $\mathbb{Q}_p/\mathbb{Z}_p$ with trivial $L_n$-action.
For every $m\geqq 1$,
\[
(\mathbb{Z}/p^m\mathbb{Z})^\vee
:=\operatorname{Hom}(\mathbb{Z}/p^m\mathbb{Z},I_n)
\cong\mathbb{Z}/p^m\mathbb{Z}.
\]
Therefore
\begin{align*}
	H^2(L_n,\mathbb{Z}/p^m\mathbb{Z})
	&\cong H^1\bigl(L_n,(\mathbb{Z}/p^m\mathbb{Z})^\vee\bigr)^\vee\\
	&\cong H^1(L_n,\mathbb{Z}/p^m\mathbb{Z})^\vee\\
	&\cong F_{n,m}^\vee\\
	&\cong F_{n,m}.
\end{align*}
For profinite coefficients,
duality gives
\[
H^2(L_n,\mathbb{Z}_p^p)
\cong H^1(L_n,\mathbb{Q}_p/\mathbb{Z}_p)^\vee
\cong F_n.
\]
For coefficients in $\mathbb{Q}_p/\mathbb{Z}_p$,
we obtain
\[
H^2(L_n,\mathbb{Q}_p/\mathbb{Z}_p)
\cong H^1(L_n,\mathbb{Z}_p^p)^\vee=0.
\]
Finally,
the torsion subgroup of $H^2(L_n,\mathbb{Z}_p^p)$ is $F_n$,
which is zero for $n=0,1$ and isomorphic to $\mathbb{Z}/p^{n-1}\mathbb{Z}$ for $n\geqq 2$.
Since $H^1(L_n,\mathbb{Z}_p^p)=0$,
we obtain
\[
H^2(L_n,\mathbb{Z}_p^d)\cong F_n.
\]

Collecting the results for finite coefficients,
we have
\[
H^i(L_n,\mathbb{Z}/p^m\mathbb{Z})\cong
\begin{cases}
	\mathbb{Z}/p^m\mathbb{Z} & (i=0),\\
	F_{n,m} & (i=1,2),\\
	\mathbb{Z}/p^m\mathbb{Z} & (i=3),\\
	0 & (i\geqq 4).
\end{cases}
\]
For profinite coefficients,
\[
H^i(L_n,\mathbb{Z}_p^p)\cong
\begin{cases}
	\mathbb{Z}_p & (i=0),\\
	0 & (i=1),\\
	F_n & (i=2),\\
	\mathbb{Z}_p & (i=3),\\
	0 & (i\geqq 4).
\end{cases}
\]
In degree $3$,
duality also gives
\begin{align*}
	H^3(L_n,\mathbb{Q}_p/\mathbb{Z}_p)
	&\cong H^0(L_n,\mathbb{Z}_p^p)^\vee\\
	&\cong\mathbb{Z}_p^\vee\\
	&\cong\mathbb{Q}_p/\mathbb{Z}_p.
\end{align*}
Thus,
for every $n\geqq 0$,
\[
H^i(L_n,\mathbb{Q}_p/\mathbb{Z}_p)\cong
\begin{cases}
	\mathbb{Q}_p/\mathbb{Z}_p & (i=0),\\
	F_n & (i=1),\\
	0 & (i=2),\\
	\mathbb{Q}_p/\mathbb{Z}_p & (i=3),\\
	0 & (i\geqq 4).
\end{cases}
\]
For discrete $\mathbb{Z}_p$-coefficients,
we similarly obtain
\[
H^i(L_n,\mathbb{Z}_p^d)\cong
\begin{cases}
	\mathbb{Z}_p & (i=0),\\
	0 & (i=1),\\
	F_n & (i=2),\\
	0 & (i=3),\\
	\mathbb{Q}_p/\mathbb{Z}_p & (i=4),\\
	0 & (i\geqq 5).
\end{cases}
\]

\subsection{The cohomology of $K_n$}

Let $p>3$, $n\geqq 0$, and $m\geqq 1$.
For trivial coefficients,
we have
\[
H^1(K_n,-)=\operatorname{Hom}_{\mathrm{cts}}(K_n,-).
\]
The description of $K_n^{\mathrm{ab}}$ reduces the computation with $\mathbb{Z}_p^d$ and $\mathbb{Z}_p^p$ coefficients to
\[
\operatorname{Hom}_{\mathrm{cts}}(\mathbb{Z}_p^\times,\mathbb{Z}_p^d)=0,~
\operatorname{Hom}_{\mathrm{cts}}(\mathbb{Z}_p^\times,\mathbb{Z}_p^p)
\cong\mathbb{Z}_p^p.
\]
The pro-$p$ factor of $K_n^{\mathrm{ab}}$ is isomorphic to $\mathbb{Z}_p\times F_n$.
Hence
\[
\operatorname{Hom}_{\mathrm{cts}}(K_n,\mathbb{Z}/p^m\mathbb{Z})
\cong\mathbb{Z}/p^m\mathbb{Z}\times F_{n,m}.
\]
Using $\mathbb{Z}_p^\times\cong\mu_{p-1}\times(1+p\mathbb{Z}_p)$,
we also obtain
\begin{align*}
	H^1(K_n,\mathbb{Q}_p/\mathbb{Z}_p)
	&\cong\operatorname{Hom}_{\mathrm{cts}}\bigl(
	\mathbb{Z}_p^\times\times(\mathbb{Z}/p^n\mathbb{Z})^\times,
	\mathbb{Q}_p/\mathbb{Z}_p\bigr)\\
	&\cong\operatorname{Hom}_{\mathrm{cts}}
	(\mathbb{Z}_p,\mathbb{Q}_p/\mathbb{Z}_p)
	\times\operatorname{Hom}_{\mathrm{cts}}
	(F_n,\mathbb{Q}_p/\mathbb{Z}_p)\\
	&\cong\mathbb{Q}_p/\mathbb{Z}_p\times F_n.
\end{align*}

Since $p>3$,
the group $\operatorname{GL}_2(\mathbb{Z}_p)$ is a $p$-adic Lie group of dimension $4$ with no $p$-torsion.
Thus $K_n$ is a Poincar\'e group of dimension $4$.
As in the case of $L_n$,
its dualizing modules have trivial action.
Duality therefore gives
\begin{align*}
	H^4(K_n,\mathbb{Z}_p^p)
	&\cong H^0(K_n,\mathbb{Q}_p/\mathbb{Z}_p)^\vee
	\cong(\mathbb{Q}_p/\mathbb{Z}_p)^\vee
	\cong\mathbb{Z}_p,\\
	H^4(K_n,\mathbb{Q}_p/\mathbb{Z}_p)
	&\cong H^0(K_n,\mathbb{Z}_p^p)^\vee
	\cong(\mathbb{Z}_p^p)^\vee
	\cong\mathbb{Q}_p/\mathbb{Z}_p,\\
	H^4(K_n,\mathbb{Z}/p^m\mathbb{Z})
	&\cong H^0(K_n,\mathbb{Z}/p^m\mathbb{Z})^\vee
	\cong(\mathbb{Z}/p^m\mathbb{Z})^\vee
	\cong\mathbb{Z}/p^m\mathbb{Z}.
\end{align*}
In degree $3$,
we obtain
\begin{align*}
	H^3(K_n,\mathbb{Z}_p^p)
	&\cong H^1(K_n,\mathbb{Q}_p/\mathbb{Z}_p)^\vee\\
	&\cong(\mathbb{Q}_p/\mathbb{Z}_p)^\vee\times F_n^\vee\\
	&\cong\mathbb{Z}_p\times F_n,\\
	H^3(K_n,\mathbb{Q}_p/\mathbb{Z}_p)
	&\cong H^1(K_n,\mathbb{Z}_p^p)^\vee\\
	&\cong(\mathbb{Z}_p^p)^\vee\\
	&\cong\mathbb{Q}_p/\mathbb{Z}_p,\\
	H^3(K_n,\mathbb{Z}/p^m\mathbb{Z})
	&\cong H^1(K_n,\mathbb{Z}/p^m\mathbb{Z})^\vee\\
	&\cong\mathbb{Z}/p^m\mathbb{Z}\times F_{n,m}.
\end{align*}

It remains to compute the cohomology with coefficients in $\mathbb{Z}_p^d$ and the second cohomology with coefficients in $\mathbb{Z}_p^p$,
$\mathbb{Q}_p/\mathbb{Z}_p$,
and $\mathbb{Z}/p^m\mathbb{Z}$.
Since $K_n\cong L_n\times\mathbb{Z}_p$,
\cite[Theorem~2.4.6]{cohomology-of-number-fields} gives,
for a discrete coefficient module $A^d$ with trivial action,
\[
H^i(K_n,A^d)
\cong\bigoplus_{r+s=i}
H^r\bigl(L_n,H^s(\mathbb{Z}_p,A^d)\bigr).
\]
Recall that
\begin{align*}
	H^0(\mathbb{Z}_p,\mathbb{Z}_p^d)&\cong\mathbb{Z}_p,\\
	H^1(\mathbb{Z}_p,\mathbb{Z}_p^d)&=0,\\
	H^2(\mathbb{Z}_p,\mathbb{Z}_p^d)&\cong\mathbb{Q}_p/\mathbb{Z}_p,
\end{align*}
and
\[
H^0(\mathbb{Z}_p,\mathbb{Z}/p^m\mathbb{Z})
\cong H^1(\mathbb{Z}_p,\mathbb{Z}/p^m\mathbb{Z})
\cong\mathbb{Z}/p^m\mathbb{Z}.
\]

The cohomology of $K_n$ in degrees $0$ and $1$ has already been computed.
Since $H^i(\mathbb{Z}_p,\mathbb{Z}_p^d)=0$ for $i\geqq 3$,
we have
\begin{align*}
	H^2(K_n,\mathbb{Z}_p^d)
		&\cong H^0\bigl(L_n,H^2(\mathbb{Z}_p,\mathbb{Z}_p^d)\bigr)
				\oplus H^1\bigl(L_n,H^1(\mathbb{Z}_p,\mathbb{Z}_p^d)\bigr)
				\oplus H^2\bigl(L_n,H^0(\mathbb{Z}_p,\mathbb{Z}_p^d)\bigr)\\
		&\cong\mathbb{Q}_p/\mathbb{Z}_p\oplus0\oplus F_n\\
		&\cong\mathbb{Q}_p/\mathbb{Z}_p\oplus F_n,\\[4pt]
	H^3(K_n,\mathbb{Z}_p^d)
		&\cong H^1\bigl(L_n,H^2(\mathbb{Z}_p,\mathbb{Z}_p^d)\bigr)
			\oplus H^2\bigl(L_n,H^1(\mathbb{Z}_p,\mathbb{Z}_p^d)\bigr)
			\oplus H^3\bigl(L_n,H^0(\mathbb{Z}_p,\mathbb{Z}_p^d)\bigr)\\
		&\cong F_n\oplus0\oplus0\\
		&\cong F_n,\\[4pt]
	H^4(K_n,\mathbb{Z}_p^d)
		&\cong H^2\bigl(L_n,H^2(\mathbb{Z}_p,\mathbb{Z}_p^d)\bigr)
			\oplus H^3\bigl(L_n,H^1(\mathbb{Z}_p,\mathbb{Z}_p^d)\bigr)
			\oplus H^4\bigl(L_n,H^0(\mathbb{Z}_p,\mathbb{Z}_p^d)\bigr)\\
		&\cong0\oplus0\oplus\mathbb{Q}_p/\mathbb{Z}_p\\
		&\cong\mathbb{Q}_p/\mathbb{Z}_p,\\[4pt]
	H^5(K_n,\mathbb{Z}_p^d)
		&\cong H^3\bigl(L_n,H^2(\mathbb{Z}_p,\mathbb{Z}_p^d)\bigr)
			\oplus H^4\bigl(L_n,H^1(\mathbb{Z}_p,\mathbb{Z}_p^d)\bigr)
			\oplus H^5\bigl(L_n,H^0(\mathbb{Z}_p,\mathbb{Z}_p^d)\bigr)\\
		&\cong\mathbb{Q}_p/\mathbb{Z}_p\oplus0\oplus0\\
		&\cong\mathbb{Q}_p/\mathbb{Z}_p.
\end{align*}
Consequently,
\[
H^i(K_n,\mathbb{Z}_p^d)\cong
\begin{cases}
	\mathbb{Z}_p & (i=0),\\
	0 & (i=1),\\
	\mathbb{Q}_p/\mathbb{Z}_p\oplus F_n & (i=2),\\
	F_n & (i=3),\\
	\mathbb{Q}_p/\mathbb{Z}_p & (i=4,5),\\
	0 & (i\geqq 6).
\end{cases}
\]

For finite coefficients,
we use $H^i(\mathbb{Z}_p,\mathbb{Z}/p^m\mathbb{Z})=0$ for $i\geqq 2$.
This gives
\begin{align*}
	H^2(K_n,\mathbb{Z}/p^m\mathbb{Z})
		&\cong H^1
		\bigl(
			L_n,H^1(\mathbb{Z}_p,\mathbb{Z}/p^m\mathbb{Z})
		\bigr)
		\oplus H^2
		\bigl(
			L_n,H^0(\mathbb{Z}_p,\mathbb{Z}/p^m\mathbb{Z})
		\bigr)\\
	&\cong F_{n,m}\oplus F_{n,m}.
\end{align*}
Similarly,
since $H^i(\mathbb{Z}_p,\mathbb{Q}_p/\mathbb{Z}_p)=0$ for $i\geqq 2$,
\begin{align*}
	H^2(K_n,\mathbb{Q}_p/\mathbb{Z}_p)
		&\cong H^1
		\bigl(
			L_n,H^1(\mathbb{Z}_p,\mathbb{Q}_p/\mathbb{Z}_p)
		\bigr)
		\oplus H^2
		\bigl(
			L_n,H^0(\mathbb{Z}_p,\mathbb{Q}_p/\mathbb{Z}_p)
		\bigr).
\end{align*}
The first summand is isomorphic to $F_n$,
and the second is zero.
Thus
\[
H^2(K_n,\mathbb{Q}_p/\mathbb{Z}_p)\cong F_n.
\]
Finally,
Poincar\'e duality gives
\[
H^2(K_n,\mathbb{Z}_p^p)
\cong H^2(K_n,\mathbb{Q}_p/\mathbb{Z}_p)^\vee
\cong F_n^\vee\cong F_n.
\]
We have proved the following proposition.

\begin{proposition}
	Suppose that $p>3$,
	$n\geqq 0$,
	and $m\geqq 1$.
	Then the cohomology groups of $K_n$ are as follows:
	\begin{enumerate}
		\item For discrete $\mathbb{Z}_p$-coefficients,
		\[
		H^i(K_n,\mathbb{Z}_p^d)\cong
		\begin{cases}
			\mathbb{Z}_p & (i=0),\\
			0 & (i=1),\\
			\mathbb{Q}_p/\mathbb{Z}_p\oplus F_n & (i=2),\\
			F_n & (i=3),\\
			\mathbb{Q}_p/\mathbb{Z}_p & (i=4,5),\\
			0 & (i\geqq 6).
		\end{cases}
		\]
		\item For profinite $\mathbb{Z}_p$-coefficients,
		\[
		H^i(K_n,\mathbb{Z}_p^p)\cong
		\begin{cases}
			\mathbb{Z}_p & (i=0,1),\\
			F_n & (i=2),\\
			\mathbb{Z}_p\oplus F_n & (i=3),\\
			\mathbb{Z}_p & (i=4),\\
			0 & (i\geqq 5).
		\end{cases}
		\]
		\item For coefficients in $\mathbb{Q}_p/\mathbb{Z}_p$,
		\[
		H^i(K_n,\mathbb{Q}_p/\mathbb{Z}_p)\cong
		\begin{cases}
			\mathbb{Q}_p/\mathbb{Z}_p & (i=0),\\
			\mathbb{Q}_p/\mathbb{Z}_p\oplus F_n & (i=1),\\
			F_n & (i=2),\\
			\mathbb{Q}_p/\mathbb{Z}_p & (i=3,4),\\
			0 & (i\geqq 5).
		\end{cases}
		\]
		\item For finite coefficients,
		\[
		H^i(K_n,\mathbb{Z}/p^m\mathbb{Z})\cong
		\begin{cases}
			\mathbb{Z}/p^m\mathbb{Z} & (i=0),\\
			\mathbb{Z}/p^m\mathbb{Z}\oplus F_{n,m} & (i=1),\\
			F_{n,m}\oplus F_{n,m} & (i=2),\\
			\mathbb{Z}/p^m\mathbb{Z}\oplus F_{n,m} & (i=3),\\
			\mathbb{Z}/p^m\mathbb{Z} & (i=4),\\
			0 & (i\geqq 5).
		\end{cases}
		\]
	\end{enumerate}
\end{proposition}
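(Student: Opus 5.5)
The proposition collects the computations carried out just above, so the plan is to assemble them in a fixed order: first the abelianization, then Poincar\'e duality, then the product decomposition $K_n\cong L_n\times\mathbb Z_p$.

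\textbf{Degrees $0$ and $1$.} Degree $0$ is immediate from the trivial action. For degree $1$, I use $H^1(K_n,-)=\operatorname{Hom}_{\mathrm{cts}}(K_n^{\mathrm{ab}},-)$ with
\[
K_n^{\mathrm{ab}}\cong\mathbb Z_p^\times\times(\mathbb Z/p^n\mathbb Z)^\times,
\qquad
\mathbb Z_p^\times\cong\mu_{p-1}\times\mathbb Z_p .
\]
Only the pro-$p$ part $\mathbb Z_p\times F_n$ sees $p$-primary coefficients. This gives $\mathbb Z_p$ for $\mathbb Z_p^p$ (finite factors die in a torsion-free target), $0$ for $\mathbb Z_p^d$ (compact image in a discrete torsion-free group), $\mathbb Q_p/\mathbb Z_p\oplus F_n$ for $\mathbb Q_p/\mathbb Z_p$, and $\mathbb Z/p^m\mathbb Z\oplus F_{n,m}$ for finite coefficients.

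\textbf{Degrees $3$, $4$ and $\geqq 5$ (all coefficients except $\mathbb Z_p^d$).} For $p>3$, $K_n$ is open in $\operatorname{GL}_2(\mathbb Z_p)$ and has no $p$-torsion, so it is a Poincar\'e group of dimension $4$. Its dualizing module is trivial, by the same adjoint-determinant argument used for $L_n$. Duality $H^i(K_n,M)\cong H^{4-i}(K_n,M^\vee)^\vee$ then gives:
\begin{itemize}
\item degree $4$ from degree $0$;
\item degree $3$ from the degree-$1$ groups, exchanging $\mathbb Z_p^p$ and $\mathbb Q_p/\mathbb Z_p$;
\item vanishing for $i\geqq5$.
\end{itemize}

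\textbf{Degree $2$ and all degrees for $\mathbb Z_p^d$.} For discrete coefficients I use the K\"unneth decomposition (Theorem~2.4.6 of \cite{cohomology-of-number-fields}):
\[
H^i(K_n,A)\cong\bigoplus_{r+s=i}H^r\bigl(L_n,H^s(\mathbb Z_p,A)\bigr).
\]
The factor $H^s(\mathbb Z_p,A)$ is as follows:
\begin{itemize}
\item for $A=\mathbb Z_p^d$: $\mathbb Z_p$, $0$, $\mathbb Q_p/\mathbb Z_p$ in degrees $0,1,2$, and zero above, via $0\to\mathbb Z_p\to\mathbb Q_p\to\mathbb Q_p/\mathbb Z_p\to0$ and $\operatorname{cd}_p=1$;
\item for torsion $A$: $A$ in degrees $0$ and $1$ only.
\end{itemize}
I then insert the previously computed tables for $H^*(L_n,\mathbb Z_p^d)$, $H^*(L_n,\mathbb Q_p/\mathbb Z_p)$ and $H^*(L_n,\mathbb Z/p^m\mathbb Z)$. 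This yields the $\mathbb Z_p^d$ list in all degrees, together with $H^2(K_n,\mathbb Z/p^m\mathbb Z)\cong F_{n,m}^2$ and $H^2(K_n,\mathbb Q_p/\mathbb Z_p)\cong F_n$. Finally, $H^2(K_n,\mathbb Z_p^p)\cong H^2(K_n,\mathbb Q_p/\mathbb Z_p)^\vee\cong F_n$ by duality.

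As a cross-check, the $\mathbb Z_p^d$ answers should agree with Proposition~\ref{proposition1-1}(2), i.e.\ $H^i(K_n,\mathbb Z_p^d)\cong H^i_{\mathrm{tor}}\oplus H^{i-1}\otimes\mathbb Q/\mathbb Z$ computed from the $\mathbb Z_p^p$ column. This applies since $\mathbb Z_p$-coefficient cohomology here is finitely generated, and the comparison works degree by degree.

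\textbf{Main obstacle.} The delicate points are two. First, the K\"unneth formula for the non-torsion discrete module $\mathbb Z_p^d$ needs justification; I would instead use the Hochschild--Serre spectral sequence for the split product, which degenerates because $\mathbb Z_p$ acts trivially. Second, I must verify that the $K_n$ dualizing module is trivial.
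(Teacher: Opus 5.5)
Your plan is correct and is essentially the paper's proof: degrees $0$ and $1$ come from $K_n^{\mathrm{ab}}\cong\mathbb{Z}_p^\times\times(\mathbb{Z}/p^n\mathbb{Z})^\times$, degrees $3$ and $4$ (and vanishing above $4$) from Poincar\'e duality for the $4$-dimensional group $K_n$ with trivial dualizing module, degree $2$ and the whole $\mathbb{Z}_p^d$ column from $H^i(K_n,A)\cong\bigoplus_{r+s=i}H^r(L_n,H^s(\mathbb{Z}_p,A))$ (which the paper simply quotes from \cite[Theorem~2.4.6]{cohomology-of-number-fields}), and $H^2(K_n,\mathbb{Z}_p^p)$ by duality from $H^2(K_n,\mathbb{Q}_p/\mathbb{Z}_p)$. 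Your Hochschild--Serre fallback for $\mathbb{Z}_p^d$ also works, but degeneration comes from the shape of the $E_2$-page rather than from the trivial action alone: with quotient $\mathbb{Z}_p$ only columns $0,1,2$ occur, column $2$ is concentrated in row $0$, and row $1$ vanishes since $H^1(L_n,\mathbb{Z}_p^d)=0$, so every differential is zero, and the only nontrivial extension (in degree $2$) splits because $\mathbb{Q}_p/\mathbb{Z}_p$ is divisible.
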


\subsection{The derived Hecke algebra}

The preceding computations determine the additive structure of $\mathscr{H}(G,K)_{\mathbb{Z}_p}$.
We reindex the set $A$ of double coset representatives as
\[
A=\left\{
x_{b,n}=\begin{pmatrix}
	p^{n+b} & 0\\
	0 & p^b
\end{pmatrix}
\;\middle|\;
b\in\mathbb{Z},\quad n\geqq 0
\right\}.
\]
Here,
for an abelian group $M$,
the notation $M[A]=\bigoplus_{x\in A}Mx$ denotes the direct sum indexed by $A$.

\begin{proposition}\label{proposition5}
	There are isomorphisms of abelian groups
	\[
	\mathscr{H}^i(G,K)_{\mathbb{Z}_p}\cong
	\begin{cases}
		\mathbb{Z}_p[A] & (i=0),\\
		0 & (i=1),\\
		\displaystyle\bigoplus_{\substack{b\in\mathbb{Z}\\n\geqq 0}}
		(\mathbb{Q}_p/\mathbb{Z}_p\oplus F_n)x_{b,n} & (i=2),\\
		\displaystyle\bigoplus_{\substack{b\in\mathbb{Z}\\n\geqq 0}}
		F_nx_{b,n} & (i=3),\\
		(\mathbb{Q}_p/\mathbb{Z}_p)[A] & (i=4,5),\\
		0 & (i\geqq 6).
	\end{cases}
	\]
\end{proposition}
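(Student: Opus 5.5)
The plan is to assemble the statement from the additive description of the derived Hecke algebra and the cohomology of the groups $K_n$ computed in the preceding proposition. First, recall that as graded abelian groups
\[
\mathscr{H}(G,K)_{\mathbb{Z}_p}\cong\bigoplus_{x\in K\backslash G/K}H^*(K_x,\mathbb{Z}_p^d).
\]
By the Cartan decomposition, the double cosets are indexed by $g_{a,b}$ with $a\geqq b$. Reindexing by $b\in\mathbb{Z}$ and $n=a-b\geqq 0$ identifies this set with $A=\{x_{b,n}\}$.

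Second, for $x=x_{b,n}$ we use the computation $K_{x_{b,n}}=K_n$. The stabilizer depends only on $n$ because conjugation by $g_{a,b}$ scales the $(1,2)$-entry by $p^{a-b}$ and the central factor $p^b$ acts trivially. Hence the summand at $x_{b,n}$ is $H^i(K_n,\mathbb{Z}_p^d)$.

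Third, we insert part (1) of the preceding proposition degree by degree:
\begin{itemize}
\item $H^0=\mathbb{Z}_p$ gives $\mathbb{Z}_p[A]$ (this also agrees with Proposition~\ref{proposition3}(1));
\item $H^1=0$ gives $\mathscr{H}^1=0$ (this also agrees with Proposition~\ref{proposition3}(2));
\item $H^2=\mathbb{Q}_p/\mathbb{Z}_p\oplus F_n$ gives the stated sum in degree $2$;
\item $H^3=F_n$ gives the stated sum in degree $3$;
\item $H^4=H^5=\mathbb{Q}_p/\mathbb{Z}_p$ gives $(\mathbb{Q}_p/\mathbb{Z}_p)[A]$ in degrees $4$ and $5$;
\item $H^i=0$ for $i\geqq 6$ gives $\mathscr{H}^i=0$ there.
\end{itemize}
Since all isomorphisms are taken summandwise in a direct sum, they assemble into isomorphisms of abelian groups.

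The only genuine content is the input from that proposition, namely the Künneth decomposition for $K_n\cong L_n\times\mathbb{Z}_p$ together with the discrete cohomology of $L_n$. That input was already established, so no step here should be an obstacle. The one point to check carefully is that the identification $K_{x_{b,n}}=K_n$ holds for the chosen representatives, so that no conjugation twist appears. Even if the representative changes $K_x$ within its conjugacy class, the cohomology is unchanged up to isomorphism, so this does not affect the result.
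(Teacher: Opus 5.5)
Your proposal is correct and follows the paper's own argument: the paper gives no separate proof beyond combining the decomposition $\mathscr{H}^i(G,K)_{\mathbb{Z}_p}\cong\bigoplus_{x\in A}H^i(K_x,\mathbb{Z}_p^d)$, the identification $K_{x_{b,n}}=K_n$ (which depends only on $n=a-b$), and part (1) of the preceding proposition on $H^*(K_n,\mathbb{Z}_p^d)$, inserted summand by summand. Your remark that changing the representative only conjugates $K_x$, and so does not change its cohomology up to isomorphism, is a harmless extra check that the paper leaves implicit.
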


We next consider $\overline{\mathscr{H}}(G,K)_{\mathbb{Z}_p}$.
For every $n\geqq 0$,
$m\geqq 1$,
and $i\geqq 0$,
the group $H^i(K_n,\mathbb{Z}/p^m\mathbb{Z})$ is finite.
For fixed $n$ and $i$, the inverse system
\[
\bigl(H^i(K_n,\mathbb{Z}/p^m\mathbb{Z})\bigr)_{m\geqq 1}
\]
therefore satisfies the Mittag--Leffler condition.
Consequently,
\[
H^i(K_n,\mathbb{Z}_p^p)
\cong\varprojlim_{m\geqq 1}H^i(K_n,\mathbb{Z}/p^m\mathbb{Z}).
\]
This gives the following description of
$\overline{\mathscr{H}}(G,K)_{\mathbb{Z}_p}$.

\begin{proposition}
	With $A$ as above, there are isomorphisms of abelian groups
	\[
	\overline{\mathscr{H}}^i(G,K)_{\mathbb{Z}_p}\cong
	\begin{cases}
		\mathbb{Z}_p[A] & (i=0,1),\\
		\displaystyle\bigoplus_{\substack{b\in\mathbb{Z}\\n\geqq 0}}
		F_nx_{b,n} & (i=2),\\
		\displaystyle\bigoplus_{\substack{b\in\mathbb{Z}\\n\geqq 0}}
		(\mathbb{Z}_p\oplus F_n)x_{b,n} & (i=3),\\
		\mathbb{Z}_p[A] & (i=4),\\
		0 & (i\geqq 5).
	\end{cases}
	\]
\end{proposition}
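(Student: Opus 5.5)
The plan is to compute $\overline{\mathscr{H}}^i(G,K)_{\mathbb{Z}_p}$ one double coset at a time and then sum. By definition,
\[
\overline{\mathscr{H}}^{\,i}(G,K)_{\mathbb{Z}_p}=\bigoplus_{x\in A}\varprojlim_{m}H^i(K_x,\mathbb{Z}/p^m\mathbb{Z}).
\]
For $x=x_{b,n}$ we have $K_x=K_n$, and this subgroup depends only on $n$, not on $b$. Each $H^i(K_n,\mathbb{Z}/p^m\mathbb{Z})$ is finite. So by the Mittag--Leffler remark just made, together with \cite[Theorem~2.7.5]{cohomology-of-number-fields}(2), the inverse limit identifies with $H^i(K_n,\mathbb{Z}_p^p)$.

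Next I substitute item (2) of the preceding proposition on the cohomology of $K_n$. Degrees $0$ and $1$ each give $\mathbb{Z}_p$. Degree $2$ gives $F_n$, degree $3$ gives $\mathbb{Z}_p\oplus F_n$, degree $4$ gives $\mathbb{Z}_p$, and degrees $i\geqq 5$ give $0$. Summing over $x_{b,n}\in A$ produces exactly the stated table:
\begin{itemize}
\item $\mathbb{Z}_p[A]$ in degrees $0$, $1$ and $4$;
\item $\bigoplus_{b,n}F_nx_{b,n}$ in degree $2$;
\item $\bigoplus_{b,n}(\mathbb{Z}_p\oplus F_n)x_{b,n}$ in degree $3$;
\item $0$ in degrees $i\geqq 5$.
\end{itemize}

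As a consistency check, I would compare with the direct limit computation $\varprojlim_m$ of item (4). The degree-$2$ groups $F_{n,m}\oplus F_{n,m}$ stabilise to $F_n\oplus F_n$ once $m\geqq n-1$. This disagrees with $F_n$ from (2), so the step needing care is the identification of $\varprojlim_m H^2(K_n,\mathbb{Z}/p^m\mathbb{Z})$.

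To settle this I would recompute $H^2(K_n,\mathbb{Z}_p^p)$ via the Künneth decomposition $K_n\cong L_n\times\mathbb{Z}_p$ for profinite coefficients:
\[
H^2(K_n,\mathbb{Z}_p^p)\cong H^2(L_n,\mathbb{Z}_p^p)\oplus H^1(L_n,\mathbb{Z}_p^p)\otimes H^1(\mathbb{Z}_p,\mathbb{Z}_p^p)\cong F_n\oplus 0.
\]
This agrees with (2), and it would suggest the finite-level groups in (4) carry extra Bockstein-type classes. These are exactly the ones that $\varprojlim^1$ considerations do not allow to survive. Concretely, one copy of $F_{n,m}$ comes from $H^1(L_n,\mathbb{Z}/p^m)\otimes H^1(\mathbb{Z}_p,\mathbb{Z}/p^m)$, and the transition maps act on it through the $H^1(\mathbb{Z}_p,-)$ factor.

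The main obstacle is this compatibility of transition maps in the inverse system. I would handle it by deriving the degree-$2$ answer from Theorem~2.7.5(2) applied directly to $H^2(K_n,\mathbb{Z}_p^p)$ as computed in (2), rather than from the stabilised finite groups. The other degrees involve no such ambiguity: the finite groups there are $\mathbb{Z}/p^m\oplus F_{n,m}$ with the evident reduction maps, and their limits are read off immediately.
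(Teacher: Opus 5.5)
Your argument is the paper's: finiteness of each $H^i(K_n,\mathbb{Z}/p^m\mathbb{Z})$ gives Mittag--Leffler, \cite[Theorem~2.7.5]{cohomology-of-number-fields} identifies $\varprojlim_m H^i(K_n,\mathbb{Z}/p^m\mathbb{Z})$ with $H^i(K_n,\mathbb{Z}_p^p)$, and item~(2) of the proposition on $H^*(K_n,-)$, summed over $x_{b,n}\in A$, gives the table. Your consistency-check remark misdiagnoses the discrepancy, although the proof does not depend on it: the extra summand $H^1(L_n,\mathbb{Z}/p^m\mathbb{Z})=\operatorname{Hom}(F_n,\mathbb{Z}/p^m\mathbb{Z})$ dies because reduction on it is eventually multiplication by $p$, so its limit is $\operatorname{Hom}(F_n,\mathbb{Z}_p)=0$ (${\varprojlim}^1$ plays no role since all groups are finite, and the $H^1(\mathbb{Z}_p,-)$ factor has surjective transitions), and the same summand sits inside $H^1(K_n,\mathbb{Z}/p^m\mathbb{Z})=\mathbb{Z}/p^m\mathbb{Z}\oplus F_{n,m}$, so degree $1$ is not ``read off immediately'' either---naively it would give $\mathbb{Z}_p\oplus F_n$ rather than $\mathbb{Z}_p$.
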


Finally,
for every $i\geqq 0$, the group $\widehat{\mathscr{H}}^i(G,K)_{\mathbb{Z}_p}$ is the $p$-adic completion of $\overline{\mathscr{H}}^i(G,K)_{\mathbb{Z}_p}$.


\begin{thebibliography}{99}
	\bibitem{stacks-project}
	The Stacks Project Authors. 
	\textit{Stacks Project}. 
	\url{https://stacks.math.columbia.edu/}, 2026.
	\bibitem{Venkatesh}
	A.~Venkatesh,
	\emph{Derived Hecke algebra and cohomology of arithmetic groups},
	Forum Math. Pi \textbf{7} (2019), e7, 119 pp.,
	doi:10.1017/fmp.2019.6.
	\bibitem{cohomology-of-number-fields}
	J\"{u}rgen Neukirch, Alexander Schmidt, and Kay Wingberg. 
	\textit{Cohomology of Number Fields}, Second Edition. 
	Grundlehren der mathematischen Wissenschaften, 323. Springer-Verlag, Berlin, 2008.
	\bibitem{koziol2024parahoricheckeextalgebrascharacteristic}
	K.~Kozio{\l}, R.~Ollivier, and J.~Stockton,
	\emph{Parahoric Hecke Ext-algebras in characteristic $p$},
	Trans. Amer. Math. Soc. \textbf{379} (2026), no.~9,
	doi:10.1090/tran/9663.
	\bibitem{boggi2016continuous}
	M.~Boggi and G.~Corob Cook,
	\emph{Continuous cohomology and homology of profinite groups},
	Documenta Mathematica \textbf{21} (2016), 1269--1312.
	\bibitem{venjakob2002structure}
	O.~Venjakob,
	\emph{On the structure theory of the Iwasawa algebra of a $p$-adic Lie group},
	Journal of the European Mathematical Society \textbf{4} (2002),
	no.~3, 271--311.
	\bibitem{beaudry2022dualizing}
	A.~Beaudry, P.~G.~Goerss, M.~J.~Hopkins, and V.~Stojanoska,
	\emph{Dualizing spheres for compact $p$-adic analytic groups
		and duality in chromatic homotopy},
	Invent. Math. \textbf{229} (2022), no.~3, 1301--1434.
	\bibitem{kohlhaase2017smooth}
	J.~Kohlhaase,
	\emph{Smooth duality in natural characteristic},
	Adv. Math. \textbf{317} (2017), 1--49.
\end{thebibliography}
\end{document}